\numberwithin{equation}{section}
\newtheorem{Theorem}{Theorem}[section]
\newtheorem{Proposition}[Theorem]{Proposition}
\newtheorem{Lemma}[Theorem]{Lemma}
\newtheorem{Corollary}[Theorem]{Corollary}
\theoremstyle{definition}
\newtheorem{Definition}{Definition}
\theoremstyle{remark}
\newtheorem{Remark}{Remark}
\def\dx{\,dx}
\def\dy{\,dy}
\def\dt{\,dt}
\def\ds{\,ds}
\DeclareMathOperator{\divergence}{div}
\DeclareMathOperator*{\osc}{osc}
\def\mint_#1{\mathchoice%
          {\mathop{\kern 0.2em\vrule width 0.6em height 0.69678ex depth -0.58065ex
                  \kern -0.8em \intop}\nolimits_{\kern -0.4em#1}}%
          {\mathop{\kern 0.1em\vrule width 0.5em height 0.69678ex depth -0.60387ex
                  \kern -0.6em \intop}\nolimits_{#1}}%
          {\mathop{\kern 0.1em\vrule width 0.5em height 0.69678ex
              depth -0.60387ex
                  \kern -0.6em \intop}\nolimits_{#1}}%
          {\mathop{\kern 0.1em\vrule width 0.5em height 0.69678ex depth -0.60387ex
                  \kern -0.6em \intop}\nolimits_{#1}}}
\newcommand{\aveint}[2]{\mathchoice%
          {\mathop{\kern 0.2em\vrule width 0.6em height 0.69678ex depth -0.58065ex
                  \kern -0.8em \intop}\nolimits_{\kern -0.45em#1}^{#2}}%
          {\mathop{\kern 0.1em\vrule width 0.5em height 0.69678ex depth -0.60387ex
                  \kern -0.6em \intop}\nolimits_{#1}^{#2}}%
          {\mathop{\kern 0.1em\vrule width 0.5em height 0.69678ex depth -0.60387ex
                  \kern -0.6em \intop}\nolimits_{#1}^{#2}}%
          {\mathop{\kern 0.1em\vrule width 0.5em height 0.69678ex depth -0.60387ex
                  \kern -0.6em \intop}\nolimits_{#1}^{#2}}}
\let\originalleft\left
\let\originalright\right
\renewcommand{\left}{\mathopen{}\mathclose\bgroup\originalleft}
\renewcommand{\right}{\aftergroup\egroup\originalright}
\newcommand{\loc}{\textnormal{loc}}
\newcommand{\dist}{\textnormal{dist}}
\newcommand{\R}{{\mathbb R}}
\newcommand{\N}{{\mathbb N}}
\newcommand{\A}{{\mathcal A}}
\newcommand{\eps}{\varepsilon}
\newcommand{\vs}{\vspace{3mm}}
\newcommand{\data}{\texttt{data}}
\begin{document}

\title[The Cauchy-Dirichlet problem for a class of parabolic equations]{The Cauchy-Dirichlet problem\\for a general class of parabolic equations}

\author{Paolo Baroni}
\address{Paolo Baroni\\ Dipartimento di Matematica e Applicazioni ``R. Caccioppoli'', Universit\`a degli Studi di Napoli ``Federico II''\\
I-80125 Napoli, Italy}
\email{paolo.baroni@unina.it}

\author{Casimir Lindfors}
\address{Department of Mathematics and Systems Analysis, Aalto University, P.O. Box 11000, 00076 Aalto, Finland}
\email{casimir.lindfors@aalto.fi}

\allowdisplaybreaks
\date{\today}

\begin{abstract}
We prove regularity results such as interior Lipschitz regularity and boundary continuity for the Cauchy-Dirichlet problem associated to a class of parabolic equations inspired by the evolutionary $p$-Laplacian, but extending it at a wide scale. We employ a regularization technique of viscosity-type that we find interesting in itself.
\end{abstract}

\maketitle

\section{Introduction}
The aim of this paper is the study of the behaviour of solutions to a wide class of nonlinear parabolic equations modeled after
\begin{equation}\label{eq.model}
 u_t-\divergence\Big(\frac{g(|Du|)}{|Du|}Du\Big)=0\qquad\text{in }\qquad\Omega_T:=\Omega\times(0,T)\subset \R^n\times\R,
\end{equation}
$n\geq2$, where $\Omega$ is a bounded domain with $C^{1,\beta}$ boundary and $g:\R_+\to\R_+$ is a $C^1$ function satisfying 
\begin{equation}\label{O.property}
 g_0-1\leq \mathcal O_g(s):=\frac{sg'(s)}{g(s)}\leq g_1-1\qquad \text{for  every $s>0$}
\end{equation}
with $1<g_0\leq g_1<\infty$.  Notice that we can assume $g_0<g_1$ without loss of generality. Indeed, if $\mathcal O_g(s)$ is constant, say $\mathcal O_g(s)=p-1$ for some $p>1$, a simple integration shows that $g(s)=s^{p-1}$ up to a constant factor, and therefore in this case \eqref{eq.model} gives back the evolutionary $p$-Laplacian widely studied in particular by DiBenedetto, see the monograph \cite{DiBenedetto}. This reveals that \eqref{eq.model} is a natural generalization of the $p$-Laplacian, and in effect this class of growth conditions was mathematically introduced exactly in these terms by Lieberman in \cite{Lieb}, even if this kind of condition appears earlier in the applications, see the forthcoming lines. 

We stress that quite a comprehensive study of non-negative solutions to the equation
\begin{equation}\label{kenig}
u_t-\divergence\big[ \varphi'(u)Du\big]=0 
\end{equation}
where the function $\varphi:[0,\infty)\to[0,\infty)$ satisfies
\begin{equation}\label{kenig.dahlberg}
0<a\leq \mathcal O_\varphi(s):=\frac{s\varphi'(s)}{\varphi(s)}\leq\frac1a, \quad\text{for $s>0$}; \qquad 1+a\leq \mathcal O_\varphi(s) \quad\text{for $s>s_0$} 
\end{equation}
for some $a\in(0,1)$ and some $s_0>0$ has been provided by Dahlberg and Kenig \cite{DK,DK2}; see also the books \cite{Dask,Vazquez}. Clearly, while \eqref{kenig} is a generalization of the porous medium equation that happens when $\varphi(u)=u^m$, $m>0$, in the same spirit \eqref{eq.model} can be seen as a generalization of the $p$-Laplacian.

\vs

As in \eqref{kenig.dahlberg}, we shall also consider a more stringent growth assumption for $g$ for large values of its argument. In addition to \eqref{O.property}, we shall assume that there exist constants $c_\ell,\epsilon>0$ such that 
\begin{equation}\label{grow.below.g}
 g(s)\geq c_\ell s^{\frac{n-2}{n+2}+\epsilon}\qquad \text{for any $s\geq 1$}.
\end{equation}
Note that in the $p$-Laplacian case \eqref{grow.below.g} reads precisely as $p>2n/(n+2)$, a completely natural assumption in the theory of the evolutionary $p$-Laplacian operator, see \cite{DiBenedetto, KMper, AM07}. Note moreover that \eqref{grow.below.g} is implied by assuming $g_0>2n/(n+2)$, see Paragraph \ref{prop.g}.

The regularity for the elliptic and variational counterpart of \eqref{eq.model} is quite well understood, see for instance \cite{Lieb, Bg, Cianchi-Mazya, Diening} for the first argument and \cite{DieningBianca, FS, Cianchi, Cianchi-Fusco} for the second, just to cite some indicative references. In the parabolic setting, however, very few results are available, and some of them only in particular cases: to our knowledge, only \cite{Hwang, HwangL, LiebPara1, LiebRussia}, all by Lieberman and Hwang, and the recent \cite{BKa}. 

The difficulty, in particular in finding zero-order results, stems from several facts, the main one perhaps being that the equation has very different behaviour, already in the $p$-Laplacian case, in the degenerate ($p\geq 2$) and singular ($p<2$) cases. In the degenerate case phenomena such as expansion of positivity occur, see \cite{DiBeGianVesp08, Kuu08}, and the diffusion dominates \cite{DiBenedetto.holder}. On the other hand, in the singular case the evolutionary character dominates \cite{Chen-DiB2} and extinction of positive solutions in finite time could happen, see \cite{DiBenedetto}. In our general setting the degenerate case occurs when $s\mapsto g(s)/s$ is increasing, and when it is decreasing we have the singular case. However, it might also happen that $s\mapsto g(s)/s$ has no monotonicity whatsoever, making the handling of the equation all the more difficult. The comprehension of the interaction of these different phenomena is the key for a better understanding of the behaviour of local solutions to \eqref{eq.model}, and in this paper we hope to start to clarify this difficult point, which will be the object of future investigations.

\vspace{3mm}

The class of differential operators we study, besides being quite a general extension of a well-known operator, finds important applications in the applied sciences, also in view of the following observation. Take the convex primitive $G$ of $g$ and consider the general minimization problem
\begin{equation}\label{functional}
u\in u_0+W^{1,1}_0(\Omega)\mapsto\int_\Omega G(|Du|)\dx; 
\end{equation}
it is often convenient to have energies with a precise dependence on $|Du|$ of more general type than monomial (that is, the case of the $p$-Dirichlet energy or appropriate extensions). For instance, in mechanics, fluid dynamics and magnetism, as first approximation it is customary to have dependencies of the energy on the modulus of the gradient of monomial type but with exponent depending on the size of $|Du|$, in order to have mathematical models fitting the experimental data. In this case $g$ is given by the gluing of different monomials (see the example in Paragraph \ref{conc.ex}). At this point, elliptic and parabolic equations having the growth described in \eqref{eq.model} arise naturally as Euler equations or flows of the functional in \eqref{functional}. In \cite{Shiffman}, for instance, the two-dimensional stationary, irrotational subsonic flow of a compressible fluid is described using an energy defined in the following way:
\begin{equation}\label{a.shiffman}
G(s)=-\Big(1-\frac{\gamma-1}2s^2\Big)^{\frac\gamma{\gamma-1}} \quad\text{for small $s$,}\qquad G(s)=\text{quadratic \quad otherwise}, 
\end{equation}
where $\gamma\in(1,2)$ is the exponent in the law $p\approx \rho^\gamma$ characteristic of polytropic gases.

More in general, see \cite{Bers, Dong, Acta}, one is lead to consider quasilinear static equations in dimension two and three of the type
\[
\divergence\big[\rho(|Du|^2)Du\big]=0,
\]
with $Du$ representing the {\em velocity field of the flow} and $q=|Du|$ being the {\em speed of the flow}. In this context one introduces the {\em Mach number}
\[
M^2\equiv [M(q)]^2:=-\frac{2 q^2}{\rho(q^2)}\rho'(q^2)
\]
(note that we must have $\rho'<0$). In our context, where $g(s)=\rho(s^2)s$, we compute $\mathcal O_g(s)=1-M(s)^2$. The general theory asserts that a point is {\em elliptic}
if $M<1$ and in this case the flow is {\em subsonic}, while if $M>1$ the point is {\em hyperbolic} and the flow there is {\em supersonic}. If $M=1$ the flow is called {\em sonic}.
A solution of the boundary value problem is called a subsonic (supersonic) flow according to whether all points are subsonic (supersonic); note that {\em mixed, or transonic flows} can exists, with obvious meaning. However, if for some reason we know that the flow maintains a controlled, small speed $q$, then the problem falls in the class of operators we consider; the approximation in \eqref{a.shiffman} is a way to study flows in the subsonic regime.

\vspace{3mm}

The object of our study will be the Cauchy-Dirichlet problem
\begin{equation}\label{general.equation}
\begin{cases}
u_t-\divergence \A(Du) =0\qquad&\text{in }\Omega_T, \\[3mm]
u=\psi &\text{on $\partial_p\Omega_T$,}
\end{cases}
\end{equation}
where $\A:\R^n\to\R^n$ is a $C^1$ vector field modeled after the one appearing in \eqref{eq.model}. In particular, we assume it satisfies the following ellipticity and growth conditions:
\begin{equation}\label{assumptionsO}
\begin{cases}
\displaystyle{\langle D\A(\xi)\lambda, \lambda\rangle \geq \nu \frac{g(|\xi|)}{|\xi|}|\lambda|^2}\\[3mm]
\displaystyle{|D\A(\xi)| \leq L\,\frac{g(|\xi|)}{|\xi|}}\\[2mm]
\end{cases},
\end{equation}
for any $\xi\in\R^n\setminus\{0\}, \lambda\in\R^n$ and with structural constants $0<\nu\leq1\leq L$; we assume without loss of generality that $\A(0)=0$. The function $g$ is a $C^1$ function as in \eqref{eq.model}, satisfying only \eqref{O.property} and \eqref{grow.below.g}. For what concerns $\psi$, we assume it to be continuous in $\partial_p\Omega_T$ with modulus of continuity $\omega_\psi$ with respect to the natural distance $\dist_{{\rm par},G}$, that is, there exists a continuous, concave function $\omega_\psi:\R_+\to\R_+$ with $\omega_\psi(0)=0$ such that
\[
|\psi(x,t)-\psi(y,s)|\leq \omega_\psi\big(\max\{|x-y|,[G^{-1}(1/|t-s|)]^{-1}\}\big)
\]
for every $(x,t),(y,s)\in\partial_p\Omega_T$. As already mentioned, $\Omega$ is a bounded domain of $\R^n$, $n\geq2$, whose boundary is of class $C^{1,\beta}$ for some $\beta\in(0,1)$; we shall provide some more details at the beginning of Section \ref{second}.

\vs

In this setting, we state the main result of our paper, which concerns at the same time the existence and regularity of a (unique) solution to \eqref{general.equation}.

 \begin{Theorem}\label{Ex}
There exists a unique solution $u$, in the sense of Definition \ref{solution.CD}, to the Cauchy-Dirichlet problem \eqref{general.equation}, where the vector field $\A$ satisfies the assumptions \eqref{assumptionsO}, with $g\in C^1(\R_+)$ satisfying \eqref{O.property} and \eqref{grow.below.g}. In particular, $u$ is continuous up to the boundary and moreover if the boundary datum $\psi$ is H\"older continuous with respect to the natural metric $\dist_{{\rm par},G}$ defined in \eqref{dist.par.G}, then so is $u$.
 \end{Theorem}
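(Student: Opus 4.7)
The plan is to obtain the solution as a limit of classical solutions to regularized Cauchy-Dirichlet problems. I would first approximate the boundary datum by a sequence $\psi_\varepsilon\in C^\infty(\overline{\Omega_T})$ converging uniformly on $\partial_p\Omega_T$ to $\psi$, keeping track of the modulus $\omega_\psi$ (and, when $\psi$ is H\"older, preserving the H\"older constant quantitatively). In parallel, I would regularize the vector field so that the new operator has both the original $\A$-ellipticity and a small non-degenerate $\mu_\varepsilon$-Laplacian-type perturbation, of a viscosity-type nature as advertised in the abstract; the point is to obtain a new $C^1$ vector field $\A_\varepsilon$ whose associated PDE is uniformly parabolic and smooth, so that classical parabolic theory (e.g.\ via Galerkin or monotone-operator methods, together with Ladyzhenskaya--Solonnikov--Ural'tseva-type quasilinear existence) produces a classical solution $u_\varepsilon$ of
\[
(u_\varepsilon)_t-\divergence \A_\varepsilon(Du_\varepsilon)=0\quad\text{in }\Omega_T,\qquad u_\varepsilon=\psi_\varepsilon\text{ on }\partial_p\Omega_T.
\]

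Next, I would extract estimates uniform in $\varepsilon$. The comparison principle for the regularized problem gives a uniform $L^\infty$ bound in terms of $\|\psi\|_\infty$, and testing with $u_\varepsilon-\psi_\varepsilon$ yields a uniform energy bound. Interior Lipschitz regularity (one of the main results advertised in the abstract, proven independently for the $\varepsilon$-family with constants independent of $\varepsilon$) then provides equicontinuity on compact subsets of $\Omega_T$; together with the energy estimate this supplies strong compactness of $Du_\varepsilon$ in $L^q_\loc$ for any reasonable $q$, so that monotonicity of $\A$ identifies the limit as a solution of the original equation in the sense of Definition \ref{solution.CD}.

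The main obstacle is propagating continuity all the way to the parabolic boundary uniformly in $\varepsilon$. I would construct barriers adapted to the Orlicz structure of $g$: on the lateral boundary, exploiting the $C^{1,\beta}$ regularity of $\partial\Omega$, one can flatten locally and construct an explicit radial supersolution of the form $\Phi(r)$ where $\Phi$ is built so that $\divergence[g(|D\Phi|)D\Phi/|D\Phi|]$ has the correct sign; the correct ansatz generalizes the $p$-Laplacian barrier, using the function $G^{-1}$ in place of power functions and leveraging \eqref{grow.below.g} to ensure the barrier ``beats'' the time derivative. On the initial slice, a standard Perron-type comparison with the boundary datum suffices. Combining lateral and initial barriers with a comparison argument for the regularized equation produces a modulus of continuity at every boundary point that depends only on $\omega_\psi$, $g$, $\partial\Omega$ and the structural constants, hence is uniform in $\varepsilon$.

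With these uniform moduli of continuity in hand, Ascoli--Arzel\`a gives uniform convergence of a subsequence to a function $u\in C(\overline{\Omega_T})$, which inherits the boundary modulus $\omega_\psi$; combined with the previous compactness this yields existence of the solution continuous up to $\partial_p\Omega_T$. Uniqueness is a standard energy argument: subtracting two solutions, testing with their difference (after a Steklov or mollification-in-time procedure to handle the time derivative), and invoking the monotonicity encoded in \eqref{assumptionsO}$_1$ shows the difference vanishes. Finally, when $\omega_\psi$ is a H\"older modulus, the barriers above are themselves H\"older of an explicit exponent (determined by $g_0,g_1,\beta$ and the H\"older exponent of $\psi$), so the boundary modulus of $u$ is H\"older; matching it with the interior Lipschitz estimate through a standard interpolation across parabolic cylinders touching $\partial_p\Omega_T$ upgrades $u$ to globally H\"older continuous with respect to $\dist_{{\rm par},G}$.
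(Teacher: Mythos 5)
Your overall architecture matches the paper's: viscosity-type regularization of the vector field, uniform-in-$\eps$ $L^\infty$ bounds from the comparison principle, interior Lipschitz a priori estimates, a boundary barrier to get a uniform modulus of continuity up to $\partial_p\Omega_T$, Ascoli--Arzel\`a plus a monotonicity argument (showing $V_g(Du_\eps)$ is Cauchy in $L^2_{\rm loc}$) to identify the limit equation, and uniqueness via comparison. Two differences from the paper are worth flagging, one of which is a genuine misconception.

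First, a misattribution: you say the structural lower bound \eqref{grow.below.g} is used to make the barrier ``beat'' the time derivative. In the paper \eqref{grow.below.g} plays no role whatsoever in the barrier; it is crucial only in Proposition~\ref{uniform.gradient.bound}, where it controls the weight $\theta_\lambda^\eps$ when passing from intrinsic to standard cylinders in the De Giorgi iteration for $|Du_\eps|^2$. In the barrier construction the paper needs only $g_0>1$ (and the $C^{1,\beta}$ regularity of $\partial\Omega$). This matters: part of the point of the theorem is to isolate exactly which structural hypotheses drive which estimates, and your proposal would over-use \eqref{grow.below.g}.

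Second, the barrier itself. You propose a radial ansatz $\Phi(r)$ built from $G^{-1}$, mimicking the $p$-Laplacian barrier. The paper instead first rescales to a normalized geometry (using the intrinsic $G$-cylinders $Q_r^\omega$ from Paragraph~\ref{geometry}, after which $\overline G(1)=1$) and then employs the \emph{universal} non-radial barrier $v^+(x,t)=|x'|^2+M\sqrt{x_n}+(2t+1)_-$, whose constants $M,\delta$ depend only on $\data$ and not on $g$. The intrinsic rescaling is precisely what makes $G^{-1}$ disappear from the barrier and what makes the resulting modulus of continuity uniform in $\eps$; if you avoid the rescaling and try to put $G^{-1}$ into $\Phi$ directly, you must explain how $\Phi$ dominates the solution on the ``side walls'' $|x'|=1$ of the (flattened) boundary cylinder and on the initial slice simultaneously, which is what the $|x'|^2$ and $(2t+1)_-$ pieces do for free in the paper. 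As written, your radial ansatz is underspecified and not obviously adapted to a $C^{1,\beta}$ boundary beyond the flat case. A minor further difference: the paper does not regularize the boundary datum $\psi$; it is carried verbatim into the approximating problems \eqref{approx.eq}, which keeps the comparison with barriers clean and avoids tracking a second approximation error.
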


 The following theorem gives some properties together with quantitative estimates for the solution described in the previous statement.
 
 \begin{Theorem}\label{Lip}
Let $u$ be the solution to \eqref{general.equation} given by Theorem~\ref{Ex}. Then $u$ is locally Lipschitz continuous and the following estimate holds:
\begin{equation}\label{loc.lipschitz}
\|Du\|_{L^{\infty}(Q_R)}\leq c\biggl(\mint_{Q_{2R}}\big[G(|Du|)+1\big]\dx\dt\biggr)^{\max\left\{\frac12,\frac2{\epsilon(n+2)}\right\}}
\end{equation}
for every parabolic cylinder $Q_{2R}\Subset\Omega_T$. The constant $c$ depends on $n,g_0,g_1,\nu,L,\epsilon$ and $c_\ell$. Moreover, there exists a modulus of continuity $\omega_u:\R_+\mapsto\R_+$ depending on $n,g_0,g_1,\nu,L,\epsilon,c_\ell,\|\psi\|_{L^\infty},\omega_\psi,\partial\Omega$ such that
\begin{equation}\label{exp.continuity}
|u(x,t)-u(y,s)|\leq \omega_u\big(\max\big\{|x-y|,\big[G^{-1}\big(1/|t-s|\big)\big]^{-1}\big\}\big) 
\end{equation}
for every $(x,t),(y,s)\in {\overline{\Omega_T}}^p$. 
 \end{Theorem}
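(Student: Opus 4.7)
The overall strategy is to work with the regularized solutions produced in the proof of Theorem~\ref{Ex} (which, per the abstract, come from a viscosity-type regularization yielding smooth approximants $u_\eps$ to a family of non-degenerate equations of the same structure), establish \eqref{loc.lipschitz} and a uniform modulus of continuity \eqref{exp.continuity} with constants independent of $\eps$, and then pass to the limit using the compactness afforded by the uniform estimates. Throughout, all constants must depend only on the listed structural parameters.

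For the interior gradient bound, I would differentiate the regularized equation in a Cartesian direction $x_k$, test the resulting equation for $u_{x_k}$ with $u_{x_k}\,[g(|Du|)/|Du|]^\alpha\eta^2$ for a nonnegative cutoff $\eta$ supported in $Q_R$ and a parameter $\alpha\ge 0$, and sum over $k$. The ellipticity in \eqref{assumptionsO} together with \eqref{O.property} converts this into a Caccioppoli-type inequality of the schematic form
\[
\sup_{t}\int_{B_r}\Phi_\alpha(|Du|)\eta^2\dx+\int_{Q_r}\frac{g(|Du|)}{|Du|}\big|D\big[(G(|Du|))^{(\alpha+1)/2}\big]\big|^2\eta^2\dx\dt\le\frac{c}{(R-r)^2}\int_{Q_R}(G(|Du|)+1)^{\alpha+1}\dx\dt
\]
for a suitable primitive $\Phi_\alpha$, after absorbing lower-order terms using the bounds on $\mathcal O_g$. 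Coupling this with the parabolic Sobolev embedding on $Q_R$ produces a reverse-H\"older improvement $L^q\to L^{q(n+2)/n}$ for $G(|Du|)$, which can be iterated \`a la Moser along a geometric sequence of shrinking cylinders. The scaling in this iteration is governed by \eqref{grow.below.g}: in the ``degenerate'' regime, where $|Du|^2/G(|Du|)$ stays bounded, the iteration exponent collapses to $1/2$, whereas in the singular regime the lower bound \eqref{grow.below.g} is precisely what yields the exponent $2/[\epsilon(n+2)]$. Taking the maximum accounts for both possibilities, giving \eqref{loc.lipschitz}; lower semicontinuity of the right-hand side then preserves the estimate as $\eps\to 0$.

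Continuity up to the parabolic boundary, needed for \eqref{exp.continuity}, will be obtained by a barrier argument exploiting the $C^{1,\beta}$ regularity of $\partial\Omega$. After locally flattening the boundary, I would construct explicit super- and subsolutions of the regularized equation built from powers of the $\dist_{{\rm par},G}$-distance to $\partial_p\Omega_T$, adjusted on the parabolic boundary to dominate $\psi\pm\omega_\psi(r)$; comparing these with $u_\eps$ on a cylinder centered at a boundary point $(x_0,t_0)$ controls the oscillation of $u_\eps$ there in terms of $\omega_\psi$ and $\dist_{{\rm par},G}$. Combined with the interior oscillation decay encoded in \eqref{loc.lipschitz} via a standard covering/iteration argument, this yields a global modulus $\omega_u$ of the required form, stable under $\eps\to 0$.

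The main obstacle is that, since $s\mapsto g(s)/s$ may fail to be monotone, one cannot a priori classify the equation as degenerate or singular, and the classical DiBenedetto intrinsic cylinders based on $|Du|^{p-2}$ are unavailable. Both the Caccioppoli estimate and the barrier construction must therefore be performed directly in the intrinsic geometry induced by $G$ and $\dist_{{\rm par},G}$, and the bounds $g_0,g_1$ together with \eqref{grow.below.g} must be tracked carefully to ensure that every constant absorbed during Moser iteration remains uniform in the regularization parameter.
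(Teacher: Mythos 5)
Your sketch for the boundary modulus of continuity is essentially the paper's Section~\ref{boundary.continuity}: explicit barriers constructed near a flattened boundary, compared with $u_\eps$ via Proposition~\ref{comparisonprinciple}, rescaled and iterated to produce a uniform modulus with respect to $\dist_{{\rm par},G}$. That part is aligned. The interior gradient estimate, however, is where there is a genuine gap.

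You propose a Moser-type iteration, testing the differentiated equation with $u_{x_k}\,[g(|Du|)/|Du|]^\alpha\eta^2$ and producing a reverse-H\"older chain $L^q\to L^{q(n+2)/n}$ for $G(|Du|)$. The obstruction is exactly the one the paper singles out in Paragraph~\ref{conc.ex}: the weight $g(s)/s$ is \emph{not} a monotone function of $s$, and its logarithmic derivative $(g(s)/s)'\cdot s/(g(s)/s)=\mathcal O_g(s)-1$ can change sign (any time $g_0<2<g_1$), and can oscillate between values as small and as large as one wishes along a sequence. Consequently the proposed test function and the cross terms it produces (coming from $D\big([g(|Du|)/|Du|]^\alpha\big)$, and from the time-derivative of $|Du|^2[g(|Du|)/|Du|]^\alpha$, which is not a perfect time derivative) cannot be absorbed using \eqref{O.property} alone; there is no algebraic relation letting you trade $[g(|Du|)/|Du|]^{\alpha}$ against $(G(|Du|)+1)^{\alpha+1}$ uniformly in $s$. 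Your own last paragraph flags the difficulty, but the remedy you suggest (``work in the intrinsic geometry induced by $G$ and $\dist_{{\rm par},G}$'') is the geometry used for the boundary argument, not for the gradient bound, and does not by itself fix the iteration.

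What the paper actually does is different in two crucial respects. First, it shows (Lemma~\ref{vsub}) that $v=|Du|^2$ is a subsolution of the \emph{linear} equation $\partial_t v-\divergence(D\A(Du)Dv)=0$, so it can run a De Giorgi iteration on truncations $(v-k)_+$ with a clean test function $(v-k)_+\varphi^2$, avoiding the ill-behaved weight in the test function entirely. Second, it introduces the intrinsic cylinder $Q_\rho^\lambda$ from \eqref{cyl.int}, scaled by $\theta_\lambda=g(\lambda)/\lambda$ with $\lambda\geq\frac14\sup_{Q_\rho}|Du|$ (available for the regularized solutions), and restricts to levels $k\geq\lambda^2$. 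On $Q_\rho^\lambda\cap\{v\geq k\}$ one then has $|Du|\in[\lambda,4\lambda]$, so the weight is comparable to the constant $\theta_\lambda$ (the ``rehomogenization'' \eqref{rehomog}); this is what lets the Caccioppoli/Sobolev chain of Lemmas~\ref{caccioppoli2}--\ref{reverseholder2} close without any monotonicity of $g(s)/s$. The exponents $1/2$ and $2/[\epsilon(n+2)]$ then emerge not from a degenerate/singular dichotomy of $G$ but from the case distinction on whether $\theta_\lambda^\eps\gtrless 1$ and on which term of $g_\eps$ dominates, in Proposition~\ref{uniform.gradient.bound}. I would redo your interior part along these lines: prove that $|Du|^2$ is a subsolution, derive a Caccioppoli for truncations with the weight $g(|Du|)/|Du|$, introduce $Q_\rho^\lambda$ with $\lambda$ dominating $\sup|Du|$, rehomogenize, and iterate à la De Giorgi, then close the argument with Lemma~\ref{iteration} and the energy estimate on the fixed cylinder.
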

We refer the reader to Paragraph \ref{notation} for the definitions of the standard parabolic cylinders $Q_R(x_0,t_0)$ and of the parabolic closure of $\Omega_T$. We also mention that in the standard case of the evolutionary $p$-Laplacian our estimate \eqref{loc.lipschitz} gives  back exactly the gradient $\sup$-estimate available for degenerate and singular equations, see \cite[Chapter VIII, Theorems 5.1 \& 5.2]{DiBenedetto}.

\begin{Remark}\label{more.general.conditions}
Theorems \ref{Ex} and \ref{Lip} hold for a wider class of operators generalizing \eqref{eq.model}, which allow the presence of a function $g$ that is not $C^1$ but merely Lipschitz. Indeed, we may consider Lipschitz functions $g:\R_+\to\R_+$ satisfying \eqref{O.property} almost everywhere and vector fields $\A:\R^n\to\R^n$ in $W^{1,\infty}(\R^n)$ satisfying the monotonicity and Lipschitz assumptions
\begin{equation}\label{assumptionsO.weak}
\begin{cases}
\displaystyle{\langle \A(\xi_1)-\A(\xi_2),\xi_1-\xi_2\rangle \geq \nu \frac{g(|\xi_1|+|\xi_2|)}{|\xi_1|+|\xi_2|}|\xi_1-\xi_2|^2}\\[3mm]
\displaystyle{|\A(\xi_1)-\A(\xi_2)| \leq L\, \frac{g(|\xi_1|+|\xi_2|)}{|\xi_1|+|\xi_2|}|\xi_1-\xi_2|},
\end{cases},
\end{equation}
for every $\xi_1,\xi_2\in\R^n$ such that $|\xi_1|+|\xi_2|\neq 0$ and for some $0<\nu\leq 1\leq L$. For a proof of this fact see the end of Section \ref{conclusion}.
\end{Remark}

\subsection{Novelties and technical tools}
We believe that the main interest of this paper, apart from the results of Theorems \ref{Ex} and \ref{Lip} themselves (that will be used for instance in \cite{CL}), is the development of some tools for the treatment of the difficult equation \eqref{general.equation} (see Paragraph \ref{conc.ex}). We prove the Lipschitz estimate as an {\em a priori estimate} for problems enjoying further regularity. Instead of using a regularization of the type used in \cite{Lieb, Hwang, LiebPara1}, the regularization we employ is of viscosity type, closer to that in \cite{AMS}: we consider a vector field of the type 
\[
\A_\eps(\xi):=(\phi_\eps\ast\A)(\xi)+\eps \big(1+|\xi|\big)^{p-2}\xi,\qquad\xi\in\R^n,\eps\in(0,1),
\]
where $p\gg1$ is a large exponent and $\{\phi_\eps\}$ a family of mollifiers. This allows us to overcome the difficulties of deriving regularity estimates for the approximant problems, which we were not able to find in the literature. At this point continuity up to the boundary becomes an essential ingredient in the proof of the convergence, as well as the fact that we are solving a Cauchy-Dirichlet problem and therefore have a uniform bound on $\|u_\eps\|_{L^\infty}$ given by the maximum principle. 

\vs

We use the a priori Lipschitz continuity (and the further regularity) of the approximating solutions in a way inspired by \cite{KMibe}. First, we employ the fact that the function $v=|Du|^2$ is a subsolution to a similar problem, see Lemma \ref{vsub}. Then, we define an appropriate intrinsic geometry (see \eqref{cyl.int}) depending on the growth of the approximating vector field $\A_\eps$, which allows us to rebalance estimates, in the sense that the weight appearing in the Caccioppoli estimate for the equation satisfied by $v$ turns out to be essentially constant, see \eqref{rehomog}. Here the fact that we can bound the supremum of $Du$, and thus of $v$, from above is essential. Finally, we conclude the proof using an argument based on an alternative in order to get rid of the possible dependence on $\eps$ in terms of the aforementioned geometry, depending in turn on the growth of $\A_\eps$.

\section{Preliminary material: notation, the function $g$, miscellanea}\label{second}

For what concerns $\partial\Omega$, we assume that there exists a radius $R_\Omega>0$ such that for every point $x_0\in\partial\Omega$ there is a unit vector $\hat e_{x_0}$ such that the restriction of $\partial \Omega$ is a graph of a $C^{1,\beta}$ function in $B_{R_\Omega}$ along the $\hat e_{x_0}$ direction, in the following sense: with $T$ being an orthogonal transformation that maps $\hat e_{x_0}$ into $(0,0,\dots,0,1)$, for every $0<r\leq R_\Omega$ it holds 
\[
T^{-1}(\partial \Omega-x_0)\cap \big(B_r'\times(-r,r)\big)={\rm graph}\,\theta
\]
(see below for the precise meaning of these symbols) with $\theta\equiv \theta_{x_0}\in C^{1,\beta}(B_r')$, $\theta(B_r')\subset(-r,r)$ and the $C^{1,\beta}$ norm of $\theta$ uniformly bounded:
\begin{equation*}
[\theta]_{C^{1,\beta}}\leq \Theta.
\end{equation*}
Note that without loss of generality, we can take $\hat e_{x_0}$ as the inner normal vector in $x_0$: $\{v:\langle v,\hat e_{x_0}\rangle=0\}$ is the tangent hyperplane to $\Omega$ in $x_0$; therefore $D\theta(0)=0$. $D\theta$ is the full gradient of $\theta$ with respect to its $n-1$ variables. Finally, by saying that a constant depends on $\partial\Omega$, we shall mean it depends on $\Theta$.

\subsection{Notation}\label{notation}
We denote by $c$ a general constant {\em always larger than or equal to one}, possibly varying from line to line; relevant dependencies on parameters will be emphasized using parentheses, i.e., ~$c_{1}\equiv c_1(n,p,q)$ means that $c_1$  depends on $n,p,q$. For the ease of notation, we shall also use the following abbreviation:
\[
\data:=\{n,g_0,g_1,\nu,L\}.
\]
We denote by $$ B_R(x_0):=\{x \in \R^n \, : \,  |x-x_0|< R\}$$ the open ball with center $x_0$ and radius $R>0$; when clear from the context or otherwise not important, we shall omit denoting the center as follows: $B_R \equiv B_R(x_0)$. The standard parabolic cylinder is defined as
\[
Q_R(x_0,t_0):=B_R(x_0)\times(t_0-R^2,t_0),
\]
while we define the natural cylinder as
\[
Q_R^G(x_0,t_0):=B_R(x_0)\times(t_0-[G(1/R)]^{-1},t_0).
\]
The latter is strictly linked to the scaling of the equation, see Paragraph \ref{geometry}. Unless otherwise explicitly stated, different balls and cylinders in the same context will have the same center. We shall denote, for a  factor $\alpha>0$, by $\alpha B_R$ the  ball $B_{\alpha R}$ and by $\alpha Q_R(x_0,t_0)$ the cylinder $B_{\alpha R}(x_0)\times(t_0-(\alpha R)^2,t_0)$; similarly for $\alpha Q_R^G(x_0,t_0)$. The parabolic boundary of a cylindrical domain $\mathcal K=\mathcal D\times \Gamma$, where $\mathcal D$ is an open domain and $\Gamma$ an open interval of the real line, is defined as
\[
\partial_p\mathcal K:=\big(\overline{\mathcal D}\times\inf \Gamma\big)\cup\big(\partial \mathcal D\times \Gamma\big).
\]
Naturally, the parabolic closure of $\mathcal K$ is then ${\overline{\mathcal K}}^p:=\mathcal K\cup\partial_p\mathcal K$. Accordingly with the customary use in the parabolic setting, when considering a sub-cylinder $\mathcal K$ (as above) compactly contained in $\Omega_T$, we shall mean that $\mathcal D\Subset\Omega$ and  $0<\inf \Gamma<\sup \Gamma\leq T$; we will write in this case $\mathcal K\Subset\Omega_T$. By $\partial \Omega-x_0$ we mean the set $\{x\in\R^n:x+x_0\in\partial\Omega\}$. The standard parabolic distance is
\[
\dist_{\rm par}\big((x,t),(y,s)\big):=\max\big\{|x-y|,\sqrt{|t-s|}\big\}
\]
for any $(x,t),(y,s)\in\R^{n+1}$, while a distance strictly related to the scaling properties of the differential operator is \begin{equation}\label{dist.par.G}
\dist_{{\rm par},G}\big((x,t),(y,s)\big):=\max\Big\{|x-y|,\Big[G^{-1}\Big(\frac1{|t-s|}\Big)\Big]^{-1}\Big\}.
\end{equation}
Note that $Q_R^G(x_0,t_0)=\{(x,t)\in\R^{n+1}\,:\,\dist_{{\rm par},G}((x,t),(x_0,t_0))<R, t<t_0\}$ and similarly for $Q_R(x_0,t_0)$. Accordingly we define the parabolic distance between sets as
\[
\dist_{\rm par}(A,B):=\inf_{\substack{(x,t)\in A\\(y,s)\in B}}\dist_{\rm par}\big((x,t),(y,s)\big)
\]
for $A,B\subset\R^{n+1}$; similarly for $\dist_{{\rm par},G}(A,B)$. 

At a certain point it will be useful to split $\R^n=\R^{n-1}\times\R$. We agree here that we shall write a point $x\in \R^n$ as $(x',x_n)\in \R^{n-1}\times\R$; moreover, with $B_r'(x_0')$ we shall denote the ball of $\R^{n-1}$ with radius $r$ and center $x_0'\in \R^{n-1}$.

With $\mathcal B \subset \R^\ell$ being a measurable set, $\chi_{\mathcal B}$ denotes its characteristic function. If furthermore $\mathcal B$ has positive and finite measure and $f: \mathcal B \to \R^{k}$ is a measurable map, we shall denote by  
\[
   (f)_{\mathcal B} \equiv \mint_{\mathcal B}  f(y) \,dy  := \frac{1}{|\mathcal B|}\int_{\mathcal B}  f(y) \,dy
\]
the integral average of $f$ over $\mathcal B$. If $\mathcal B$ is a cylinder, $\mathcal B:=K\times\Gamma\subset\R^{n+1}$, then we shall denote the slicewise average by
\[
 (f)_{K}(\tau) := \mint_{K}  f(y,\tau) \dy.
\]
for almost every $\tau\in \Gamma$. By $\sup$ we shall mean possibly the essential supremum, and similarly for $\inf$. We shall also as usual denote 
\[
\osc_{\mathcal B}f:=\sup_{\mathcal B}f-\inf_{\mathcal B}f,\qquad [f]_{C^{0,\gamma}(\mathcal B)} := \sup_{\overset{x,y \in \mathcal B}{x \neq y}} \, \frac{|f(x)-f(y)|}{|x-y|^\gamma}.
\]
$D_if:=\partial f/\partial x_i$, for $i\in\{1,\dots,n\}$, will stand for the partial derivative of $f$ in the $\hat e_i$ direction, and $D^2_{i,j}f$ will denote $\partial^2 f/\partial x_i\partial x_j$. Here $\hat e_i$ is the $i$-th element of the standard orthonormal basis of $\R^n$.
By $2^*$ we shall denote the Sobolev conjugate exponent of $2$, with the agreement that in the case $n=2$ we fix the value of $2^*$ as $4$, i.e.,
\begin{equation}\label{convention}
  2^*:=
\begin{cases}
   \displaystyle{\frac{2n}{n-2}} & n>2,\\[3mm] 
      4 & n=2.
\end{cases} 
\end{equation}
 With $s$ being a real number, we shall denote $s_+:=\max\{s, 0\}$ and $s_-:=\max\{-s, 0\}$. For a vector $\xi=(\xi_1,\dots,\xi_n)\in\R^n$, ${\rm diag}\,\xi$ denotes the diagonal matrix $(\xi_i\delta_{i,j})_{i,j=1}^n$. Finally, $\R_+:=[0,\infty)$, $\N$ is the set $\{1,2,\dots\}$ and $\N_0=\N\cup\{0\}$.

By ``equation structurally similar to \eqref{general.equation}$_1$'' we mean an equation of the type $\partial_tu-\divergence \widetilde{\A}(Du)=0$ with $\widetilde{\A}$ satisfying assumptions \eqref{assumptionsO} with $\nu,L$ and $g$ replaced by $\tilde\nu,\widetilde L$ and $\tilde g$. Both $\tilde\nu,\widetilde L$ will depend on $\data$, while $\tilde g$ will satisfy \eqref{O.property} and \eqref{grow.below.g} with $\widetilde g_0$, $\widetilde g_0$, $\widetilde{c_\ell}$ depending on $\data$ and $c_\ell$. 

\subsection{Properties of $\boldsymbol{g}$}\label{prop.g}
Without loss of generality we  assume that
\begin{equation}\label{normalization}
 \int_0^1 g(\rho)\,d\rho = 1.
\end{equation}
Since \eqref{O.property} implies that the map $r \mapsto g(r)r^{-(g_0-1)}$ is increasing, while $r \mapsto g(r)r^{-(g_1-1)}$ turns out to be decreasing, we have
\[
 \min\left\{\alpha^{g_0-1},\alpha^{g_1-1}\right\}g(r)\leq g(\alpha r)\leq \max\left\{\alpha^{g_0-1},\alpha^{g_1-1}\right\}g(r)
\]
for every $r,\alpha>0$; clearly $g(0)=0$ and $\lim_{r\to\infty}g(r)=\infty$. Since moreover $g$ is strictly increasing, it has a strictly increasing inverse function $g^{-1}\in C^1(\R_+)$ with
\[
 \left(g^{-1}\right)'(r) = \frac{1}{g'(g^{-1}(r))}\qquad\text{for every $r>0$. }
\]
Using \eqref{O.property} we then see that also $g^{-1}$ satisfies an Orlicz-type condition 
\begin{equation}\label{O.property2}
 \frac{1}{g_1-1} \leq \frac{(rg^{-1})'(r)}{g^{-1}(r)} \leq \frac{1}{g_0-1}\qquad\text{for every $r>0$. }
\end{equation}
Therefore, anything derived from \eqref{O.property} for $g$ holds for $g^{-1}$ with $g_0-1$ and $g_1-1$ replaced by 
${1}/(g_1-1)$ and $1/(g_0-1)$, respectively.  

Define the function $G: \R_+\to\R_+$ as 
\begin{equation}\label{G}
 G(r) := \int_0^r g(\rho)\,d\rho.
\end{equation}
Clearly $G'(r) = g(r) > 0$ and $G''(r) = g'(r) > 0$ implying that $G$ is both strictly increasing and strictly convex in $(0,\infty)$. Moreover, $G(0) = 0$ and $G(1) = 1$ due to \eqref{normalization}. We also define $1/G(1/s)=1/G^{-1}(1/s)=0$ for $s=0$. It is simple to check by integrating the function $r\mapsto rg(r)$ by parts and using \eqref{O.property} that also
\begin{equation}\label{O.property3}
 g_0 \leq \frac{G'(r)r}{G(r)} \leq g_1
\end{equation}
holds true for $r>0$.  

Define the Young complement of $G$ as 
\begin{equation}\label{conjugateG}
\widetilde G(r) = \sup_{s> 0}\,(rs - G(s))\qquad \text{or}\qquad \widetilde G(r) := \int_0^r g^{-1}(\rho)\,d\rho;
\end{equation}
in our setting these definitions are equivalent, see \cite{Rao}. Note that the Young's inequality 
\begin{equation}\label{Young.easy}
 sr\leq G(s) + \widetilde G(r) 
\end{equation}
holds true for every $r,s> 0$ and by \eqref{O.property2} and the second definition in \eqref{conjugateG} also $\widetilde G$ satisfies an Orlicz-type condition 
\begin{equation}\label{O.property4}
 \frac{g_1}{g_1-1} \leq \frac{\widetilde G'(r)r}{\widetilde G(r)} \leq \frac{g_0}{g_0-1}.
\end{equation}
Now starting from \eqref{O.property3} and \eqref{O.property4}, we deduce precisely as for $g$ the inequalities 
\begin{equation}\label{GDelta2}
 \min\left\{\alpha^{g_0},\alpha^{g_1}\right\}G(r)\leq G(\alpha r)\leq \max\left\{\alpha^{g_0},\alpha^{g_1}\right\}G(r),
\end{equation}
and 
\[
 \min\left\{\alpha^{\frac{g_1}{g_1-1}},\alpha^{\frac{g_0}{g_0-1}}\right\}\widetilde G(r)\leq \widetilde G(\alpha r)
 \leq \max\left\{\alpha^{\frac{g_1}{g_1-1}},\alpha^{\frac{g_0}{g_0-1}}\right\}\widetilde G(r)
\]
for every $\alpha,r\geq 0$. These, together with Young's inequality \eqref{Young.easy}, imply for $0<\varepsilon<1$ 
\[
 sr  \leq G(\varepsilon^{\frac{1}{g_0}}s) 
 + \widetilde G(\varepsilon^{-\frac{1}{g_0}}r) \leq \varepsilon G(s) + c(g_0,\varepsilon)\widetilde G(r).
\]
Another useful property is
\begin{equation*}
 \widetilde G\Big(\frac{G(r)}{r}\Big)  \leq G(r)\qquad\text{for every $r>0$,}
\end{equation*}
see again \cite{Rao} for the easy proof.

From the second assumption of \eqref{assumptionsO} we easily derive an upper bound for $\A$. Indeed, when $\xi\in\R^n\setminus\{0\}$ we 
have 
\begin{multline}\label{A.bound}
 |\A(\xi)|\leq |\xi|\int_0^1|D\A(s\xi)|\,ds\leq L|\xi|\int_0^1\frac{g(s|\xi|)}{s|\xi|}\,ds\\\leq c({L},{g_0})\int_0^{|\xi|}g'(r)\,dr \leq c(g_0,g_1,L)\frac{G(|\xi|)}{|\xi|};
\end{multline}
this holds also for $\xi=0$ by our conventions, since $\A(0)=0$. Similarly, the first assumption of \eqref{assumptionsO} yields 
\begin{equation}\label{A.ellipticity}
\left\langle\A(\xi),\xi\right\rangle = \int_0^1\left\langle D\A(s\xi)\xi,\xi\right\rangle\ds\geq c(g_1,\nu)|\xi|\int_0^{|\xi|}g'(r)\,dr\geq c(g_0,g_1,\nu)G(|\xi|).
\end{equation}
We define the quantity $V_g:\R^n\to\R^n$ by 
\[
 V_g(\xi) = \left(\frac{g(|\xi|)}{|\xi|}\right)^{\frac{1}{2}}\xi
\]
when $\xi\neq 0$ and set $V_g(0)=0$. Clearly $V_g$ is a continuous bijection of $\R^n$ and, moreover, has a continuous inverse by the inverse function theorem. Furthermore, the following monotonicity formula holds true: 
\begin{equation}\label{monotonicity}
  \langle \A(\xi_1)-\A(\xi_2),\xi_1-\xi_2 \rangle\geq c\frac{g(|\xi_1|+|\xi_2|)}{|\xi_1|+|\xi_2|}|\xi_1-\xi_2|^2\geq c\,|V_g(\xi_1)-V_g(\xi_2)|^2 
\end{equation}
for a constant $c\equiv c(g_0,g_1,\nu)$ and for every $\xi_1,\xi_2\in\R^n$, see \cite{Diening, DieningBianca}.


\subsection{A concrete example}\label{conc.ex}

We give here a nontrivial example of a Lipschitz function $g$ satisfying our assumptions - see Remark \ref{more.general.conditions}. This example is inspired by \cite{Lieb}. In particular we want to demonstrate the possibility that $g$ oscillates between degenerate and singular behaviour. Suppose $2n/(n+2)<g_0<g_1$ and set $\delta=(g_1-g_0)/3>0$. Define the sequence $s_k=2^{2^k}$ for $k\in\N_0$ and the function 
\[
 g(s)=
 \begin{cases}
   s^{g_0-1+\delta}, & 0<s<2\\[1mm]
   s_{2k+1}^{-\delta}s^{g_1-1}, & s_{2k}\leq s < s_{2k+1}\\[1mm]
   s_{2k+2}^{\delta}s^{g_0-1}, & s_{2k+1}\leq s < s_{2k+2}
\end{cases}.
\]
Clearly $g$ is Lipschitz and it satisfies \eqref{O.property}. Moreover, \eqref{normalization} holds after scaling by a suitable normalization constant. We observe that 
\begin{align*}
 \limsup_{s\to\infty}\frac{g(s)}{s}&=
 \begin{cases}
   \infty, & g_1>2+\delta\qquad\text{(iff \quad $g_0+2g_1>6$)}\\
   1, & g_1=2+\delta\qquad\text{(iff \quad $g_0+2g_1=6$)}\\
   0, & g_1<2+\delta\qquad\text{(iff \quad $g_0+2g_1<6$)}
\end{cases},\\\\
\liminf_{s\to\infty}\frac{g(s)}{s}&=
 \begin{cases}
   \infty, & g_0>2-\delta\qquad\text{(iff \quad $2g_0+g_1>6$)}\\
   1, & g_0=2-\delta\qquad\text{(iff \quad $2g_0+g_1=6$)}\\
   0, & g_0<2-\delta\qquad\text{(iff \quad $2g_0+g_1<6$)}\\
\end{cases}.
\end{align*}
By taking $g_0=2-\frac{3}{2n}, g_1=2+\frac{3}{2n}$ we obtain a particularly interesting case, that is, we have $\liminf_{s\to\infty}g(s)/s=0$ but $\limsup_{s\to\infty}g(s)/s=\infty$. Furthermore, if we consider the function 
\[
 \widetilde g(s)=\frac{1}{g(1/s)},
\]
we find similar behaviour as $s\to 0$. This is to say, we can build a structure function $g$ (and accordingly a vector field $\A$ as in \eqref{eq.model})  that, for $\ell\in\N$, along the sequence $\{\ell^{-k}\}_{k\in\N_0}$ the function $g(s)/s$ is at the same time as large and as close to zero as we wish, and therefore it does not enjoy any monotonicity properties. This gives a clue about the difficulty of the application of De Giorgi-type methods, in particular when they have to be matched with intrinsic geometries: note that the expressions of the type $G(s)/s^2\approx g(s)/s$ appear already in the energy estimate for \eqref{eq.model}, see Lemma \ref{caccioppoli}. On the other hand, when the quantity $g(|Du|)/|Du|$ is known to be under control, then the equation becomes treatable, see for instance Proposition \ref{reverseholder2} and in particular \eqref{rehomog}.

\subsection{Orlicz spaces} For $G$ as in \eqref{G}, a measurable function $u:A\to\R$, $A\subset\R^k$, $k\in\N$ belongs to the Orlicz space $L^G(A)$ if it satisfies 
\[
 \int_{ A}G(|u|)\dx < \infty.
\]
The space $L^G(A)$ is a vector space, since $G$ satisfies the $\Delta_2$-condition \eqref{GDelta2}, and it can be shown to be a 
Banach space if endowed with the Luxemburg norm 
\[
\|u\|_{L^G(A)} := \inf\left\{\lambda>0:\int_AG\Big(\frac{|u|}{\lambda}\Big)\dx\leq 1\right\}.
\]
A function $u$ belongs to $L_{\textrm{loc}}^G(A)$, if $u\in L^G(A')$ for every $A'\Subset A$. If also the weak gradient 
of $u$ belongs to $L^G(A)$, we say that $u\in W^{1,G}(A)$. The corresponding space with zero boundary values, denoted $W^{1,G}_0(A)$, is the completion of $C^{\infty}_c(A)$ under the norm 
\[
\|u\|_{W^{1,G}(A)}:=\|u\|_{L^G(A)}+\|Du\|_{L^G(A)}. 
\]
We denote by $V^G(\Omega_T)$ the space of functions $u\in L^G(\Omega_T)\cap L^1(0,T;W^{1,1}(\Omega))$ for which also the weak spatial gradient $Du$ belongs to $L^G(\Omega_T)$. The space $V^G(\Omega_T)$ is also a Banach space with the norm 
\[
\|u\|_{V^G(\Omega_T)} := \|u\|_{L^G(\Omega_T)}+\|Du\|_{L^G(\Omega_T)}.
\]
Moreover, we denote by $V_0^G(\Omega_T)$ the space of functions $u\in V^G(\Omega_T)$ that belong to $W^{1,G}_0(\Omega)$ for almost every $t\in(0,T)$, while the localized version $V^G_\loc(\Omega_T)$ is defined, as above, in the customary way. We also shorten
\[
V^{2,G}(\Omega_T):=L^\infty\left(0,T;L^2(\Omega)\right)\cap V^G(\Omega_T)
\]
and similarly for the localized and the zero trace versions. We shall moreover denote $V^{2,p}(\Omega_T)$, for $p>1$, the space $V^{2,G}(\Omega_T)$ for the choice $G(s)=s^p$.
\vs

\subsection{The concept of solution and consequences}
We fix here the notions of solution employed in this paper.
\begin{Definition}\label{weak.solution}
 A function $u$ is a \emph{weak solution} to \eqref{general.equation}$_1$ in a cylindrical domain $\mathcal K\subset\R^{n+1}$, with the vector field $\A$ satisfying the assumptions \eqref{assumptionsO}, if $u\in V^{2,G}_{\rm loc}(\mathcal K)$ and it satisfies the weak formulation
\begin{equation}\label{weaksolution}
\int_{\mathcal K}\big[-u\partial_t\eta+\langle\A(Du), D\eta\rangle\big]\dx\dt = 0
\end{equation}
for every test function $\eta \in C_c^\infty(\mathcal K)$. If instead of equality we have the $\leq(\geq)$ sign for every nonnegative 
$\eta\in C_c^\infty(\mathcal K)$, we say that $u$ is a \emph{weak subsolution (supersolution)} in $\mathcal K$. 
\end{Definition}
\begin{Definition}\label{solution.CD}
 A function $u$ is a solution to the Cauchy-Dirichlet problem \eqref{general.equation} if $u\in C^0(\overline\Omega_T)$ is a weak solution to \eqref{general.equation}$_1$ in $\Omega_T$ and moreover $u=\psi$ pointwise on $\partial_p\Omega_T$. 
\end{Definition}

\vs

A very useful formulation, equivalent to \eqref{weaksolution}, is the one involving Steklov averages. Indeed, the mild regularity of a solution does not allow us to use it as a test function. Furthermore, it is sometimes useful to have a weak formulation allowing for test functions independent of time, or test functions possibly vanishing only on the parabolic boundary of a cylinder. Apart from mollification, the possible way to have such properties involve the so-called Steklov averaging regularization of a function: for $f:\mathcal K=\mathcal D\times(t_1,t_2)\to\R$ measurable and $0<|h|\ll1$ appropriate, it is defined as 
 \[
f_h(x,t) := \frac{1}{h}\int_{t-h}^tf(x,s)\,ds\qquad\text{for $(x,t) \in \mathcal D\times (t_1+h,t_2)$};
 \]
note that we employ the {\em backward regularization}. If $f\in L^q(\mathcal K)$ for some $q\geq1$, then $f_h\to f$ in $L^q(\mathcal D\times (t_1+\eps,t_2))$ for every $\eps>0$; the same holds in the $L^G$ spaces. Moreover, if $f\in C^0(t_1,t_2; L^q(\mathcal D))$ then $f_h(\cdot,\tau)\to f(\cdot,\tau)$ in $L^q(\mathcal D)$ for a.e. $\tau \in (t_1+\eps,t_2)$ and for every $\eps>0$.

At this point it is quite easy to infer the following slicewise formulation for weak solutions (see \cite{DiBenedetto}) using density arguments with respect to the spatial variable:
\begin{equation}\label{Steklov.formulation}
\int_{\mathcal D}\big[\partial_t u_h(\cdot,\tau)\eta +\big\langle[\A(Du)]_h(\cdot,\tau), D\eta\big\rangle\big]\dx= 0
\end{equation}
for every $\eta \in W^{1,G}_0(\mathcal D)$, almost every $\tau\in(t_1+h,t_2)$, and $h>0$ such that the functions are well defined. Similar results hold also for weak super- and subsolutions.

\begin{Proposition}\label{comparisonprinciple}
 \emph{(Comparison principle)} Let $\mathcal K := \mathcal D\times(t_1,t_2)\subset\Omega_T$ and let $u\in C^0({\overline {\mathcal K}}^p)$ be a weak subsolution to \eqref{general.equation}$_1$ and $v\in C^0({\overline {\mathcal K}}^p)$ a weak supersolution to \eqref{general.equation}$_1$ in $\mathcal K$. If $u\leq v$ on $\partial_p \mathcal K$, then $u\leq v$ in $\overline {\mathcal K}^p$.
\end{Proposition}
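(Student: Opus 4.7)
The plan is to test the difference of the sub/supersolution inequalities against a truncation of $(u-v)_+$ that vanishes on the lateral boundary, and use the monotonicity of $\A$ recalled in \eqref{monotonicity}.

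Set $w := u - v$. I would like to show that for every $\tau_0\in(t_1,t_2)$,
\[
\int_{\mathcal D} \tfrac{1}{2}\bigl((w-\eps)_+\bigr)^2(x,\tau_0)\dx \leq 0,
\]
for every $\eps>0$, from which $w\leq 0$ follows by letting $\eps\to 0$. The natural choice of test function is $\eta_\eps := (u-v-\eps)_+$. Because $u,v\in C^0(\overline{\mathcal K}^p)$ and $u\leq v$ on $\partial_p\mathcal K$, the function $w$ is continuous up to the lateral boundary and satisfies $w\leq 0$ there; hence $\eta_\eps(\cdot,\tau)$ has compact support in $\mathcal D$ for every $\tau$, so it is an admissible element of $W^{1,G}_0(\mathcal D)$ slicewise.

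Since $u,v$ are not regular enough to be tested directly, I would first pass through the Steklov formulation \eqref{Steklov.formulation}. Subtracting the inequalities for $u$ and $v$ and testing with $\eta_\eps^h := (u_h - v_h - \eps)_+ \in W^{1,G}_0(\mathcal D)$ (admissible for $h$ small, because continuity and $u\leq v$ on $\partial\mathcal D\times[t_1,t_2]$ guarantee $u_h-v_h\leq \eps/2$ on a neighbourhood of $\partial\mathcal D$, uniformly in $\tau\in[t_1+h,t_2]$), one gets
\[
\int_{\mathcal D} \partial_t(u_h-v_h)\,\eta_\eps^h\dx + \int_{\mathcal D}\bigl\langle[\A(Du)-\A(Dv)]_h, D\eta_\eps^h\bigr\rangle\dx \leq 0
\]
for a.e. $\tau\in(t_1+h,t_2)$. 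Integrating in time over $(t_1+h,\tau_0)$ and using the chain-rule identity
\[
\partial_t(u_h-v_h)\,\eta_\eps^h = \partial_t\Bigl[\tfrac{1}{2}\bigl((u_h-v_h-\eps)_+\bigr)^2\Bigr]
\]
turns the parabolic term into a boundary-in-time contribution.

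Then I would send $h\to 0$: by the properties of Steklov averages recalled after \eqref{Steklov.formulation} (convergence in $L^q$, in $L^G$, and slicewise in $L^q(\mathcal D)$ using continuity), the time-slice terms pass to $\frac{1}{2}\int_{\mathcal D}(w-\eps)_+^2(\cdot,\tau_0)\dx$ at the top and to a contribution at $t=t_1+h\to t_1$ that vanishes because $w\leq 0$ on $\overline{\mathcal D}\times\{t_1\}$ by continuity. The diffusion term converges to
\[
\int_{t_1}^{\tau_0}\!\!\int_{\{w>\eps\}} \bigl\langle \A(Du)-\A(Dv), Du-Dv\bigr\rangle\dx\dt,
\]
which is nonnegative by the monotonicity inequality \eqref{monotonicity}. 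Hence $\int_{\mathcal D}((w-\eps)_+)^2(\cdot,\tau_0)\dx\leq 0$; letting $\eps\to 0^+$ and $\tau_0$ range over $(t_1,t_2)$, the conclusion $u\leq v$ in $\overline{\mathcal K}^p$ follows (using continuity up to the boundary).

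The main technical obstacle is to rigorously certify that $\eta_\eps^h$ is admissible in $W^{1,G}_0(\mathcal D)$, which requires combining the boundary inequality, the continuity of $u,v$ up to $\partial_p\mathcal K$, and uniform convergence of the Steklov averages in a neighbourhood of $\partial\mathcal D$; once this is in place, the rest is a standard energy argument based on monotonicity. A secondary point is the justification of the chain rule for $\partial_t$ applied to Steklov averages of merely $V^{2,G}$-functions, but this is routine since $u_h-v_h$ is Lipschitz in $t$ with values in $L^2(\mathcal D)$.
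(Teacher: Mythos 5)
Your proposal is correct and follows essentially the same route as the paper's proof: subtract the Steklov-averaged inequalities for $u$ and $v$, test against $(u_h-v_h-\eps)_+$ (admissible slicewise by continuity and $u\le v$ on $\partial_p\mathcal K$), use the monotonicity inequality \eqref{monotonicity} for the diffusion term, and let $h\to0$, $\eps\to0$. The only cosmetic difference is that the paper multiplies the test function by a time cutoff $\varphi_\eps(t)=(t_2-\eps-t)_+$ and moves $\partial_t$ onto $\varphi_\eps$ by integration by parts, whereas you integrate the slicewise formulation directly up to a free time $\tau_0$ and handle the initial-time boundary term via continuity of $u,v$; both variants hinge on the same continuity argument and are otherwise interchangeable.
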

\begin{proof}
For $\eps>0$ fixed define $\varphi_\eps(t) := (t_2-\eps-t)_+$ and test \eqref{weaksolution} formally with 
\[
 \eta = (u_h-v_h-\eps)_+\varphi_\eps.
\]
Note that $\eta$ is compactly supported in $\mathcal K$ due to the continuity of $u$ and $v$ and the fact that $u\leq v$ on $\partial_pQ$. Subtracting the Steklov version of the variational inequality of $v$ from that of $u$ and integrating over $(t_1,t_2)$ yields 
\[
\int_{\mathcal K}\partial_t(u_h-v_h)\eta\dx\dt+\int_{\mathcal K}\langle[\A(Du)]_h-[\A(Dv)]_h, D\eta\rangle\dx\dt \leq 0.
\]
By the monotonicity of $\A$, Lemma~\ref{monotonicity}, we have 
\[
\begin{split}
\int_{\mathcal K}&\langle[\A(Du)]_h-[\A(Dv)]_h, D\eta\rangle\dx\dt\\
 &\to\int_{\mathcal K\cap\{u>v+\eps\}}\langle\A(Du)-\A(Dv), Du-Dv)\rangle\varphi_\eps\dx\dt\geq 0,
\end{split}
\]
and for the parabolic term we obtain using integration by parts 
\[
\begin{split}
\int_{\mathcal K}\partial_t(u_h-v_h)(u_h-v_h-\eps)_+\varphi_\eps\dx\dt &=-\frac12\int_{\mathcal K}(u_h-v_h-\eps)_+^2\partial_t\varphi_\eps\dx\dt\\
 &\to\frac12\int_{t_1}^{t_2-\eps}\int_{\mathcal D}(u-v-\eps)_+^2\dx\dt
\end{split}
\]
as $h\to0$. Combining these gives 
\[
 \int_{t_1}^{t_2-\eps}\int_{\mathcal D}(u-v-\eps)_+^2\dx\dt\leq 0,
\]
which implies $u\leq v+\eps$ almost everywhere in $\mathcal D\times(t_1,t_2-\eps)$. Since this holds for every $\eps>0$ and $u,v\in C(\overline Q)$, the result follows. 
\end{proof}

Observe that the uniqueness of a solution to the Cauchy-Dirichlet problem \eqref{general.equation} follows immediately from the previous result. Moreover, we have the following corollary. 

\begin{Corollary}\label{maximumprinciple}
 \emph{(Maximum principle)} Let $\mathcal K\subset\Omega_T$ and let $u\in C({\overline {\mathcal K}}^p)$ be a weak solution to \eqref{general.equation}$_1$ in $\mathcal K$. Then 
 \begin{equation*}
  \inf_{\partial_p\mathcal K}u\leq u\leq\sup_{\partial_p\mathcal K}u
 \end{equation*}
in $\overline {\mathcal K}^p$ and, moreover, 
 \begin{equation*}
  \sup_{\overline {\mathcal K}}|u|=\sup_{\partial_p\mathcal K}|u|.
 \end{equation*}
\end{Corollary}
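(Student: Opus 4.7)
The plan is to obtain both inequalities as an immediate consequence of the comparison principle (Proposition \ref{comparisonprinciple}), by comparing $u$ with appropriate constant barriers. The key observation is that every constant function $c\in\R$ is a weak solution to \eqref{general.equation}$_1$ in any cylindrical subdomain: indeed $\partial_t c = 0$ and $Dc = 0$, and since our normalization gives $\A(0)=0$, the weak formulation \eqref{weaksolution} is trivially satisfied by $c$. In particular, any constant is simultaneously a weak subsolution and a weak supersolution, and it is of course continuous on the parabolic closure.

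For the upper bound I set $M := \sup_{\partial_p\mathcal K} u$, which is finite because $u\in C^0(\overline{\mathcal K}^p)$ and $\partial_p\mathcal K$ is compact. Then $v\equiv M$ is a weak supersolution in $\mathcal K$, and $u\leq M=v$ on $\partial_p\mathcal K$ by the very definition of $M$; Proposition \ref{comparisonprinciple} then yields $u\leq M$ throughout $\overline{\mathcal K}^p$. The lower bound is obtained in exactly the same manner by comparing $u$ from below with the constant supersolution substituted by the constant $m := \inf_{\partial_p\mathcal K}u$ used here as a subsolution; comparison gives $u\geq m$ on $\overline{\mathcal K}^p$.

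To conclude, I combine the two one-sided bounds with the trivial estimate $\sup_{\partial_p\mathcal K}|u|\leq \sup_{\overline{\mathcal K}^p}|u|$ coming from $\partial_p\mathcal K\subset\overline{\mathcal K}^p$, and with the elementary identity $\sup |u|=\max\{\sup u,-\inf u\}$, to get
\[
\sup_{\overline{\mathcal K}^p}|u| = \max\Bigl\{\sup_{\overline{\mathcal K}^p}u,\; -\inf_{\overline{\mathcal K}^p}u\Bigr\} = \max\Bigl\{\sup_{\partial_p\mathcal K}u,\; -\inf_{\partial_p\mathcal K}u\Bigr\} = \sup_{\partial_p\mathcal K}|u|,
\]
which is the desired equality. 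There is essentially no technical obstacle: the only point requiring care is the verification that constants solve the equation in the weak sense, which is immediate from $\A(0)=0$ together with the convention that $\mathcal{A}$ has been extended continuously at the origin in the preceding subsection.
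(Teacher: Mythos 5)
Your proof is correct and matches the intended argument: the paper states the maximum principle as a corollary of Proposition~\ref{comparisonprinciple} without giving details, and comparing $u$ with the constant barriers $M=\sup_{\partial_p\mathcal K}u$ and $m=\inf_{\partial_p\mathcal K}u$ — which are solutions by the normalization $\A(0)=0$ — is precisely the expected route. The final deduction $\sup_{\overline{\mathcal K}^p}|u|=\max\{\sup_{\overline{\mathcal K}^p}u,\,-\inf_{\overline{\mathcal K}^p}u\}=\max\{\sup_{\partial_p\mathcal K}u,\,-\inf_{\partial_p\mathcal K}u\}=\sup_{\partial_p\mathcal K}|u|$ is also sound.
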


We recall the following standard energy inequality for local weak solutions. We give it in a more general form for future reference.
\begin{Lemma}[Caccioppoli's inequality]\label{caccioppoli}
Let ${\mathcal K}:=\mathcal D\times(t_1,t_2)\Subset\Omega_T$ and let $u$ be a weak solution to \eqref{general.equation} in ${\mathcal K}$. Then there exists a constant $c\equiv c(g_0,g_1,\nu,L)$ such that 
 \begin{multline*}
  \sup_{\tau\in(t_1,t_2)}\int_{\mathcal D}\big[(u-k)_\pm^2\varphi^{g_1}\big](\cdot,\tau)\dx + \int_{\mathcal K}G\big(|D(u-k)_\pm|\big)\varphi^{g_1}\dx\dt\\
 \leq \int_{\mathcal D}\big[(u-k)_\pm^2\varphi^{g_1}\big](\cdot,t_1)\,dx+ c\int_{\mathcal K}\Big[G\big(|D\varphi|(u-k)_\pm\big)+(u-k)_\pm^2\left|\partial_t\varphi\right|\Big]\dx\dt
 \end{multline*}
for any $k\in\R$ and for every $\varphi \in W^{1,\infty}({\mathcal K})$ vanishing in a neighborhood of $\partial \mathcal D\times(t_1,t_2)$ and with $0\leq\varphi\leq 1$. The same inequality but only with the ``$+$'' sign holds for weak subsolutions.
\end{Lemma}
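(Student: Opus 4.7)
The plan is to derive the estimate from the Steklov-averaged weak formulation \eqref{Steklov.formulation}, testing with $\eta = (u_h-k)_+ \varphi^{g_1}$ (we treat only the ``$+$''/subsolution case; the ``$-$''/supersolution case is symmetric). This test function is admissible since $\varphi$ vanishes near $\partial\mathcal D\times(t_1,t_2)$, so $\eta(\cdot,\tau)\in W^{1,\infty}_0(\mathcal D)\subset W^{1,G}_0(\mathcal D)$. I would then integrate the resulting identity over the time interval $(t_1,\tau)$ for an arbitrary $\tau\in(t_1,t_2)$.

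For the parabolic term I would use the standard chain-rule trick $\partial_t u_h \cdot (u_h-k)_+ = \tfrac12 \partial_t (u_h-k)_+^2$, then integrate by parts in time to obtain
\[
 \tfrac12\int_{\mathcal D}(u_h-k)_+^2\varphi^{g_1}(\cdot,\tau)\dx - \tfrac12\int_{\mathcal D}(u_h-k)_+^2\varphi^{g_1}(\cdot,t_1)\dx - \tfrac{g_1}{2}\int_{t_1}^{\tau}\!\!\int_{\mathcal D}(u_h-k)_+^2\varphi^{g_1-1}\partial_t\varphi\dx\dt,
\]
and for the elliptic term I would expand $D\eta=\chi_{\{u_h>k\}}Du_h\,\varphi^{g_1}+g_1(u_h-k)_+\varphi^{g_1-1}D\varphi$. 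Sending $h\to 0$ using $L^G$-convergence of Steklov averages yields an inequality (recall we have a subsolution, so we get $\leq$) involving $\int\langle \A(Du),Du\rangle\chi_{\{u>k\}}\varphi^{g_1}$ and $g_1\int(u-k)_+\varphi^{g_1-1}\langle \A(Du),D\varphi\rangle$.

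Next I would apply the ellipticity \eqref{A.ellipticity}, which bounds the first elliptic piece from below by $c\int G(|D(u-k)_+|)\varphi^{g_1}$, and the growth bound \eqref{A.bound}, which bounds the cross term by $c g_1\int (u-k)_+|D\varphi|\,\frac{G(|Du|)}{|Du|}\varphi^{g_1-1}$. The decisive step is a weighted Young-type inequality: writing the integrand as the product of $\varphi^{g_1-1}G(|Du|)/|Du|$ and $(u-k)_+|D\varphi|$, Young's inequality \eqref{Young.easy} and the $\Delta_2$-property of $\widetilde G$ give
\[
 \widetilde G\bigl(\varphi^{g_1-1}G(|Du|)/|Du|\bigr)\leq \varphi^{g_1}\,\widetilde G\bigl(G(|Du|)/|Du|\bigr)\leq \varphi^{g_1}G(|Du|),
\]
where the crucial inequality $\varphi^{(g_1-1)\cdot g_1/(g_1-1)}=\varphi^{g_1}$ pins down exactly why the exponent $g_1$ appears on the cutoff (and uses $\varphi\leq 1$ together with the fact that the Sobolev conjugate exponent $g_1/(g_1-1)$ is the smaller of the two candidates from \eqref{O.property4}). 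Combining with an $\eps$-version of Young allows the $G$-term with $\varphi^{g_1}$ to be absorbed into $c\int G(|D(u-k)_+|)\varphi^{g_1}$ on the LHS, leaving a remainder estimated by $c\int G((u-k)_+|D\varphi|)$.

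Finally, I would use $|\partial_t\varphi^{g_1}|\leq g_1|\partial_t\varphi|$ (again from $\varphi\leq 1$) on the time error, and then take the supremum over $\tau\in(t_1,t_2)$ on the left-hand side (the right-hand side being independent of $\tau$) to obtain the stated double bound. The main technical obstacle is precisely the weighted Orlicz Young inequality above: unlike in the $p$-Laplacian case, one cannot just pull powers of $\varphi$ out of $G$ freely, and the choice of the exponent $g_1$ in $\varphi^{g_1}$ is dictated exactly by the requirement that $\widetilde G(\varphi^{g_1-1}\,\cdot\,)$ factor out a clean $\varphi^{g_1}$, enabling the absorption argument.
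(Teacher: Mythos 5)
Your proposal is correct and matches the paper's proof essentially step for step: test with $w_h\varphi^{g_1}$, integrate by parts in time, split the elliptic term, use \eqref{A.ellipticity} for the good term and \eqref{A.bound} plus $\varepsilon$-Young for the cross term, with the decisive point being that $\widetilde G(\varphi^{g_1-1}\,\cdot\,)\leq\varphi^{g_1}\widetilde G(\,\cdot\,)$ (from the doubling property \eqref{O.property4} with $\varphi\leq 1$) together with $\widetilde G(G(r)/r)\leq G(r)$, which is exactly why the exponent $g_1$ is put on the cutoff. (Two tiny slips worth fixing: $g_1/(g_1-1)$ is the H\"older conjugate of $g_1$, not the Sobolev conjugate; and $\eta(\cdot,\tau)$ belongs to $W^{1,G}_0(\mathcal D)$ but not to $W^{1,\infty}_0(\mathcal D)$ since $u_h$ need not be bounded — the inclusion you wrote is unnecessary anyway.)
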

\begin{proof}
Fix $\varphi\in W^{1,\infty}({\mathcal K})$ as in the statement of the Lemma, call $w:=\pm(u-k)_\pm$ and choose $\eta = w_h\varphi^{g_1}$ as the test function in \eqref{Steklov.formulation}. Then we integrate over $(t_1,\tau)$ for $\tau\in(t_1,t_2)$ to obtain 
\begin{equation}\label{caccioppolisubsolution}
 \int_{\mathcal K}\partial_tu_h\,w_h\varphi^{g_1}\chi_{(t_1,\tau)}\dx\dt 
 + \int_{\mathcal K}\big\langle[\A(Du)]_h, D\left(w_h\varphi^{g_1}\right)\big\rangle\chi_{(t_1,\tau)}\dx\dt = 0.
\end{equation}
Integration by parts gives 
\begin{align}\label{parabolic}
 \int_{\mathcal K}\partial_tu_h\,w_h\varphi^{g_1}\chi_{(t_1,\tau)}\dx\dt 
  &=\frac{1}{2}\int_{t_1}^{\tau}\int_{\mathcal D}\partial_t(w_h^2)\varphi^{g_1}\dx\dt\\
  &=\left.\frac{1}{2}\int_{\mathcal D} w_h^2\varphi^{g_1}\,dx\right\rvert_{t=t_1}^{\tau}
  -\frac{1}{2}\int_{t_1}^{\tau}\int_{\mathcal D} w_h^2\partial_t\left(\varphi^{g_1}\right)\dx\dt\notag\\
    &\to\left.\frac{1}{2}\int_{\mathcal D} w^2\varphi^{g_1}\,dx\right\rvert_{t=t_1}^{\tau}
  -\frac{1}{2}\int_{t_1}^{\tau}\int_{\mathcal D} w^2\partial_t\left(\varphi^{g_1}\right)\dx\dt\notag
\end{align}
as $h\to0$. For the elliptic part we have by \eqref{A.ellipticity} 
\begin{align*}
\int_{\mathcal K}&\big\langle[\A(Du)]_h, D(w_h\varphi^{g_1})\big\rangle \chi_{(t_1,\tau)}\dx\dt\\
&\to\int_{t_1}^{\tau}\int_{\mathcal D}\big\langle\A(Du), Dw\big\rangle\,\varphi^{g_1}\dx\dt+g_1\int_{t_1}^{\tau}\int_{\mathcal D}\big\langle\A(Du), D\varphi\big\rangle\,w\,\varphi^{g_1-1}\dx\dt\notag\\
&\geq c_1\int_{t_1}^{\tau}\int_{\mathcal D} G(|Dw|)\varphi^{g_1}\dx\dt - \,\bigg|g_1\int_{\mathcal K}\big\langle\A(Du), D\varphi\big\rangle\,w\,\varphi^{g_1-1}\dx\dt\bigg|,\notag
\end{align*}
where $c_1$ depends on $g_0,g_1,\nu$. Furthermore, by \eqref{A.bound}, Young's inequality with $\varepsilon\in (0,1)$ to be chosen and the properties of $g$ we obtain 
\begin{align}\label{elliptic2}
 &\left|g_1\int_{\mathcal K}\big\langle\A(Du), D\varphi\big\rangle\,w\,\varphi^{g_1-1}\dx\dt\right|
 \leq g_1\int_{\mathcal K}|\A(Dw)||D\varphi||w|\varphi^{g_1-1}\dx\dt\notag\\
 &\qquad\leq \varepsilon c_2 \int_{\mathcal K}\widetilde G\left(\frac{G(|Dw|)}{|Dw|}\varphi^{g_1-1}\right)\dx\dt
 + c(\varepsilon)\int_{\mathcal K}G(|D\varphi||w|)\dx\dt\notag\\
 &\qquad\leq \varepsilon c_2\int_{\mathcal K}G(|Dw|)\varphi^{g_1}\dx\dt
 + c(\varepsilon)\int_{\mathcal K}G(|D\varphi||w|)\dx\dt,
\end{align}
where $c_2$ depends on $g_0,g_1,L$ and $c(\eps)$ depends on $g_0,g_1,L$ as well as on $\eps$. Now, combining \eqref{parabolic}-\eqref{elliptic2} with \eqref{caccioppolisubsolution} yields
\[
\begin{split}
\left.\frac{1}{2}\int_{\mathcal D} w^2\varphi^{g_1}\,dx\right\rvert_{t=t_1}^{\tau}
-\frac{1}{2}\int_{\mathcal K}w^2\partial_t\left(\varphi^{g_1}\right)\dx\dt + c_1\int_{t_1}^{\tau}\int_{\mathcal D} G(|Dw|)\varphi^{g_1}\dx\dt \\
\leq \varepsilon c_2\int_{\mathcal K}G(|Dw|)\varphi^{g_1}\dx\dt + c(\varepsilon)\int_{\mathcal K}G(|D\varphi||w|)\dx\dt.
\end{split}
\]
We conclude by taking the essential supremum with respect to $\tau\in(t_1,t_2)$, choosing $\varepsilon\in(0,1)$ such that  $\varepsilon c_2\leq c_1/2$, reabsorbing the term on the right-hand side and recalling the definition of $w$.

The proof for subsolutions is very similar, taking into account that the test function $\eta$ must be nonnegative.
\end{proof}
\subsection{The geometry of the problem}\label{geometry}

In order to understand the equation, the first thing we want to stress is its scaling. Suppose $u$ solves the model equation \eqref{eq.model} in $Q_1=B_1\times(-1,0)$ and let $\kappa>0$. Then the function
\[
\bar u(x,t):= \kappa u\Big(\frac{x-x_0}{r},\frac{1}{\kappa^2}G\Big(\frac\kappa r\Big)(t-t_0)\Big)
\]
solves in 
\[
Q_r^\kappa(x_0,t_0):=B_r(x_0)\times\Big(t_0-\kappa^2\Big[G\Big(\frac\kappa r\Big)\Big]^{-1},t_0\Big)
\]
the equation 
\begin{equation}\label{eq.model.2}
\bar u_t -\divergence \Big( \frac{\bar g(|D\bar u|)}{|D\bar u|}D\bar u\Big)=0,
\end{equation}
where 
\begin{equation}\label{bar.g}
\bar g(s):=\frac\kappa r\Big[G\Big(\frac\kappa r\Big)\Big]^{-1}g\Big(\frac\kappa rs\Big). 
\end{equation}
The function $\bar g$ has the same structure as $g$, in the sense that it satisfies \eqref{O.property} exactly with parameters $g_0$ and 
$g_1$ and moreover, we have $\overline G(1)=1$, where 
\[
 \overline G(s):=\int_0^s\bar g(\sigma)\,d\sigma = \Big[G\Big(\frac\kappa r\Big)\Big]^{-1}G\Big(\frac{\kappa}rs\Big).
\]
Conversely, if we have a solution $w$ to \eqref{general.equation} in $Q_r^\kappa$, then 
\[
\bar w(x,t):=\frac1\kappa w\Big(x_0+rx, t_0+\kappa^2\Big[G\Big(\frac\kappa r\Big)\Big]^{-1}t\Big)
\]
solves \eqref{eq.model.2} in $Q_1$ with $\bar g$ as in \eqref{bar.g}.  In case we consider the general equation \eqref{general.equation}, the same scaling argument holds if we consider the vector field 
\[
\overline \A(\xi):=\frac\kappa r\Big[G\Big(\frac\kappa r\Big)\Big]^{-1}\A\Big(\frac\kappa r\xi\Big)
\]
which satisfies the structural conditions \eqref{assumptionsO} with $g$ replaced by the function $\bar g$.

\subsection{Other auxiliary results}
The following Lemma encodes the self-improving property of reverse H\"older inequalities. We take the form proposed in \cite[Lemma~5.1]{KMibe} with slight changes in order to meet our purposes.
\begin{Lemma}\label{KMlemma}
Let $\mu$ be a nonnegative Borel measure with finite total mass. Moreover, let $\gamma>1$ and 
$\{\sigma {\mathcal Q}\}_{0<\sigma\leq 1}$ be a family of open sets with the property
\[
\sigma'{\mathcal Q}\subset \sigma{\mathcal Q}\subset1{\mathcal Q}={\mathcal Q} 
\]
whenever $0<\sigma'<\sigma\leq 1$. If $w\in L^2({\mathcal Q})$ is a nonnegative function satisfying
\[
 \biggl(\int_{\sigma'{\mathcal Q}}w^{2\gamma}\,d\mu\biggr)^{1/(2\gamma)}
 \leq\frac{c_0}{\sigma-\sigma'} \biggl(\int_{\sigma{\mathcal Q}}w^2\,d\mu\biggr)^{1/2}
\]
for all $1/2\leq\sigma'<\sigma\leq 1$, then for any $0<q<2$ there is a positive constant $c\equiv c(c_0,\gamma,q)$ such that
\[
 \biggl(\int_{\sigma {\mathcal Q}}w^{2\gamma}\,d\mu\biggr)^{1/(2\gamma)}\leq\frac{c}{(1-\sigma)^{\xi}} \left(\int_{{\mathcal Q}}w^q\,d\mu\right)^{1/q},
\]
for all $0<\sigma<1$, where $\xi:=\frac{2\gamma-q}{q(\gamma-1)}$.
\end{Lemma}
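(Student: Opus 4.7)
This is a standard Gehring/Giaquinta--Modica self-improvement, carried out at the level of abstract radii. The three ingredients are interpolation, Young's inequality, and a geometric iteration adapted to the situation where the self-improvement exponent exceeds $1$. Throughout I would write $\Phi(\sigma):=\bigl(\int_{\sigma\mathcal Q}w^{2\gamma}\,d\mu\bigr)^{1/(2\gamma)}$, which is monotone nondecreasing in $\sigma$, and set $I:=\bigl(\int_{\mathcal Q}w^q\,d\mu\bigr)^{1/q}$.

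First, on any set $\sigma\mathcal Q$ I would interpolate the $L^2(\mu)$--norm between $L^q(\mu)$ and $L^{2\gamma}(\mu)$: writing $\tfrac12=\tfrac{\alpha}{q}+\tfrac{1-\alpha}{2\gamma}$ gives
\[
\alpha=\frac{q(\gamma-1)}{2\gamma-q},\qquad \frac{1}{\alpha}=\frac{2\gamma-q}{q(\gamma-1)}=\xi,
\]
so that
\[
\biggl(\int_{\sigma\mathcal Q}w^2\,d\mu\biggr)^{1/2}\leq I^{\alpha}\,\Phi(\sigma)^{1-\alpha}.
\]
Inserting this into the hypothesis yields, for $\tfrac12\leq\sigma'<\sigma\leq 1$,
\[
\Phi(\sigma')\leq \frac{c_0}{\sigma-\sigma'}\,I^{\alpha}\,\Phi(\sigma)^{1-\alpha}.
\]
Now I would apply Young's inequality with conjugate exponents $1/\alpha$ and $1/(1-\alpha)$, tuning the free parameter so that the coefficient of $\Phi(\sigma)$ on the right becomes $\tfrac12$. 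This produces the absorbing inequality
\[
\Phi(\sigma')\leq \frac{1}{2}\Phi(\sigma)+\frac{c_1}{(\sigma-\sigma')^{\xi}}\,I,\qquad \tfrac12\leq\sigma'<\sigma\leq 1,
\]
with $c_1$ depending only on $c_0,\gamma,q$.

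The last step is the iteration, which is the only point requiring care: since $q<2$ forces $\xi>1$, the usual dyadic choice $\sigma_{k+1}-\sigma_k\sim 2^{-k}$ fails because $\sum 2^{-k}(\sigma_{k+1}-\sigma_k)^{-\xi}$ diverges. Instead, for fixed $\sigma\in[\tfrac12,1)$ I would set $\sigma_0:=\sigma$ and
\[
\sigma_{k+1}-\sigma_k=(1-\sigma)(1-\lambda)\lambda^k,\qquad \lambda:=2^{-1/(2\xi)},
\]
so that $\sigma_k\uparrow 1$ and $\lambda^{\xi}=2^{-1/2}$. Iterating the absorbing inequality $N$ times and letting $N\to\infty$ gives
\[
\Phi(\sigma)\leq \sum_{k=0}^{\infty}2^{-k}\,\frac{c_1\,I}{[(1-\sigma)(1-\lambda)\lambda^k]^{\xi}}=\frac{c_1\,I}{[(1-\sigma)(1-\lambda)]^{\xi}}\sum_{k=0}^{\infty}(2\lambda^{\xi})^{-k},
\]
a convergent geometric series since $2\lambda^{\xi}=\sqrt 2>1$ (the $2^{-N}\Phi(\sigma_N)$ remainder term vanishes because $\Phi(\sigma_N)\leq\Phi(1)<\infty$). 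The resulting constant depends only on $c_0,\gamma,q$, giving the claim for $\sigma\in[\tfrac12,1)$. For $\sigma\in(0,\tfrac12)$ I would simply invoke the monotonicity $\Phi(\sigma)\leq\Phi(\tfrac12)$ together with the trivial bound $(1-\sigma)^{-\xi}\geq 1$, which absorbs the case into the previous one.

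The only genuine obstacle is the divergence issue in the iteration for $\xi>1$; once the geometric step sizes with $\lambda^{\xi}>\tfrac12$ are chosen, everything else is bookkeeping. The rest of the argument is linear in the hypothesis and produces no dependencies beyond $c_0$, $\gamma$, $q$.
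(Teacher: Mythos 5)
Your argument is the standard interpolation--Young--iteration proof of this self-improvement lemma, and it is essentially the route taken in the reference the paper cites for it ([KMibe, Lemma~5.1]); the paper itself does not reprove the lemma. The interpolation exponent, the Young absorption with constant $1/2$, and the geometric choice of step sizes with $\lambda^{\xi}>1/2$ are all correct and produce exactly the exponent $\xi=(2\gamma-q)/(q(\gamma-1))$.

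The one weak point is the justification of why the remainder $2^{-N}\Phi(\sigma_N)$ vanishes. You write $\Phi(\sigma_N)\leq\Phi(1)<\infty$, but the statement only assumes $w\in L^2(\mathcal Q,\mu)$, not $w\in L^{2\gamma}(\mathcal Q,\mu)$, so $\Phi(1)$ need not be finite a priori. The fix is to invoke the assumed reverse H\"older inequality itself with $\sigma=1$ and $\sigma'=\sigma_N$, which gives
\[
 \Phi(\sigma_N)\leq\frac{c_0}{1-\sigma_N}\Big(\int_{\mathcal Q}w^2\,d\mu\Big)^{1/2}
 =\frac{c_0}{(1-\sigma)\lambda^N}\Big(\int_{\mathcal Q}w^2\,d\mu\Big)^{1/2},
\]
so that $2^{-N}\Phi(\sigma_N)\leq C(2\lambda)^{-N}\to 0$; this holds because $2\lambda=2^{1-1/(2\xi)}>1$, which in turn follows from $\xi>1$, valid for every $0<q<2$ and $\gamma>1$. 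With this one-line replacement the proof is complete and the dependence of the constant on $c_0,\gamma,q$ only is preserved.
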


The next one is a classic iteration Lemma. 
\begin{Lemma}\label{iteration}
Let $\phi:[R,2R]\to[0,\infty)$ be a function such that
\[
	\phi(r)\leq\frac12\phi(s)+\frac{A}{(s-r)^\beta}+B
	\qquad\text{for every}\ R\leq r<s\leq 2R,
\]
where $A,B\geq1$ and $\beta>0$. Then
\[
	\phi(R)\leq c(\beta)\, \bigg[\frac{A}{R^\beta}+B\bigg].
\]
 
\end{Lemma}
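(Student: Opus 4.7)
The plan is to iterate the assumption along a geometrically chosen sequence of radii. Fix once and for all $\tau\in(0,1)$ large enough that the damping factor $1/2$ dominates the blow-up coming from shrinking separations, namely $2\tau^\beta>1$; an explicit admissible choice is $\tau:=(3/4)^{1/\beta}$. Define the increasing sequence $r_0:=R$ and $r_{i+1}:=r_i+R(1-\tau)\tau^i$, so that $r_i=2R-R\tau^i$ converges from $R$ to $2R$, with $r_{i+1}-r_i=R(1-\tau)\tau^i\in(0,R]$.

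Applying the assumed recursion at the pair $(r_i,r_{i+1})$ gives
\[
\phi(r_i)\leq\frac12\phi(r_{i+1})+\frac{A}{[R(1-\tau)]^\beta\,\tau^{i\beta}}+B,
\]
and a straightforward induction on $k$ yields
\[
\phi(R)\leq 2^{-k}\phi(r_k)+\frac{A}{[R(1-\tau)]^\beta}\sum_{i=0}^{k-1}(2\tau^\beta)^{-i}+B\sum_{i=0}^{k-1}2^{-i}.
\]
By the choice of $\tau$, the geometric series $\sum_i(2\tau^\beta)^{-i}$ is summable with sum depending only on $\beta$, while $\sum_i 2^{-i}\leq 2$. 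Assuming $\phi$ is bounded on $[R,2R]$ (tacit in the statement, and automatic in all the intended applications of the lemma in this paper), the term $2^{-k}\phi(r_k)$ vanishes as $k\to\infty$, and absorbing all $\tau$-factors into a constant $c(\beta)$ we conclude
\[
\phi(R)\leq c(\beta)\,\frac{A}{R^\beta}+2B\leq c(\beta)\bigg[\frac{A}{R^\beta}+B\bigg].
\]

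The argument is entirely standard and the only conceptual step is the calibration of $\tau$ so that $(2\tau^\beta)^{-1}<1$; this is what makes the iteration converge despite the $(s-r)^{-\beta}$ singularity. There is no serious obstacle, and the only place where care is needed is the legitimate passage $k\to\infty$, which requires local boundedness of $\phi$.
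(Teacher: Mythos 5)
The paper gives no proof for this lemma — it is cited as a ``classic iteration Lemma'' — so there is nothing internal to compare against; your argument is the standard textbook proof (the geometric sequence $r_i=2R-R\tau^i$ with $\tau$ calibrated so that $2\tau^\beta>1$, e.g.\ $\tau=(3/4)^{1/\beta}$), and it is correct. The one subtlety you flag is genuine: the lemma as stated omits the hypothesis that $\phi$ is bounded on $[R,2R]$, which is needed to justify $2^{-k}\phi(r_k)\to 0$. That hypothesis does appear in the standard references and is automatic wherever the lemma is invoked in this paper, namely with $\phi(r)=\|Du_\eps\|_{L^{\infty}(Q_r)}$ for the regularized solutions $u_\eps$, which satisfy the qualitative regularity \eqref{add.ass} and hence have a finite gradient sup-norm on every compactly contained cylinder. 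So your proof, together with that remark, is complete and accurate.
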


\section{A priori Lipschitz estimates}

In this section we impose on $u$ an additional regularity assumption and prove intrinsic estimates for the gradient of $u$. To be precise, we shall suppose
\begin{equation}\label{add.ass}
u,Du\in C^0_\loc(\Omega_T),\qquad u\in L^2_\loc(0,T;W^{2,2}_\loc(\Omega)).
\end{equation}
This is to say, we shall prove the estimates of this section as {\em a priori estimates}, leaving to Section \ref{approximation} 
the approximation procedure which will explain how to deduce the desired estimates without the additional assumption \eqref{add.ass}. Notice that the continuity of $u$ and $Du$ allows us to treat their pointwise values. Due to the assumed extra regularity it will be possible to differentiate the equation; this will be done by showing that the function 
\begin{equation}\label{v}
v:=|Du|^2 
\end{equation}
is a subsolution to a similar equation.

\begin{Lemma}\label{vsub}
Let $u$ be a weak solution to \eqref{general.equation}$_1$ in $\Omega_T$ and, moreover, assume that the regularity assumptions \eqref{add.ass} hold. Then $v$ is a weak subsolution to 
\begin{equation}\label{sub.differentiated}
  \partial_t v - \divergence\left(D\A(Du)Dv\right) = 0\qquad\text{in $\Omega_T$}.
\end{equation}
\end{Lemma}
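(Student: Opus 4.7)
The argument is the standard Bernstein-type differentiation of the equation, carried out at the weak level. First I would show that each partial derivative $D_ku$ is a weak solution of the linear equation $\partial_tw-\divergence(D\A(Du)Dw)=0$. This is obtained by testing \eqref{weaksolution} with $\eta=-D_k\psi$ for $\psi\in C^\infty_c(\Omega_T)$ and integrating by parts in $x_k$; the extra regularity $u\in L^2_\loc(0,T;W^{2,2}_\loc(\Omega))$ together with the $C^1$ character of $\A$ away from the origin justifies the chain rule $D_k[\A(Du)]=D\A(Du)\,D(D_ku)$, with the full manipulation made rigorous via difference quotients in the $x_k$ direction. The outcome is
\[
\int_{\Omega_T}\bigl[-D_ku\,\partial_t\psi+\langle D\A(Du)\,D(D_ku),D\psi\rangle\bigr]\,dx\,dt=0
\]
for every $\psi\in C^\infty_c(\Omega_T)$ and every $k=1,\dots,n$.

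Next, for a nonnegative cut-off $\varphi\in C^\infty_c(\Omega_T)$ I would insert the choice $\psi=(D_ku)\varphi$ into the identity above. Since $D_ku$ is only $W^{1,2}$ in space and has no pointwise time derivative, the test is carried out on the Steklov-averaged form of the linearized equation (in the spirit of \eqref{Steklov.formulation}): at level $h>0$ the parabolic term produces $\tfrac12\partial_t\bigl[((D_ku)_h)^2\bigr]\varphi$, which after integration by parts in time becomes $-\tfrac12((D_ku)_h)^2\partial_t\varphi$, while the elliptic part splits into $\langle[D\A(Du)D(D_ku)]_h,D((D_ku)_h)\rangle\varphi$ and $\langle[D\A(Du)D(D_ku)]_h,D\varphi\rangle(D_ku)_h$. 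Letting $h\to0$, which is justified by the $L^2_\loc$ convergence of the relevant Steklov averages together with the local boundedness of $Du$ coming from \eqref{add.ass}, one arrives at the limit identity
\[
\int_{\Omega_T}\Bigl[-\tfrac12(D_ku)^2\partial_t\varphi+\langle D\A(Du)D(D_ku),D(D_ku)\rangle\varphi+D_ku\,\langle D\A(Du)D(D_ku),D\varphi\rangle\Bigr]\,dx\,dt=0.
\]

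Finally, summing over $k=1,\dots,n$ and using the identity $\tfrac12 Dv=\sum_k D_ku\,D(D_ku)$, the term involving $D\varphi$ collapses to $\tfrac12\langle D\A(Du)Dv,D\varphi\rangle$, while $\sum_k\langle D\A(Du)D(D_ku),D(D_ku)\rangle\geq0$ by the ellipticity assumption in \eqref{assumptionsO}. Discarding this nonnegative contribution and multiplying by $2$ yields
\[
\int_{\Omega_T}\bigl[-v\,\partial_t\varphi+\langle D\A(Du)Dv,D\varphi\rangle\bigr]\,dx\,dt\leq 0
\]
for every nonnegative $\varphi\in C^\infty_c(\Omega_T)$, which is precisely the weak subsolution property for \eqref{sub.differentiated}. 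The main technical point is the rigorous justification of the two test-function manipulations in the presence of a coefficient matrix $D\A$ that may be singular at $\{Du=0\}$; this is however harmless because the bound $|D\A(\xi)|\leq Lg(|\xi|)/|\xi|$ combined with $Du\in L^\infty_\loc$ (so that $g(|Du|)$ is bounded on compact subsets) and $D^2u\in L^2_\loc$ ensures the local integrability of all the terms that actually appear in the final identity, and the only contribution whose integrability is delicate -- the positive one coming from pure ellipticity -- is precisely the one we discard.
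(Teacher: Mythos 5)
Your proof is correct and follows essentially the same approach as the paper's: differentiate the equation in each coordinate direction, multiply by $D_ku$, sum over $k$, integrate by parts, and discard the nonnegative ellipticity term $\sum_k\langle D\A(Du)D(D_ku),D(D_ku)\rangle\varphi$. The paper collapses your two steps into one by testing \eqref{weaksolution} directly with $\eta=-D_j(D_ju\varphi)$, whereas you first derive the linearized equation for $D_ku$ and then test it with $(D_ku)\varphi$; the algebra and the justification via Steklov averages are identical.
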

\begin{proof}
Formally, the idea is to differentiate equation \eqref{general.equation}$_1$ with respect to $x_j$ for $j=1,\ldots,n$, then multiply by 
$D_ju$, and finally sum over $j$. To this end, let $0\leq\varphi\in C^{\infty}_c(\Omega_T)$, and test \eqref{weaksolution} with 
\[
 \eta = -D_j\left(D_ju\varphi\right).
\]
This choice can be justified by using Steklov averages, as done previously in the paper; we shall proceed formally. Integration by parts yields 
 \begin{align*}
  0&=-\int_{\Omega_T}u\,\partial_t\left(-D_j\left(D_ju\varphi\right)\right)\dx\dt 
  + \int_{\Omega_T}\langle\A(Du), D\left(-D_j\left(D_ju\varphi\right)\right)\rangle\dx\dt\\
  &=\int_{\Omega_T} \partial_t(D_ju)D_ju\varphi\dx\dt
  + \int_{\Omega_T}\langle D_j\A(Du), D\left(D_ju\varphi\right)\rangle\dx\dt\\
  &=-\frac{1}{2}\int_{\Omega_T} |D_ju|^2\partial_t\varphi\dx\dt
  +\frac{1}{2}\int_{\Omega_T}\langle D\A(Du)D\big(|D_ju|^2\big), D\varphi\rangle\dx\dt\\
  &\quad+\int_{\Omega_T}\langle D\A(Du)DD_ju, DD_ju\rangle\varphi\dx\dt.
 \end{align*}
Now, since 
\[
 \int_{\Omega_T}\langle D\A(Du)DD_ju, DD_ju\rangle\varphi\dx\dt\geq\nu\int_{\Omega_T}\frac{g(|Du|)}{|Du|}|DD_ju|^2\varphi\dx\dt\geq 0
\]
by $\eqref{assumptionsO}_1$, summing up over $j=1,\ldots,n$ leads to 
\[
 -\int_{\Omega_T} |Du|^2\partial_t\varphi\dx\dt+\int_{\Omega_T}\langle D\A(Du)D|Du|^2, D\varphi\rangle\dx\dt\leq 0.
\]
This proves the claim.
\end{proof}
Next we prove a Caccioppoli inequality of porous medium type for the function $v$.

\begin{Lemma}\label{caccioppoli2}
 Let $u$ be a weak solution of \eqref{general.equation} in $\Omega_T$ and assume that \eqref{add.ass} holds. Let $\mathcal K := \mathcal D\times (t_1,t_2) \Subset \Omega_T$ and $k\in\R$. Then there exists a constant $c\equiv c(\nu,L)$ such that 
 \begin{multline*}
  \sup_{\tau\in(t_1,t_2)}\int_{\mathcal D}[(v-k)_+^2\varphi^2](\cdot,\tau)\dx+\int_{{\mathcal K}}\frac{g(|Du|)}{|Du|}|D(v-k)_+|^2\varphi^2\dx\dt\\
  \qquad\leq c\int_{{\mathcal K}}(v-k)_+^2\left(\frac{g(|Du|)}{|Du|}|D\varphi|^2+\left|\partial_t\varphi\right|\right)\dx\dt
 \end{multline*}
\end{Lemma}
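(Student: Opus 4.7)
The proof is a direct application of the Caccioppoli testing procedure to the subsolution inequality provided by Lemma~\ref{vsub}, exploiting the fact that the ellipticity and growth constants of the linearized operator $D\mathcal A(Du)$ carry exactly the weight $g(|Du|)/|Du|$ appearing in both sides of the claimed estimate. The plan is to test the weak form of \eqref{sub.differentiated} with $\eta=(v-k)_+\varphi^2\chi_{(t_1,\tau)}$, which is legitimate (and nonnegative) thanks to \eqref{add.ass} and the standard Steklov averaging procedure already used in the proof of Lemma~\ref{caccioppoli}, and then absorb.

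Set $w:=(v-k)_+$. After replacing $v$ and $\mathcal A(Du)$ by their Steklov averages, testing with $\eta=w_h\varphi^2\chi_{(t_1,\tau)}$ and passing to the limit $h\to 0$ exactly as in \eqref{parabolic}, the parabolic term produces
\[
\frac12\int_{\mathcal D}w^2\varphi^2(\cdot,\tau)\dx-\frac12\int_{t_1}^{\tau}\!\!\int_{\mathcal D}w^2\,\partial_t(\varphi^2)\dx\dt,
\]
where I use that $\varphi$ vanishes on the parabolic boundary of $\mathcal K$ so there is no initial contribution. For the elliptic part, expanding the gradient of $\eta$ gives the splitting
\[
\int_{t_1}^{\tau}\!\!\int_{\mathcal D}\langle D\mathcal A(Du)Dw,Dw\rangle\varphi^2\dx\dt+2\int_{t_1}^{\tau}\!\!\int_{\mathcal D}\langle D\mathcal A(Du)Dw,D\varphi\rangle\,w\,\varphi\dx\dt,
\]
where the identity $D(v-k)_+=Dv\,\chi_{\{v>k\}}$ has been used together with the symmetry of the arguments.

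The first integral is bounded below by $\nu\int\frac{g(|Du|)}{|Du|}|Dw|^2\varphi^2\dx\dt$ thanks to $\eqref{assumptionsO}_1$. For the second, $\eqref{assumptionsO}_2$ gives
\[
\bigl|2\langle D\mathcal A(Du)Dw,D\varphi\rangle\,w\,\varphi\bigr|\le 2L\,\frac{g(|Du|)}{|Du|}|Dw||D\varphi|\,w\,\varphi,
\]
so Young's inequality with $\varepsilon\in(0,1)$, applied to the factors $\bigl(\tfrac{g(|Du|)}{|Du|}\bigr)^{1/2}|Dw|\varphi$ and $\bigl(\tfrac{g(|Du|)}{|Du|}\bigr)^{1/2}|D\varphi|\,w$, yields
\[
\varepsilon\int_{\mathcal K}\frac{g(|Du|)}{|Du|}|Dw|^2\varphi^2\dx\dt+c(\varepsilon,L)\int_{\mathcal K}\frac{g(|Du|)}{|Du|}|D\varphi|^2 w^2\dx\dt.
\]
Collecting everything, choosing $\varepsilon=\nu/2$ to reabsorb the gradient term of $w$ on the left, taking the essential supremum in $\tau\in(t_1,t_2)$, and estimating $\bigl|\partial_t(\varphi^2)\bigr|\le 2|\partial_t\varphi|$ (using $0\le\varphi\le 1$) produces the desired inequality with a constant depending only on $\nu$ and $L$.

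The only mild subtlety is justifying the use of $w_h\varphi^2\chi_{(t_1,\tau)}$ as a test function in \eqref{sub.differentiated}: since $v=|Du|^2$ is merely in $L^2_{\rm loc}(0,T;W^{1,2}_{\rm loc}(\Omega))$ by \eqref{add.ass} (and not, a priori, with a time derivative), one argues through the Steklov formulation \eqref{Steklov.formulation} applied to the differentiated equation proved in Lemma~\ref{vsub}, exactly as done for Lemma~\ref{caccioppoli}; this is the only step where a little care is needed, and it is routine.
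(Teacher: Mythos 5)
Your proof is correct and follows essentially the same route as the paper: test the subsolution inequality from Lemma~\ref{vsub} with $\eta=(v-k)_+\varphi^2\chi_{(t_1,\tau)}$ via Steklov averaging, bound the principal elliptic term from below by $\eqref{assumptionsO}_1$, control the cross term by $\eqref{assumptionsO}_2$ and Young's inequality with a small parameter, reabsorb, and take the supremum in $\tau$. The only cosmetic difference is your explicit remark that $|\partial_t(\varphi^2)|\leq 2|\partial_t\varphi|$ requires $0\leq\varphi\leq 1$; the paper leaves this implicit, but the constraint is indeed imposed when the lemma is applied.
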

for every $\varphi\in C^{\infty}(\mathcal K)$ vanishing in a neighborhood of $\partial_p\mathcal K$. 
\begin{proof}
We can take 
\[
 \eta = (v-k)_+\varphi^2\chi_{(t_1,\tau)}
\]
for $\tau\in(t_1,t_2)$ as the test function in the weak formulation of \eqref{sub.differentiated}, up to a regularization similar to the previous ones. For the parabolic part we have 
\begin{multline*}
-\int_{t_1}^{\tau}\int_{\mathcal D} v\,\partial_t\left((v-k)_+\varphi^2\right)\dx\dt
=\frac{1}{2}\int_{t_1}^{\tau}\int_{\mathcal D}\partial_t(v-k)_+^2\varphi^2\dx\dt\\
=\frac{1}{2}\int_{\mathcal D}[(v-k)_+^2\varphi^2](\cdot,\tau)\dx
-\frac{1}{2}\int_{t_1}^{\tau}\int_{\mathcal D}(v-k)_+^2\partial_t\varphi^2\dx\dt.
\end{multline*}
The elliptic term can be estimated from below by using the assumptions \eqref{assumptionsO} and Young's inequality with 
$\varepsilon=\nu/(2L)$. This gives 
 \begin{align*}
\int_{t_1}^{\tau}\int_{\mathcal D}&\big\langle D\A(Du)Dv, D\left((v-k)_+\varphi^2\right)\big\rangle\dx\dt\\
 &=\int_{t_1}^{\tau}\int_{\mathcal D}\langle D\A(Du)D(v-k)_+, D(v-k)_+\rangle\varphi^2\dx\dt\\
 &\qquad+2\int_{t_1}^{\tau}\int_{\mathcal D}\langle D\A(Du)D(v-k)_+, D\varphi\rangle\,(v-k)_+\varphi\dx\dt\\
 &\geq\nu\int_{t_1}^{\tau}\int_{\mathcal D}\frac{g(|Du|)}{|Du|}|D(v-k)_+|^2\varphi^2\dx\dt\\
 &\qquad-2L\int_{t_1}^{\tau}\int_{\mathcal D}\frac{g(|Du|)}{|Du|}|D(v-k)_+||D\varphi|\,(v-k)_+\varphi\dx\dt\\
 &\geq\frac{\nu}{2}\int_{t_1}^{\tau}\int_{\mathcal D}\frac{g(|Du|)}{|Du|}|D(v-k)_+|^2\varphi^2\dx\dt\\
 &\qquad-c(\nu,L)\int_{t_1}^{\tau}\int_{\mathcal D}\frac{g(|Du|)}{|Du|}|D\varphi|^2\,(v-k)_+^2\dx\dt,
 \end{align*}
and thus, we obtain 
 \begin{multline*}
\int_{\mathcal D}[(v-k)_+^2\varphi^2](\cdot,\tau)\dx
+\nu\int_{t_1}^{\tau}\int_{\mathcal D}\frac{g(|Du|)}{|Du|}|D(v-k)_+|^2\varphi^2\dx\dt\\
\leq c\,\int_{{\mathcal K}}\frac{g(|Du|)}{|Du|}|D\varphi|^2\,(v-k)_+^2\dx\dt
+\int_{{\mathcal K}}(v-k)_+^2\left|\partial_t\varphi\right|\dx\dt.
\end{multline*}
Since $\tau\in(t_1,t_2)$ was arbitrary, the result follows.
\end{proof}

Combining the previous lemma with Sobolev's inequality leads to the following estimate.

\begin{Lemma}\label{reverseholder}
Let the assumptions of Lemma~\ref{caccioppoli2} be in force. Then there exists a constant 
$c\equiv c(n,g_1,\nu,L)$ such that 
\begin{multline}\label{gammaCaccioppoli}
 \mint_{\mathcal K}\frac{g(|Du|)}{|Du|}(v-k)_+^{2\gamma}\varphi^{2\gamma}\dx\dt\\
 \leq c\,|\mathcal D|^{2/n}(t_2-t_1)^{\gamma-1}\left(\mint_{\mathcal K}(v-k)_+^2
 \left(\frac{g(|Du|)}{|Du|}|D\varphi|^2+\left|\partial_t\varphi\right|\right)\dx\dt\right)^{\gamma},
\end{multline}
where - recall \eqref{convention} - 
\[
 \gamma:=2-\frac2{2^*}>1.
\]
\end{Lemma}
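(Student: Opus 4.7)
The approach is the classical combination of Caccioppoli's estimate with a parabolic Sobolev--Gagliardo--Nirenberg-type interpolation. Abbreviating $w:=(v-k)_+$, $W:=w\varphi$ and $h:=g(|Du|)/|Du|$, and denoting by $\mathcal I$ the right-hand side of the Caccioppoli inequality in Lemma~\ref{caccioppoli2}, that lemma yields
\[
\sup_{t_1<\tau<t_2}\int_{\mathcal D}W^2(\cdot,\tau)\dx\leq c\mathcal I,\qquad \int_{\mathcal K}h|Dw|^2\varphi^2\dx\dt\leq c\mathcal I.
\]
The product rule $|DW|^2\leq 2|Dw|^2\varphi^2+2w^2|D\varphi|^2$, together with the fact that $\int_{\mathcal K}hw^2|D\varphi|^2$ is already a term in $\mathcal I$, upgrades the gradient bound to $\int_{\mathcal K}h|DW|^2\dx\dt\leq c\mathcal I$.

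Next, at each fixed time $\tau$ I interpolate $L^{2\gamma}(\mathcal D)$ between $L^2$ and $L^{2^*}$ by Hölder's inequality,
\[
\int_{\mathcal D}W^{2\gamma}\dx\leq\Big(\int_{\mathcal D}W^2\dx\Big)^{\gamma-1}\Big(\int_{\mathcal D}W^{2^*}\dx\Big)^{2/2^*},
\]
and bound the last factor by the Sobolev embedding $W^{1,2}_0(\mathcal D)\hookrightarrow L^{2^*}(\mathcal D)$, which is scale invariant for $n\geq 3$ and, in the case $n=2$ (using the convention $2^*=4$), reduces to Ladyzhenskaya's inequality $\|W\|_{L^4}^2\leq c|\mathcal D|^{1/2}\|DW\|_{L^2}^2$. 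The weight $h$ is carried through the same chain by performing a Hölder decomposition of the form $hW^{2\gamma}=(hW^2)\cdot W^{2(\gamma-1)}$ with $h$ inserted into the integrand; this is possible because $h$ multiplies both sides of the claimed estimate symmetrically. Integrating in $\tau$ and using the $L^\infty_\tau L^2_x$ bound on $W$ yields
\[
\int_{\mathcal K}hW^{2\gamma}\dx\dt\leq c\Big(\sup_\tau\int W^2\Big)^{\gamma-1}\int_{\mathcal K}h|DW|^2\dx\dt,
\]
plus an extra $|\mathcal D|^{1/2}$ factor when $n=2$. Substituting the two Caccioppoli bounds and passing to averaged integrals contributes a factor $|\mathcal K|^{\gamma-1}=|\mathcal D|^{\gamma-1}(t_2-t_1)^{\gamma-1}$, which equals $|\mathcal D|^{2/n}(t_2-t_1)^{\gamma-1}$ for $n\geq 3$ and, combined with the Ladyzhenskaya constant, also yields $|\mathcal D|^{2/n}(t_2-t_1)^{\gamma-1}$ for $n=2$. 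This is precisely the prefactor in the statement.

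The main technical obstacle is propagating the weight $h$ through the Sobolev step: since $h=g(|Du|)/|Du|$ is not known to satisfy any Muckenhoupt condition, classical weighted Sobolev theory does not apply directly. The resolution relies on the symmetric placement of $h$ in the claimed inequality---the same weight multiplies the integrand on the left-hand side and the gradient term on the right-hand side of the Caccioppoli estimate---so a careful Hölder redistribution is enough, without requiring any $A_p$ hypothesis on $h$. The remaining computations are routine bookkeeping of dimensional factors.
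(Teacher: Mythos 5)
Your high-level plan—slicewise Hölder interpolation of $L^{2\gamma}$ between $L^2$ and $L^{2^*}$, Sobolev embedding, and insertion of the Caccioppoli estimate of Lemma~\ref{caccioppoli2}—is exactly the route the paper takes. But there is a genuine gap at the Sobolev step, which is precisely where the structure condition \eqref{O.property} on $g$ has to enter, and your sketch silently omits it.

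The Hölder decomposition $hW^{2\gamma}=(hW^2)\cdot W^{2(\gamma-1)}$ with exponents $2^*/2$ and $2^*/(2^*-2)$ produces the factor $\big(\int_{\mathcal D}(h^{1/2}W)^{2^*}\dx\big)^{2/2^*}$, so Sobolev must be applied to the weighted function $h^{1/2}W$, not to $W$. Its gradient contains the extra term $W\,D(h^{1/2})$, and $|D(h^{1/2}W)|^2$ is emphatically not equal to $h|DW|^2$. Your claimed intermediate inequality
\[
\int_{\mathcal K}hW^{2\gamma}\dx\dt\leq c\Big(\sup_\tau\int_{\mathcal D} W^2\dx\Big)^{\gamma-1}\int_{\mathcal K}h|DW|^2\dx\dt
\]
drops the contribution of $W^2|D(h^{1/2})|^2$, and no amount of Hölder "redistribution" makes it disappear. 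The remark about the symmetric placement of $h$ does not resolve this: the right-hand side of the Caccioppoli estimate controls $\int h|D(v-k)_+|^2\varphi^2$, not $\int|D(h^{1/2}W)|^2$.

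The missing step is a pointwise computation using the Orlicz bound \eqref{O.property}. Writing $h=g(\sqrt v)/\sqrt v$ and differentiating,
\[
D\big(h^{1/2}\big)=\frac14\,h^{1/2}\,\big(\mathcal O_g(\sqrt v)-1\big)\frac{Dv}{v},\qquad\text{so}\qquad |D(h^{1/2})|\leq c(g_1)\,h^{1/2}\,\frac{|Dv|}{v},
\]
and hence, on the support of $(v-k)_+$ (where $(v-k)_+\leq v$ and $Dv=D(v-k)_+$),
\[
W^2|D(h^{1/2})|^2\leq c(g_1)\,h\,\frac{(v-k)_+^2}{v^2}|D(v-k)_+|^2\varphi^2\leq c(g_1)\,h\,|D(v-k)_+|^2\varphi^2.
\]
Only after this absorption does $\int|D(h^{1/2}W)|^2$ get controlled by the Caccioppoli right-hand side, and it is here that the constant acquires its $g_1$-dependence. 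Without this pointwise bound—carried out explicitly as the "straightforward calculation" in the paper's proof—the argument does not close. The rest of your bookkeeping (the $L^\infty_\tau L^2_x$ factor, the $|\mathcal D|^{2/n}(t_2-t_1)^{\gamma-1}$ prefactor, the $n=2$ convention $2^*=4$) is in order.
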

\begin{proof}
By H\"older's and Sobolev's inequalities we have 
\begin{align}\label{reverseholdereq1}
  \mint_{\mathcal K}&\frac{g(|Du|)}{|Du|}(v-k)_+^{2\gamma}\varphi^{2\gamma}\dx\dt\notag\\
  &=\frac{1}{t_2-t_1}\int_{t_1}^{t_2}\mint_{\mathcal D}\frac{g(|Du|)}{|Du|}(v-k)_+^{2}\varphi^{2}
  \big((v-k)_+^{2}\varphi^{2}\big)^{1-2/2^*}\dx\dt\notag\\
  &\leq\frac{1}{t_2-t_1}\int_{t_1}^{t_2}\biggl(\mint_{\mathcal D}
  \biggl(\Big(\frac{g(|Du|)}{|Du|}\Big)^{1/2}(v-k)_+\varphi\biggr)^{2^*}\dx\biggr)^{2/2^*}\notag\\
  &\qquad\qquad\qquad\qquad\qquad\qquad\qquad\qquad\times\biggl(\mint_{\mathcal D}(v-k)_+^2\varphi^2\dx\biggr)^{1-2/2^*}\dt\notag\\
  &\leq c(n)|\mathcal D|^{2/n}\biggl(\sup_{\tau\in(t_1,t_2)}\mint_{\mathcal D}[(v-k)_+^2\varphi^2](\cdot,\tau)\dx\biggr)^{1-2/2^*}\notag\\
  &\qquad\qquad\qquad\qquad\qquad\qquad\times
  \mint_{\mathcal K}\biggl|D\biggl(\Big(\frac{g(|Du|)}{|Du|}\Big)^{1/2}(v-k)_+\varphi\biggr)\biggr|^2\dx\dt.
\end{align}
A straightforward calculation yields 
\begin{align*}
  &\biggl|D\biggl(\Big(\frac{g(|Du|)}{|Du|}\Big)^{1/2}(v-k)_+\varphi\biggr)\biggr|^2\\
  &\qquad=\biggl|\biggl[\frac{(v-k)_+}{4v}\left(\frac{|Du|g'(|Du|)}{g(|Du|)}-1\right)+1\biggr]
  \Big(\frac{g(|Du|)}{|Du|}\Big)^{1/2}D(v-k)_+\varphi\\
  &\qquad\qquad\qquad\qquad\qquad\qquad\qquad\qquad\qquad\qquad
  +\Big(\frac{g(|Du|)}{|Du|}\Big)^{1/2}(v-k)_+D\varphi\biggr|^2\\
  &\qquad\leq c(g_1)\frac{g(|Du|)}{|Du|}\left|D(v-k)_+\right|^2\varphi^2
  +2\,\frac{g(|Du|)}{|Du|}(v-k)_+^2|D\varphi|^2,
\end{align*}
and thus, integrating and estimating the first term using Lemma~\ref{caccioppoli2} yields
\begin{multline*}
  \mint_{\mathcal K}\biggl|D\biggl(\Big(\frac{g(|Du|)}{|Du|}\Big)^{\frac{1}{2}}(v-k)_+\varphi\biggr)\biggr|^2\dx\dt\\
  \leq c\mint_{\mathcal K}(v-k)_+^2\left(\frac{g(|Du|)}{|Du|}|D\varphi|^2+\left|\partial_t\varphi\right|\right)\dx\dt,
\end{multline*}
where the constant $c$ depends only on $g_1,\nu$, and $L$. From Lemma~\ref{caccioppoli2} it also follows that 
\begin{multline*}
\sup_{\tau\in(t_1,t_2)}\mint_{\mathcal D}[(v-k)_+^2\varphi^2](\cdot,\tau)\dx\\
\leq c\,(t_2-t_1)\mint_{\mathcal K}(v-k)_+^2\left(\frac{g(|Du|)}{|Du|}|D\varphi|^2+\left|\partial_t\varphi\right|\right)\dx\dt;
\end{multline*}
therefore, by inserting the previous two inequalities into \eqref{reverseholdereq1} we obtain \eqref{gammaCaccioppoli}.
 \end{proof}

Next the aim is to prove an intrinsic reverse H\"older's inequality. To this end, let $Q_\rho(x_0,t_0)\subset \Omega_T$, let $\lambda\geq 1$ be such that
\begin{equation}\label{lambdaineq}
\lambda\geq \frac14 \sup_{Q_\rho(x_0,t_0)}|Du|,
\end{equation}
and set 
\[
 \theta_{\lambda} := \frac{g(\lambda)}{\lambda}. 
\]
We introduce the intrinsic cylinder 
\begin{equation}\label{cyl.int}
Q_{\rho}^{\lambda}\equiv Q_{\rho}^{\lambda}(x_0,t_0) 
:= \min\{1,\theta_{\lambda}\}^{1/2}B_{\rho}(x_0)\times\left(t_0-\min\{1, \theta_{\lambda}^{-1}\}\rho^2,t_0\right). 
\end{equation}
Note that we have the alternative expression
\[
Q_{\rho}^{\lambda} =
\begin{cases}
   B_{\rho}(x_0)\times\left(t_0-\theta_{\lambda}^{-1}\rho^2,t_0\right), \qquad& \theta_{\lambda} \geq 1\\[4mm]
   \theta_{\lambda}^{1/2}B_{\rho}(x_0)\times\left(t_0-\rho^2,t_0\right), & 0<\theta_{\lambda} < 1,\\[1mm]
\end{cases}
\]
from which we easily see the analogy with the intrinsic geometry used to handle the parabolic $p$-Laplacian, recalling that in this case 
$g(s)/s=s^{p-2}$ and $\lambda$ is ``dimensionally comparable'' to $|Du|$. Observe that we clearly have $Q_{\rho}^{\lambda}(x_0,t_0)\subset Q_{\rho}(x_0,t_0)$ in any case.

\begin{Lemma}\label{reverseholder2}
 Let $u$ be a weak solution to \eqref{general.equation}$_1$ in $\Omega_T$, assume that \eqref{add.ass} and \eqref{lambdaineq} hold and let $q>0$. Then there exists a constant $c\equiv c(n,g_1,\nu,L,q)$ such that 
 \[
  \biggl(\mint_{Q_{\rho/2}^{\lambda}}(v-k)_+^{2\gamma}\dx\dt\biggr)^{1/(2\gamma)} \leq c\biggl(\mint_{Q_{\rho}^{\lambda}}(v-k)_+^q\dx\dt\biggr)^{1/q}
 \]
for every  $k\geq\lambda^2$.
\end{Lemma}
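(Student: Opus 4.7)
My plan is to apply Lemma \ref{reverseholder} on a nested family of intrinsic cylinders $\sigma Q_\rho^\lambda$ for $\sigma \in [1/2,1]$, rebalance the weight $g(|Du|)/|Du|$ so it becomes essentially a constant multiple of $\theta_\lambda$, and then invoke the self-improvement Lemma \ref{KMlemma}. The first key observation is that on the support of $(v-k)_+$ (where $v>k\geq\lambda^2$, hence $|Du|\geq\lambda$) combined with \eqref{lambdaineq} (giving $|Du|\leq 4\lambda$ on $Q_\rho^\lambda\subset Q_\rho$), the ratio $|Du|/\lambda$ lives in $[1,4]$. Since $s\mapsto g(s)/s$ itself satisfies an Orlicz-type condition with exponents $g_0-2$, $g_1-2$ (derived from \eqref{O.property} exactly as \eqref{O.property3} is derived, via $(sh'/h) = \mathcal{O}_g(s)-1$), it follows that
\[
c^{-1}\theta_\lambda \;\leq\; \frac{g(|Du|)}{|Du|} \;\leq\; c\,\theta_\lambda \qquad\text{on } \{v>k\},
\]
with $c\equiv c(g_0,g_1)$. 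Since every integrand appearing in Lemma \ref{reverseholder} carries a factor $(v-k)_+$, this uniform two-sided bound lets us treat the weight as frozen.

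Next, for $1/2\leq\sigma'<\sigma\leq 1$ pick $\varphi\in C^\infty(\sigma Q_\rho^\lambda)$ equal to $1$ on $\sigma' Q_\rho^\lambda$, vanishing near $\partial_p(\sigma Q_\rho^\lambda)$, with $|D\varphi|\lesssim[(\sigma-\sigma')\min\{1,\theta_\lambda^{1/2}\}\rho]^{-1}$ and $|\partial_t\varphi|\lesssim[(\sigma-\sigma')\min\{1,\theta_\lambda^{-1}\}\rho^2]^{-1}$. Inserting this into \eqref{gammaCaccioppoli} and using the lower bound above on the left-hand side and the upper bound on the right-hand side gives an estimate of the shape
\[
\theta_\lambda \mint_{\sigma' Q_\rho^\lambda}(v-k)_+^{2\gamma}\dx\dt \;\leq\; \frac{c\,\Phi(\theta_\lambda)}{(\sigma-\sigma')^{2\gamma}}\left(\mint_{\sigma Q_\rho^\lambda}(v-k)_+^2\dx\dt\right)^\gamma,
\]
where the $\theta_\lambda$-dependent prefactor, after collecting the contributions of $|\mathcal D|^{2/n}$, $(t_2-t_1)^{\gamma-1}$, $(g(|Du|)/|Du|)^\gamma$ and $|D\varphi|^{2\gamma}$ (resp.\ $|\partial_t\varphi|^\gamma$), collapses to exactly $\theta_\lambda$. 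Concretely, the identities $\min\{1,\theta_\lambda\}\cdot\max\{1,\theta_\lambda\}=\theta_\lambda$ and $\min\{1,\theta_\lambda^{-1}\}\cdot\max\{1,\theta_\lambda\}=1$ produce the desired cancellation. This is the step that I expect to be the most delicate; the cancellation is precisely what the intrinsic cylinder \eqref{cyl.int} was engineered to produce, and it must work in both regimes $\theta_\lambda\geq 1$ and $\theta_\lambda<1$.

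Dividing through by $\theta_\lambda$ and taking the $(2\gamma)$-th root yields the reverse-H\"older inequality
\[
\left(\mint_{\sigma' Q_\rho^\lambda}(v-k)_+^{2\gamma}\dx\dt\right)^{1/(2\gamma)} \;\leq\; \frac{c}{\sigma-\sigma'}\left(\mint_{\sigma Q_\rho^\lambda}(v-k)_+^2\dx\dt\right)^{1/2},
\]
valid for all $1/2\leq\sigma'<\sigma\leq 1$. This is exactly the hypothesis of Lemma \ref{KMlemma}, applied with $d\mu=\dx\dt/|Q_\rho^\lambda|$ (a probability measure, so of finite total mass) and the family $\{\sigma Q_\rho^\lambda\}_{0<\sigma\leq 1}$, which is nested as required. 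The conclusion gives the desired estimate for every $0<q<2$; for $q\geq 2$, the inequality follows at once from H\"older's inequality applied to the $L^2$-average on the right, since $\mint(v-k)_+^2\leq (\mint(v-k)_+^q)^{2/q}$.
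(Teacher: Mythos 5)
Your proposal is correct and follows essentially the same route as the paper: the two-sided freezing of the weight $g(|Du|)/|Du|\approx\theta_\lambda$ on $Q_\rho^\lambda\cap\{v\geq k\}$ (the paper gets $\tfrac14\theta_\lambda\leq g(|Du|)/|Du|\leq c(g_1)\theta_\lambda$ by monotonicity of $g$; your Orlicz-condition argument for $s\mapsto g(s)/s$ gives the same conclusion, and since $g_0>1$ the extra $g_0$-dependence is in fact harmless), the same choice of cut-offs adapted to the intrinsic cylinders, the same cancellation of all $\theta_\lambda$-factors, and then Lemma~\ref{KMlemma} followed by H\"older's inequality for $q\geq2$.
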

\begin{proof}
Let $1/2\leq\sigma'<\sigma\leq 1$ and choose a cut-off function $\varphi\in C^{\infty}(\sigma Q_{\rho}^{\lambda})$ vanishing in the neighborhood of $\partial_p (\sigma Q_{\rho}^{\lambda})$ such that $0\leq\varphi\leq 1$, $\varphi = 1$ in $\sigma' Q_{\rho}^{\lambda}$, and 
\[
 |D\varphi|\leq\frac{c}{\rho(\sigma-\sigma')}\min\big\{1, \theta_{\lambda}\big\}^{-1/2}, \qquad 
 \left|\partial_t\varphi\right|\leq\frac{c}{\rho^2(\sigma-\sigma')^2}\min\big\{1, \theta_{\lambda}^{-1}\big\}^{-1}.
\]
Observe that by the inclusion $Q_{\rho}^{\lambda}(x_0,t_0)\subset Q_{\rho}(x_0,t_0)$ and \eqref{lambdaineq} we have
\[
 |Du| \leq  4\lambda\qquad\text{in $Q_{\rho}^{\lambda}$}.
\]
Moreover, we have $|Du| \geq \lambda$  in the support of $(v-k)_+$, since $k\geq\lambda^2$ and $v=|Du|^2$. Thus, by using the properties of $g$ we obtain 
\begin{equation}\label{rehomog}
\frac14\theta_{\lambda}\leq\frac{g(|Du|)}{|Du|}\leq c(g_1)\theta_{\lambda} 
\end{equation}
in $Q_{\rho}^{\lambda}\cap\{v\geq k\}$.
Now Lemma~\ref{reverseholder} yields 
\begin{align*}
  \mint_{\sigma'Q_{\rho}^{\lambda}}&(v-k)_+^{2\gamma}\dx\dt\leq c(n)\, \theta_{\lambda}^{-1}
  \mint_{\sigma Q_{\rho}^{\lambda}}\frac{g(|Du|)}{|Du|}(v-k)_+^{2\gamma}\varphi^{2\gamma}\dx\dt\\
  &\leq c\, \theta_{\lambda}^{-1}\Big|\min\{1, \theta_{\lambda}\}^{1/2}B_{\sigma\rho}\Big|^{2/n}
  \Big(\min\{1, \theta_{\lambda}^{-1}\}(\sigma\rho)^2\Big)^{\gamma-1}\\
  &\qquad\times\biggl(\mint_{\sigma Q_{\rho}^{\lambda}}(v-k)_+^2
  \biggl(\frac{g(|Du|)}{|Du|}|D\varphi|^2+|\partial_t\varphi|\biggr)\dx\dt\biggr)^{\gamma}\\
  &\leq c\, \theta_{\lambda}^{-1}\min\{1, \theta_{\lambda}\}\min\{1, \theta_{\lambda}^{-1}\}^{\gamma-1}\rho^{2\gamma}\\
  &\qquad\times\biggl(\frac{ \theta_{\lambda}\min\{1, \theta_{\lambda}\}^{-1}
  +\min\{1, \theta_{\lambda}^{-1}\}^{-1}}{\rho^2(\sigma-\sigma')^2}
  \mint_{\sigma Q_{\rho}^{\lambda}}(v-k)_+^2\dx\dt\biggr)^{\gamma}\\
  &= \frac{c}{(\sigma-\sigma')^{2\gamma}}\biggl(\mint_{\sigma Q_{\rho}^{\lambda}}(v-k)_+^2\dx\dt\biggr)^{\gamma}.
\end{align*}
This is to say
\[
\biggl(\mint_{\sigma'Q_{\rho}^{\lambda}}(v-k)_+^{2\gamma}\dx\dt\biggr)^{1/(2\gamma)}
 \leq\frac{c}{\sigma-\sigma'}\biggl(\mint_{\sigma Q_{\rho}^{\lambda}}(v-k)_+^2\dx\dt\biggr)^{1/2},
\]
where the constant $c$ depends only on $n,g_1,\nu,L$. 

Next we use Lemma~\ref{KMlemma} with $w=(v-k)_+$ and 
$d\mu=\frac{1}{|Q_{\rho}^{\lambda}|}\dx\dt$. This gives for every $0<q<2$ a constant $c\equiv c(n,g_1,\nu,L,q)$ such that 
\[
 \biggl(\mint_{Q_{\rho/2}^{\lambda}}(v-k)_+^{2\gamma}\dx\dt\biggr)^{1/(2\gamma)} \leq c\biggl(\mint_{Q_{\rho}^{\lambda}}(v-k)_+^q\dx\dt\biggr)^{1/q};
\]
the case $q\geq2$ now follows from H\"older's inequality.
\end{proof}

Iterating the previous result yields the following pointwise estimate.

\begin{Proposition}\label{weakmax}
 Let $u$ be a weak solution to \eqref{general.equation} in $\Omega_T$ and assume that \eqref{add.ass} holds. Then for every $q>0$ there exists a constant $c\equiv c(n,g_1,\nu,L,q)$ such that 
 \[
  |Du(x_0,t_0)|\leq \lambda+c\,\biggl(\mint_{Q_\rho^{\lambda}(x_0,t_0)} \big(|Du|^2-\lambda^2\big)_+^q\dx\dt\biggr)^{1/(2q)}
 \]
holds for every $\lambda$ satisfying \eqref{lambdaineq}.
\end{Proposition}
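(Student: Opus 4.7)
The plan is to execute a De~Giorgi iteration at the vertex $(x_0,t_0)$ on a shrinking sequence of intrinsic cylinders $Q_{\rho_i}^\lambda$, driven by the reverse H\"older inequality of Lemma~\ref{reverseholder2}. Since for any $q_0\in(0,2\gamma)$ Jensen's inequality gives $\big(\mint_{Q_\rho^\lambda}(v-\lambda^2)_+^{q_0}\dx\dt\big)^{1/(2q_0)}\leq\big(\mint_{Q_\rho^\lambda}(v-\lambda^2)_+^q\dx\dt\big)^{1/(2q)}$ whenever $q\geq q_0$, it suffices to prove the estimate for $q\in(0,2\gamma)$; in that range the H\"older exponents used below are well-defined.

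Fix such a $q$ and set, for $i\in\N_0$,
\[
 \rho_i := \tfrac{\rho}{2}(1+2^{-i}), \qquad k_i := \lambda^2 + K(1-2^{-i}), \qquad Y_i := \mint_{Q_{\rho_i}^\lambda}(v-k_i)_+^q\dx\dt,
\]
where $K>0$ is to be chosen at the end. Every $k_i\geq\lambda^2$, so Lemma~\ref{reverseholder2} is at our disposal on every $Q_{\rho_i}^\lambda$. The goal is the recursion
\[
 Y_{i+1} \leq c_1\, B^i\, K^{-q\delta}\, Y_i^{1+\delta}, \qquad \delta := 1-\frac{q}{2\gamma}>0,
\]
with $c_1,B$ depending only on $n,g_1,\nu,L,q$. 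To derive it I would start from the pointwise bound $(v-k_{i+1})_+\leq (v-k_i)_+\chi_{\{v>k_{i+1}\}}$, apply H\"older with conjugate exponents $2\gamma/q$ and $2\gamma/(2\gamma-q)$, control the resulting $L^{2\gamma}$-average by $c\,2^{\alpha i}Y_i^{1/q}$ via the flexible-scale version of Lemma~\ref{reverseholder2} (easily extracted from its proof, where the factor $(\sigma-\sigma')^{-\xi}$ supplied by Lemma~\ref{KMlemma} produces a constant of this form when passing between the nearby scales $\rho_{i+1}$ and $\rho_i$), and bound the distribution ratio $|\{v>k_{i+1}\}\cap Q_{\rho_{i+1}}^\lambda|/|Q_{\rho_{i+1}}^\lambda|$ by Chebyshev's inequality on $Q_{\rho_i}^\lambda$ together with $k_{i+1}-k_i=K 2^{-i-1}$.

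The standard geometric-sequence iteration lemma (see e.g.\ \cite[Ch.~I, Lem.~4.1]{DiBenedetto}) then yields $Y_i\to 0$ provided $Y_0\leq c_1^{-1/\delta}B^{-1/\delta^2}K^{q}$, which, after isolating $K$, amounts to the choice
\[
 K := c_\ast\Big(\mint_{Q_\rho^\lambda}(v-\lambda^2)_+^q\dx\dt\Big)^{1/q}
\]
for some $c_\ast\equiv c_\ast(n,g_1,\nu,L,q)$ large enough. Consequently $v\leq\lambda^2+K$ almost everywhere on $Q_{\rho/2}^\lambda$, and the continuity of $Du$ assumed in \eqref{add.ass} promotes this to the pointwise bound $v(x_0,t_0)\leq\lambda^2+K$, since $(x_0,t_0)\in\overline{Q_{\rho/2}^\lambda}$. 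The subadditivity $\sqrt{a+b}\leq\sqrt{a}+\sqrt{b}$ finally gives $|Du(x_0,t_0)|\leq\lambda+K^{1/2}$, which is the desired inequality.

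I expect the main technical obstacle to lie in the careful bookkeeping of constants inside the recursion: one must track the precise $2^{\alpha i}$ dependence emerging from the flexible-scale reverse H\"older inequality and balance it against the factor $K^{-q\delta}$ so that the smallness hypothesis of the iteration lemma translates cleanly into the explicit choice $K\asymp Y_0^{1/q}$. The range restriction $q<2\gamma$ needed for the H\"older step is bypassed by the preliminary Jensen reduction, and the hypothesis $k\geq\lambda^2$ in Lemma~\ref{reverseholder2} is automatic since $k_i\geq\lambda^2$ throughout the iteration.
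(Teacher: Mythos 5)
Your argument is correct, and it is the same basic strategy as the paper's: a De~Giorgi iteration at the vertex $(x_0,t_0)$ on intrinsic cylinders, driven by Lemma~\ref{reverseholder2}, with the final choice $K\asymp Y_0^{1/q}$ and a Jensen (resp.\ H\"older) reduction to handle large $q$. The details of the iteration geometry differ, however, in a way worth spelling out. The paper halves the radius at each step, $\rho_j=2^{-j}\rho$, so that Lemma~\ref{reverseholder2} applies \emph{as stated} between $Q_{\rho_{j+1}}^\lambda$ and $Q_{\rho_j}^\lambda$ with a constant independent of $j$; the whole geometric gain then comes from the pointwise inequality $(v-k_j)_+^{2\gamma}\geq(k_{j+1}-k_j)^{2\gamma-q}(v-k_{j+1})_+^q\chi_{\{v\geq k_{j+1}\}}$, producing a recursion with exponent $2\gamma/q$ on $Y_j$, and the conclusion at $(x_0,t_0)$ follows by Lebesgue differentiation since the cylinders shrink to the vertex. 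You instead let $\rho_i\searrow\rho/2$, which forces you to invoke a flexible-scale version of the reverse H\"older inequality between the nearby scales $\rho_{i+1}$ and $\rho_i$ (correctly identified by you as extractable from the proof, i.e.\ from re-running Lemma~\ref{KMlemma} on $Q_{\rho_i}^\lambda$ with $\sigma=\rho_{i+1}/\rho_i$), and you split via H\"older plus Chebyshev, producing the exponent $1+\delta=2-q/(2\gamma)$ and an extra $B^i$ from the $(1-\sigma)^{-\xi}$ factor. Both recursions close, and your use of continuity of $Du$ to pass from the a.e.\ bound on $Q_{\rho/2}^\lambda$ to the pointwise value at the vertex is legitimate (the paper's Lebesgue-differentiation step also secretly relies on continuity of $v$). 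The net effect is that your route is slightly heavier: the paper's halving scheme uses Lemma~\ref{reverseholder2} off the shelf and avoids the $B^i$ bookkeeping you flag as the main obstacle, while yours requires reopening the proof of the reverse H\"older to extract a variant not in its statement. Your reduction range $q<2\gamma$ versus the paper's $q<2$ is immaterial, since both are followed by the same monotonicity-in-$q$ argument.
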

\begin{proof}
 The idea is to apply De Giorgi's iteration method with the aid of Lemma~\ref{reverseholder2}. Let us first consider the case $0<q<2$. To this end, choose for $j\in\N_0$ 
\[
 \rho_j = 2^{-j}\rho, \qquad k_j = \lambda^2 + (1-2^{-j})d,
\]
where $d>0$ is to be determined later. Observe that $\rho_0=\rho$, $k_0=\lambda^2$, and $\rho_j$ decreases to zero and $k_j$ increases 
to $\lambda^2+d$ as $j$ tends to infinity; clearly $k_j\geq\lambda^2$. Denote $Q_j:=Q_{\rho_j}^{\lambda}(x_0,t_0)$ and 
\[
 Y_j:=\biggl(\mint_{Q_j}(v-k_j)_+^q\dx\dt\biggr)^{1/q}\qquad\text{for $j\in\N_0$. }
\]
By Lemma~\ref{reverseholder2} we have 
\[
 \biggl(\mint_{Q_{j+1}}(v-k_j)_+^{2\gamma}\dx\dt\biggr)^{1/(2\gamma)}\leq c\biggl(\mint_{Q_j}(v-k_j)_+^q\dx\dt\biggr)^{1/q},
\]
and since $k_{j+1}>k_j$ implies 
\[
 (v-k_j)_+^{2\gamma} \geq (k_{j+1}-k_j)^{2\gamma-q}(v-k_{j+1})_+^q\chi_{\{v\geq k_{j+1}\}},
\]
we obtain  
\[
 Y_{j+1}\leq \frac{c}{(k_{j+1}-k_j)^{\beta}}\biggl(\mint_{Q_{j+1}}(v-k_j)_+^{2\gamma}\dx\dt\biggr)^{2\gamma/q}\leq c^\ast d^{-\beta} 2^{\beta j}Y_j^{1+\beta},
\]
 for every $j\in\N_0$, where $\beta:=2\gamma/q-1>0$ and $c^\ast\equiv c^\ast(n,g_1,\nu,L,q)$. Then a standard hyper-geometric iteration lemma implies $Y_j\to 0$ as $j\to \infty$, provided that 
 \[
 Y_0\leq (2c^\ast)^{-\frac1\beta}d
 \]
 and this can be guaranteed by choosing 
 \[
  d=(2c^\ast)^{\frac1\beta}\biggl(\mint_{Q_\rho^{\lambda}(x_0,t_0)} \big(v-\lambda^2\big)_+^q\dx\dt\biggr)^{1/q}.
 \]
Now Lebesgue's differentiation theorem yields 
\begin{align*}
 \big(v(x_0,t_0)-\big(\lambda^2+d\big)\big)_+=\lim_{j\to\infty}\biggl(\mint_{Q_j}\big(v-\big(\lambda^2+d\big)\big)_+^q\dx\dt\biggr)^{1/q}
 \leq \lim_{j\to\infty}Y_j=0, 
\end{align*}
which implies, recalling the choice of $d$, 
\[
 v(x_0,t_0)\leq \lambda^2 + c\biggl(\mint_{Q_\rho^{\lambda}(x_0,t_0)}\big(v-\lambda^2\big)_+^q\dx\dt\biggr)^{1/q}.
\]
The case $q\geq2$ follows again by H\"older's inequality.
\end{proof}

\section{Approximation}\label{approximation}

In this section we regularize the equation in order to apply the results of the previous section and show that the gradient of the solution to the regularized equation is uniformly bounded. Then all we have left to prove is that the approximating solutions converge to a function that solves the original equation.

\vs

To this end, define for $\eps\in(0,1)$ 
\begin{equation}\label{regularization.A}
\A_\eps(\xi):=(\phi_\eps\ast\A)(\xi)+\eps \big(1+|\xi|\big)^{\widetilde{g}_1-2}\xi, 
\end{equation}
where $\phi_\eps(\xi)=\phi({\xi/\eps})/{\eps^n}$; $\phi$ is a standard mollifier with $\int_{\R^n}\phi\dx=1$. 
That is, we mollify the vector field $\A$ and perturb it with the nondegenerate $\widetilde{g}_1$-Laplacian, where $\widetilde{g}_1>\max\{g_1,2\}$; we can take for example $\widetilde g_1 := g_1+1$. It is straightforward to see that $\A_\eps$ satisfies \eqref{assumptionsO} with $g$ replaced by
\begin{equation}\label{geps}
g_\eps(s):=\frac{g(s+\eps)}{s+\eps}s+\eps (1+s)^{\widetilde{g}_1-2}s 
\end{equation}
and $L,\nu$ replaced by $\widetilde L=c(n,g_1)L,\widetilde\nu={\nu}/{c(n,g_1)}$, see also Paragraph \ref{weak.assumptions}. Now the key point is that $\mathcal O_{g_\eps}$ can be bounded {\em independently of} $\eps$. Indeed, we have 
\[
\widetilde{g}_0-1\leq \mathcal O_{g_\eps}(s)\leq\widetilde{g}_1-1,
\]
where $\widetilde{g}_0:= \min\{g_0,2\}$. Note that $g_\eps$ also satisfies the lower bound in \eqref{grow.below.g}, since $g_\eps(s)\geq g(s)/2$ for $s\geq 1$. 

\vs

Let $u_\eps\in V^{2,\widetilde g_1}(\Omega_T)\cap C^0(\overline\Omega_T)$ be the solution to the Cauchy-Dirichlet problem
\begin{equation}\label{approx.eq}
\begin{cases}
 \partial_tu_\eps-\divergence \A_\eps(Du_\eps)=0\qquad&\text{in $\Omega_T$,}\\[3pt]
 u_\eps=\psi&\text{on $\partial_p \Omega_T$;}
\end{cases}
\end{equation}
for existence and uniqueness of such solutions see for instance \cite{LK}.
Since
\[
\eps (1+s)^{\widetilde{g}_1-2} \leq \frac{g_\eps(s)}{s}\leq \frac{c(g_1)}{\eps}(1+s)^{\widetilde{g}_1-2},
\]
in addition to satisfying $g_\eps$-ellipticity and -growth conditions analogous to \eqref{assumptionsO}, the vector field $\A_\eps$ 
also enjoys {\em nondegenerate} $p$-Laplacian growth conditions with $p=\widetilde{g}_1$. Hence, by standard theory, $u_\eps$ satisfies the assumption \eqref{add.ass}, see \cite{DiBenedetto, KMibe}; therefore the results of the previous section are at our disposal for $u\equiv u_\eps$. Note that all the constants will turn out to be effectively {\em independent of} $\eps$.

\vs

Let us then show how to apply the result of the previous section in order to locally bound the gradient of the approximating solution uniformly in terms of $\eps$. Here we also prove an estimate that, once convergence is established, leads to \eqref{loc.lipschitz}. Observe that the assumption \eqref{grow.below.g} is crucial in this proof. We shall shorten $\|\psi\|_{L^\infty}\equiv\|\psi\|_{L^\infty(\partial_p\Omega_T)}$. 

\begin{Proposition}\label{uniform.gradient.bound}
Let $u_\eps$ be a solution to \eqref{approx.eq} and let $\mathcal K\Subset\Omega_T$. Then $\|Du_\eps\|_{L^\infty(\mathcal K)}$ is bounded by a constant depending on $\data,\epsilon,c_\ell,\|\psi\|_{L^\infty}$, and $\dist_{\rm par}(\partial_p\Omega_T,\mathcal K)$, but independent of $\eps$.
\end{Proposition}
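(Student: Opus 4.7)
The plan is to apply the pointwise estimate of Proposition \ref{weakmax} to $u_\eps$ — which enjoys the extra regularity \eqref{add.ass} by standard theory for the nondegenerate $\widetilde g_1$-Laplacian, since $\A_\eps$ has nondegenerate $\widetilde g_1$-growth — and to combine it with Caccioppoli's inequality (Lemma \ref{caccioppoli}) and the iteration Lemma \ref{iteration}, exploiting \eqref{grow.below.g} to absorb nonlinear terms. A key point is that the constants produced by the structure of $\A_\eps$ and $g_\eps$ depend only on $\data$, since $g_\eps$ satisfies \eqref{O.property} with exponents $\widetilde g_0=\min\{g_0,2\}$ and $\widetilde g_1=g_1+1$, and $\widetilde \nu,\widetilde L$ depend only on $\data$.

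First, by the maximum principle (Corollary \ref{maximumprinciple}), $\|u_\eps\|_{L^\infty(\Omega_T)}\leq\|\psi\|_{L^\infty}$ uniformly in $\eps$. Fixing a cylinder $\mathcal K_1$ with $\mathcal K\Subset\mathcal K_1\Subset\Omega_T$ and applying Caccioppoli's inequality with $k=0$, $\pm$ signs, a standard cutoff, and the uniform structure of $\A_\eps$ (together with $G_\eps\geq c G-c$ on $\{|Du_\eps|\geq 1\}$), one obtains
\[
\int_{\mathcal K_1}G(|Du_\eps|)\,dx\,dt\leq C,
\]
with $C$ depending only on $\data$, $\|\psi\|_{L^\infty}$, and $\dist_{\rm par}(\partial_p\Omega_T,\mathcal K)$; the non-negative $\eps$-perturbation in \eqref{regularization.A} only enlarges the left-hand side of Caccioppoli and may be discarded. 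A finite covering of $\mathcal K$ by cylinders $Q_{2R}(x_0,t_0)\Subset\mathcal K_1$ with $R$ comparable to $\dist_{\rm par}(\partial_p\Omega_T,\mathcal K)$ reduces the claim to bounding $\sup_{Q_R(x_0,t_0)}|Du_\eps|$ for one fixed such cylinder.

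Set $\phi(r):=\sup_{Q_r(x_0,t_0)}|Du_\eps|$ for $r\in[R,2R]$. For $R\leq r<s\leq 2R$ and arbitrary $(y,\tau)\in Q_r$, choose $\rho:=(s-r)/2$ and $\lambda:=\max\{1,\phi(s)/4\}$, so that \eqref{lambdaineq} holds on $Q_\rho(y,\tau)\subset Q_s$. From \eqref{grow.below.g} one has $G(\sigma)\geq c\sigma^{\bar p}$ for $\sigma\geq 1$, with $\bar p:=2n/(n+2)+\epsilon$. Picking $q=\bar p/2$ in Proposition \ref{weakmax} and using $(|Du_\eps|^2-\lambda^2)_+^q\leq |Du_\eps|^{\bar p}\leq c(G(|Du_\eps|)+1)$ yields
\[
|Du_\eps(y,\tau)|\leq \lambda+\frac{c}{|Q_\rho^\lambda|^{1/\bar p}}\Big(\int_{\mathcal K_1}[G(|Du_\eps|)+1]\,dx\,dt\Big)^{1/\bar p}.
\]
To estimate $|Q_\rho^\lambda|$ from below, \eqref{cyl.int} with $\theta_\lambda^\eps=g_\eps(\lambda)/\lambda$ and \eqref{O.property} applied to $g_\eps$ give, in each of the regimes $\theta_\lambda^\eps\gtrless 1$,
\[
|Q_\rho^\lambda|^{-1/\bar p}\leq c\,\rho^{-(n+2)/\bar p}\,\lambda^{\mu},
\]
with $\mu<1$; the strict inequality is precisely what the threshold $\bar p>2n/(n+2)$ from \eqref{grow.below.g} delivers.

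Substituting $\lambda\leq 1+\phi(s)/4$, using Young's inequality to split $\phi(s)^\mu$ as $\tfrac12\phi(s)+C$ (possible because $\mu<1$), and taking the supremum over $(y,\tau)\in Q_r$ leads to
\[
\phi(r)\leq \tfrac12\phi(s)+\frac{A}{(s-r)^{\beta}}+B,\qquad R\leq r<s\leq 2R,
\]
with $A,B$ depending on $\data$, $c_\ell$, $\epsilon$, $\|\psi\|_{L^\infty}$, and $\dist_{\rm par}(\partial_p\Omega_T,\mathcal K)$, but independent of $\eps$. Lemma \ref{iteration} then bounds $\phi(R)$ uniformly in $\eps$, which together with the finite covering concludes the proof. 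The main obstacle is the careful bookkeeping needed to prove $\mu<1$ in every regime — degenerate vs.\ singular in $g$, and whether the perturbation $\eps(1+|\xi|)^{\widetilde g_1-2}\xi$ dominates in $g_\eps$ or not: the handling of the perturbation corresponds to the ``alternative argument'' alluded to in the introduction, and the sharp threshold \eqref{grow.below.g} (the analogue of the classical $p>2n/(n+2)$ of the parabolic $p$-Laplacian theory) is precisely what makes the closure of the iteration possible uniformly in $\eps$.
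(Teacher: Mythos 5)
Your high-level skeleton (maximum principle to bound $\|u_\eps\|_{L^\infty}$, Caccioppoli to bound the $G$-energy, Proposition~\ref{weakmax} for a pointwise gradient bound, Young's inequality plus the iteration Lemma~\ref{iteration}) is exactly the paper's, and the singular regime $\theta_\lambda^\eps<1$ is handled correctly: there, pulling $(\theta_\lambda^\eps)^{-n/2}$ out of the average and using $g(\lambda)\geq c_\ell\lambda^{\bar p-1}$ from \eqref{grow.below.g} does give an exponent $\mu<1$, and this is indeed where the threshold $\bar p>2n/(n+2)$ is used.

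However, the crucial claim that $|Q_\rho^\lambda|^{-1/\bar p}\leq c\,\rho^{-(n+2)/\bar p}\lambda^\mu$ with $\mu<1$ ``in each of the regimes'' is false in the degenerate regime $\theta_\lambda^\eps\geq 1$. There, $|Q_\rho^\lambda|^{-1}\approx\rho^{-(n+2)}\theta_\lambda^\eps$ with $\theta_\lambda^\eps=g_\eps(\lambda)/\lambda$, and the only information \eqref{O.property} provides is the upper bound $\theta_\lambda^\eps\leq c\,\lambda^{\widetilde g_1-2}$ (and even in Case~II of the paper, where $g$ dominates, $\theta_\lambda^\eps$ behaves like $\lambda^{g_1-2}$ for, say, $g(s)=s^{g_1-1}$). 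This yields the exponent $(\widetilde g_1-2)/\bar p$, which exceeds $1$ as soon as $g_1$ is large — the assumption \eqref{grow.below.g} is a lower bound on $g$ and gives no control from above on $\theta_\lambda^\eps$, so the ``$\mu<1$'' cannot be salvaged by it. The paper circumvents this precisely by \emph{not} estimating the weight $\theta_\lambda^\eps$ by a power of $\lambda$: it is kept inside the average and reabsorbed into the integrand, using the two-sided bound \eqref{bound.bilateral} (valid on the set $\{|Du_\eps|\geq\lambda\}$ where $(v-\lambda^2)_+$ lives, thanks to \eqref{lambdaineq}) to write $\theta_\lambda^\eps\,(|Du_\eps|^2-\lambda^2)_+\leq c\,\frac{g(|Du_\eps|)}{|Du_\eps|}\,|Du_\eps|^2\leq c\,G(|Du_\eps|)$; this is why $q=1$ is chosen there, and it is what produces the exponent $1/2$ rather than $1/\bar p$ in that case. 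Similarly, your treatment ignores the regime where the viscosity term $\eps(1+\lambda)^{\widetilde g_1-2}$ dominates $g_\eps(\lambda)/\lambda$ (the paper's Case~I): there one must use the \emph{lower} bound on $\eps$ that this very domination implies, together with \eqref{grow.below.g}, to turn the prefactor into $c(1+\lambda)^{1-\min\{\epsilon(n+2)/(2\widetilde g_1),\,2/\widetilde g_1\}}$ times a leftover $\eps$ multiplying the integral; without this, $\theta_\lambda^\eps$ need not be a sub-linear power of $\lambda$. So the single choice $q=\bar p/2$ with a uniform $\lambda^\mu$ bound does not close the iteration, and the case analysis (which is the technical heart of the paper's Proposition~\ref{uniform.gradient.bound}) is missing.
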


\begin{proof}
Let us consider a standard parabolic cylinder $Q_{4R}\equiv Q_{4R}(x^*,t^*)\subset\Omega_T$ and  a subcylinder $Q_\rho(x_0,t_0)\subset Q_{2R}$. Moreover, let $\lambda\geq 1$ be such that 
\begin{equation}\label{lambdaineq2}
 \lambda\geq \frac{1}{4} \sup_{Q_\rho(x_0,t_0)}|Du_\eps|.
\end{equation}
We divide the proof into two cases depending on which term of $g_\eps$ dominates at $\lambda$. 

\subsubsection*{Case I} Assume 
\[
\frac{g(\lambda+\eps)}{\lambda+\eps}\leq\eps(1+\lambda)^{\widetilde g_1-2}.
\]
Setting 
\[
 \theta_{\lambda}^\eps := \frac{g_\eps(\lambda)}{\lambda} =\frac{g(\lambda+\eps)}{\lambda+\eps}+\eps(1+\lambda)^{\widetilde g_1-2}
\]
we clearly have 
\begin{equation}\label{almost.trivial}
 \eps(1+\lambda)^{\widetilde g_1-2}\leq \theta_{\lambda}^\eps
 \leq 2\eps(1+\lambda)^{\widetilde g_1-2}. 
\end{equation}
By applying Proposition~\ref{weakmax} to $u_\eps$ with $q=\widetilde g_1/2$ we obtain 
\begin{align*}
 |Du_\eps(x_0,t_0)|
 &\leq\lambda+c\biggl(\mint_{Q_\rho^{\lambda}(x_0,t_0)} \left(|Du_\eps|^2-\lambda^2\right)_+^{\widetilde g_1/2}\dx\dt\biggr)^{1/\widetilde g_1}\\
 &\leq\lambda+c\biggl(\frac{\max\{1, \theta_{\lambda}^\eps\}}{\min\{1, \theta_{\lambda}^\eps\}^{n/2}}
 \mint_{Q_\rho(x_0,t_0)} |Du_\eps|^{\widetilde g_1}\dx\dt\biggr)^{1/\widetilde g_1},
\end{align*}
since $Q_\rho^{\lambda}(x_0,t_0)\subset Q_\rho(x_0,t_0)$. 

We further distinguish two cases: in the case when $ \theta_{\lambda}^\eps\geq 1$ we get 
\[
 \frac{\max\{1, \theta_{\lambda}^\eps\}}{\min\{1, \theta_{\lambda}^\eps\}^{n/2}} = \theta_{\lambda}^\eps
 \leq 2\eps(1+\lambda)^{\widetilde g_1-2},
\]
while when $0< \theta_{\lambda}^\eps<1$ we have 
\begin{equation}\label{plug.into}
\frac{\max\{1, \theta_{\lambda}^\eps\}}{\min\{1, \theta_{\lambda}^\eps\}^{n/2}} = ( \theta_{\lambda}^\eps)^{-n/2}\leq \left(\eps(1+\lambda)^{\widetilde g_1-2}\right)^{-n/2}=\eps\left(\eps^{1+2/n}(1+\lambda)^{\widetilde g_1-2}\right)^{-n/2};
\end{equation}
in both cases we have used \eqref{almost.trivial}. Since 
\begin{align*}
\eps&\geq \frac{g(\lambda+\eps)}{(1+\lambda)^{\widetilde g_1-2}(\lambda+\eps)}\\
&\geq c_\ell \big(\lambda+\eps\big)^{\frac{n-2}{n+2}+\epsilon-1}\big(1+\lambda\big)^{2-\widetilde g_1}\\
&\geq c_\ell \big(1+\lambda\big)^{2-\widetilde g_1+\min\{\epsilon-4/(n+2),0\}}=:c_\ell \big(1+\lambda\big)^{\bar \eta}
\end{align*}
by \eqref{grow.below.g} and the fact that $\lambda\geq 1$, plugging this estimate into \eqref{plug.into} yields
\begin{align*}
 \left(\eps^{1+2/n}\big(1+\lambda\big)^{\widetilde g_1-2}\right)^{-n/2}
 &\leq c(n,c_\ell)\big(1+\lambda\big)^{-(\bar\eta(1+2/n)+\widetilde g_1-2)n/2}\\
 &\leq c(n,c_\ell)\big(1+\lambda\big)^{\widetilde g_1-\min\{\epsilon(n+2)/2,2\}};
\end{align*}
a direct computation shows indeed the relation between the exponents. Hence we have 
\begin{align*}
 |Du_\eps(x_0,t_0)|&\leq\lambda+c\big(1+\lambda\big)^{1-\min\{\epsilon(n+2)/(2\widetilde g_1),2/\widetilde g_1\}}\\
 &\hspace{4cm}\times  \biggl(\eps\mint_{Q_\rho(x_0,t_0)} |Du_\eps|^{\widetilde g_1}\dx\dt\biggr)^{1/\widetilde g_1}\\
 &\leq 2\lambda+c\biggl(\eps\mint_{Q_\rho(x_0,t_0)} |Du_\eps|^{\widetilde g_1}\dx\dt\biggr)^{\max\left\{\frac{2}{\epsilon(n+2)},\frac12\right\}}+1
\end{align*}
by Young's inequality; we also used $\widetilde g_1>2$.

\subsubsection*{Case II} Suppose then that 
\[
\frac{g(\lambda+\eps)}{\lambda+\eps}>\eps(1+\lambda)^{\widetilde g_1-2}.
\]
Here we have 
\[
\frac{g(\lambda+\eps)}{\lambda+\eps}\leq \theta_{\lambda}^\eps
 \leq 2\,\frac{g(\lambda+\eps)}{\lambda+\eps},
\]
and again by Proposition~\ref{weakmax} 
\begin{align*}
 |Du_\eps(x_0,t_0)|\leq\lambda+c\biggl(\frac{\max\{1, \theta_{\lambda}^\eps\}}{\min\{1, \theta_{\lambda}^\eps\}^{n/2}}
 \mint_{Q_\rho(x_0,t_0)} \big(|Du_\eps|^2-\lambda^2\big)_+^{q}\dx\dt\biggr)^{\frac{1}{2q}}.
  \end{align*}
When $ \theta_{\lambda}^\eps\geq 1$, choosing $q=1$ leads to 
 \begin{align*}
  |Du_\eps(x_0,t_0)|&\leq\lambda+c\biggl(\frac{g(\lambda+\eps)}{\lambda+\eps}
 \mint_{Q_\rho(x_0,t_0)}\big(|Du_\eps|^2-\lambda^2\big)_+\dx\dt\biggr)^{\frac12}\\
 &\leq\lambda+c\biggl(\mint_{Q_\rho(x_0,t_0)}\frac{g(|Du_\eps|)}{|Du_\eps|}\big(|Du_\eps|^2-\lambda^2\big)_+\dx\dt\biggr)^{\frac12}\\
 &\leq\lambda+c\biggl(\mint_{Q_\rho(x_0,t_0)}G(|Du_\eps|)\chi_{\{|Du_\eps|\geq 1\}}\dx\dt\biggr)^{\frac12}.
 \end{align*}
The second inequality stems from the fact that 
\begin{equation}\label{bound.bilateral}
\frac12(\lambda+\eps)\leq |Du_\eps|\leq 4(\lambda+\eps) 
\end{equation}
in the set $Q_\rho(x_0,t_0)\cap\{|Du_\eps|\geq \lambda\}$ by \eqref{lambdaineq2}, while for the last one we used \eqref{O.property3} and the fact that $\lambda\geq1$. 

In the case $0< \theta_{\lambda}^\eps<1$ we choose $q=\epsilon(n+2)/4$ and use \eqref{grow.below.g} and again \eqref{bound.bilateral} to obtain 
 \begin{align*}
  |Du_\eps(x_0,t_0)|&\leq\lambda+c\biggl(\biggl(\frac{g(\lambda+\eps)}{\lambda+\eps}\biggr)^{-\frac n2}
 \mint_{Q_\rho(x_0,t_0)}\big(|Du_\eps|^2-\lambda^2\big)_+^q\dx\dt\biggr)^{\frac1{2q}}\\
 &\leq\lambda+c\biggl(\mint_{Q_\rho(x_0,t_0)}
 \biggl(\frac{|Du_\eps|}{g(|Du_\eps|)}\biggr)^{\frac n2}\big(|Du_\eps|^2-\lambda^2\big)_+^q\dx\dt\biggr)^{\frac1{2q}}\\
 &\leq\lambda+c\biggl(\mint_{Q_\rho(x_0,t_0)}
 |Du_\eps|^{\left(1-\frac{n-2}{n+2}-\epsilon\right)\frac n2+2q}\chi_{\{|Du_\eps|\geq 1\}}\dx\dt\biggr)^{\frac1{2q}}\\
 &=\lambda+c\biggl(\mint_{Q_\rho(x_0,t_0)}
 |Du_\eps|^{1+\frac{n-2}{n+2}+\epsilon}\chi_{\{|Du_\eps|\geq 1\}}\dx\dt\biggr)^{\frac2{\epsilon(n+2)}}\\
 &\leq\lambda+c\biggl(\mint_{Q_\rho(x_0,t_0)}G(|Du_\eps|)\chi_{\{|Du_\eps|\geq 1\}}\dx\dt\biggr)^{\frac{2}{\epsilon(n+2)}};
 \end{align*}
note that
\[
\Big(1-\frac{n-2}{n+2}-\epsilon\Big)\frac n2+2q=\Big(\frac{4}{n+2}-\epsilon\Big)\frac n2+\epsilon\frac{n+2}2=1+\frac{n-2}{n+2}+\epsilon.
\]
Therefore in both cases we have 
\[
 |Du_\eps(x_0,t_0)|
 \leq\lambda+c\bigg(\mint_{Q_\rho(x_0,t_0)}G(|Du_\eps|)\chi_{\{|Du_\eps|\geq 1\}}\dx\dt\bigg)^{\max\left\{\frac12,\frac{2}{\epsilon(n+2)}\right\}}.
\]
Combining Cases I and II and denoting $\tilde \eta:=\max\big\{\frac12,\frac{2}{\epsilon(n+2)}\big\}$ yields 
\begin{align}\label{pointwise.eps}
 &|Du_\eps(x_0,t_0)|\notag\\
 &\qquad\leq 2\lambda+c\,\bigg(\mint_{Q_\rho(x_0,t_0)}\left(G(|Du_\eps|)\chi_{\{|Du_\eps|\geq 1\}}+\eps|Du_\eps|^{\widetilde g_1}\bigg)\dx\dt\right)^{\tilde\eta}+1\notag\\
 &\qquad\leq 2\lambda+c\,\Big(\frac{R}{\rho}\Big)^{(n+2)\tilde\eta}\biggl(\mint_{Q_{2R}}G_\eps(|Du_\eps|)\dx\dt\biggr)^{\tilde\eta}+1,
\end{align}
since 
\[
 G(s) \leq \frac1{g_0}g(s+\eps)(s+\eps)\leq \frac4{g_0}\frac{g(s+\eps)}{s+\eps}s^2 \leq \frac4{g_0}sg_\eps(s) \leq \frac{4\widetilde g_1}{g_0}G_\eps(s)
\]
for $s\geq 1$ and trivially 
\[
 \eps s^{\widetilde g_1} \leq \eps(1+s)^{\widetilde g_1-2}s^2 \leq \widetilde g_1 G_\eps(s).
\]
The constant $c$ in \eqref{pointwise.eps} depends only on $\data$, $\epsilon,c_\ell$. 

\vs

Let us now choose two intermediate cylinders $Q_R\subset Q_r\Subset Q_s\subset Q_{2R}$ and fix
\[
 \lambda := 1 + \frac{1}{4}\|Du_\eps\|_{L^{\infty}(Q_s)} < \infty, \qquad (x_0,t_0)\in Q_r,\qquad \rho:=\frac{s-r}{2}>0.
\]
Clearly $Q_\rho(x_0,t_0)\subset Q_s$ so that \eqref{lambdaineq2} holds. Then \eqref{pointwise.eps} implies 
\begin{multline*}
\|Du_\eps\|_{L^{\infty}(Q_r)}\leq \frac12\|Du_\eps\|_{L^{\infty}(Q_s)}\\+c\,\Big(\frac{R}{s-r}\Big)^{(n+2)\tilde\eta}\biggl(\mint_{Q_{2R}}G_\eps(|Du_\eps|)\dx\dt\biggr)^{\tilde\eta}+3. 
\end{multline*}
Now, by choosing $\phi(r)=\|Du_\eps\|_{L^{\infty}(Q_r)}$, iteration Lemma \ref{iteration} gives 
\begin{equation}\label{Linfty1}
\|Du_\eps\|_{L^{\infty}(Q_R)}\leq c\biggl(\mint_{Q_{2R}}\big[G_\eps(|Du_\eps|)+1\big]\dx\dt\biggr)^{\tilde\eta}.
\end{equation}
At this point, in order to get rid of the dependence on $\eps$ on the right-hand side, the idea is to use the Caccioppoli inequality of Lemma~\ref{caccioppoli} to translate the dependence on $Du_\eps$ to one on $u_\eps$, and the latter in turn into a dependence on $\psi$. Indeed, take $\varphi\in C^\infty(Q_{4R})$ vanishing in a neighborhood of $\partial_p Q_{4R}$ such that $0\leq\varphi\leq 1$, $\varphi = 1$ in $Q_{2R}$, and $|D\varphi|^2+|\partial_t\varphi|\leq c/R^2$. Since 
\[
\sup_{Q_{4R}}|u_\eps|\leq \sup_{\partial_p \Omega_T}|u_\eps| = \sup_{\partial_p \Omega_T}|\psi| \leq \|\psi\|_{L^\infty} 
\]
by the maximum principle, Corollary~\ref{maximumprinciple}, we can estimate by Lemma~\ref{caccioppoli} 
\begin{align}\label{Linfty2}
 \mint_{Q_{2R}}G_\eps(|Du_\eps|)\dx\dt  
 &\leq c \mint_{Q_{4R}}\left[G_\eps(|D\varphi||u_\eps|)+u_\eps^2\left|\partial_t\varphi\right|\right]\dx\dt\notag\\
  &\leq c \bigg(1+\frac{\|\psi\|_{L^\infty}}{R}\bigg)^{\widetilde g_1} + c\left(\frac{\|\psi\|_{L^\infty}}{R}\right)^2\notag\\
  &= c\big(\data,\epsilon,c_\ell,\|\psi\|_{L^\infty},R\big).
 \end{align}
Note that the constant does not depend on $\eps$. Therefore we conclude the proof of the Proposition, modulo a standard covering argument.
\end{proof}

\subsection{A uniform interior modulus of continuity via Lipschitz regularity}

In this section we prove that the approximating solutions $u_\eps$ are equicontinuous in the interior of the domain; in particular we shall show their equi-Lipschitz regularity with respect to the parabolic metric.

\begin{Proposition}\label{int.unif.continuity}
Let $u_\eps$ be a solution to \eqref{approx.eq}. Then $u_\eps\in {\rm Lip}({1,1/2})(\Omega_T)$ locally, uniformly in $\eps$; this is to say, for every subcylinder $\mathcal K\Subset\Omega_T$ there exists a constant $c$ depending on $\data,\epsilon,c_\ell, \|\psi\|_{L^\infty}$, and $\dist_{\rm par}(\partial_p\Omega_T,\mathcal K)$ such that
\begin{equation}\label{almost.uniform.Lipschitz}
|u_\eps(x,t)-u_\eps(y,s)|\leq c\,\dist_{\rm par}\big((x,t),(y,s)\big)
\end{equation}
for every $(x,t),(y,s)\in\mathcal K$ and for every $\eps\in(0,1)$.
\end{Proposition}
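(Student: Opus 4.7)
The proof decomposes naturally into a spatial Lipschitz estimate and a temporal $1/2$-Hölder estimate, both uniform in $\eps$; the parabolic Lipschitz conclusion then follows from the triangle inequality.

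First I would enlarge $\mathcal K$ to $\mathcal K\Subset\mathcal K'\Subset\Omega_T$ so that $\dist_{\rm par}(\partial_p\mathcal K',\mathcal K)$ is comparable to $\dist_{\rm par}(\partial_p\Omega_T,\mathcal K)$. By Proposition~\ref{uniform.gradient.bound} there is a constant $M$, with the allowed dependencies and independent of $\eps$, such that $\|Du_\eps\|_{L^\infty(\mathcal K')}\leq M$; this already gives $|u_\eps(x,t)-u_\eps(y,t)|\leq M|x-y|$ on $\mathcal K'$. The second ingredient is an $\eps$-uniform bound on $|\A_\eps(Du_\eps)|$: combining \eqref{A.bound} applied to $\A_\eps$ with the crude pointwise bound $g_\eps(s)\leq g(M+\eps)+\eps(1+M)^{\widetilde g_1-1}\leq c(M,\data)$ for $s\leq M$ and $\eps\in(0,1)$, one obtains $\sup_{\mathcal K'}|\A_\eps(Du_\eps)|\leq \widetilde M$ with $\widetilde M$ independent of $\eps$.

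For the temporal estimate, fix $(x_0,t_0),(x_0,s_0)\in\mathcal K$ with $s_0<t_0$ and set $\rho:=\min\{\sqrt{t_0-s_0},r_0\}$ with $r_0:=\tfrac12\dist_{\rm par}(\partial_p\mathcal K',\mathcal K)$. Pick $\eta\in C_c^\infty(B_\rho(x_0))$ with $0\leq\eta\leq 1$, $\eta\equiv1$ on $B_{\rho/2}(x_0)$, $|D\eta|\leq c/\rho$, and $\int\eta\dx\geq c_n\rho^n$. Testing \eqref{Steklov.formulation} for $u_\eps$ with $\eta$, integrating over $\tau\in(s_0,t_0)$, and passing $h\to 0$ (legitimate since $u_\eps\in C^0(\overline{\Omega_T})$) yields
\[
\int_{B_\rho(x_0)}\eta(x)\bigl[u_\eps(x,t_0)-u_\eps(x,s_0)\bigr]\dx = -\int_{s_0}^{t_0}\!\int_{B_\rho(x_0)}\bigl\langle\A_\eps(Du_\eps),D\eta\bigr\rangle\dx\,d\tau .
\]
The right-hand side is controlled by $\widetilde M(t_0-s_0)\rho^{n-1}\leq c\rho^{n+1}$, while the spatial Lipschitz estimate shows that the left-hand side differs from $[u_\eps(x_0,t_0)-u_\eps(x_0,s_0)]\int\eta\dx$ by at most $2M\rho\int\eta\dx$. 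Dividing by $\int\eta\dx\geq c_n\rho^n$ produces $|u_\eps(x_0,t_0)-u_\eps(x_0,s_0)|\leq c(M+\widetilde M)\rho$ in the regime $\sqrt{t_0-s_0}\leq r_0$; in the opposite regime the desired bound is trivial from $\|u_\eps\|_{L^\infty(\Omega_T)}\leq\|\psi\|_{L^\infty}$ and $\sqrt{t_0-s_0}>r_0$. In either case $|u_\eps(x_0,t_0)-u_\eps(x_0,s_0)|\leq c\sqrt{t_0-s_0}$ with $c$ independent of $\eps$.

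Combining the spatial bound and this temporal bound via the triangle inequality gives \eqref{almost.uniform.Lipschitz}. The only non-routine point is the $\eps$-uniform control of $|\A_\eps(Du_\eps)|$ on $\mathcal K'$: it relies decisively on the specific form of the regularization \eqref{regularization.A} together with the uniform gradient bound of Proposition~\ref{uniform.gradient.bound}. Everything else is the classical $(1,\tfrac12)$ scaling argument for parabolic equations.
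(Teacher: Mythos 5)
Your proof is correct and follows essentially the same route as the paper's: both rest on the uniform gradient bound of Proposition~\ref{uniform.gradient.bound} for the spatial Lipschitz estimate and the $\eps$-uniform bound on $\A_\eps(Du_\eps)$, and then use the equation (integrated against a spatial cutoff over a time interval, i.e.\ effectively the divergence theorem) to control the temporal variation, with the spatial gradient bound absorbing the difference between point values and spatial averages. The paper phrases the temporal step as a bound on the oscillation of the slicewise average $(u_\eps)_{B_r}(\tau)$ followed by a three-term splitting, while you work directly with the difference at a fixed spatial point and a smooth cutoff; these are cosmetic variants of the same argument, and both handle the large-gap case via the maximum principle in the same way.
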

\begin{proof}
Fix an intermediate set $\mathcal K'$ such that $\mathcal K\Subset \mathcal K'\Subset\Omega_T$ and 
\[
\dist_{\rm par}(\hat z,\partial_p\Omega_T)=\dist_{\rm par}(\mathcal K,\partial_p\Omega_T)/2=:d/2
\]
 for every $\hat z\in\partial_p\mathcal K'$. Take also a cylinder $Q_r(x_0,t_0)\subset\mathcal K'$ with $(x_0,t_0)\in\mathcal K$; this will happen for instance if $r\leq d/2$. Since $Du_\eps$ is continuous, by applying the divergence theorem and using the bound for $\A_\eps$ in \eqref{A.bound} we infer
\begin{align*}
\mint_{B_r(x_0)}u_\eps(\cdot,\tau)\dx\biggr|_{\tau=t_1}^{t_2}&=\frac nr\int_{t_1}^{t_2}\mint_{\partial {B_r(x_0)}}\Big\langle\A_\eps(Du_\eps),\frac{x-x_0}{|x-x_0|}\Big\rangle\,d\mathcal H^{n-1}\dt  \\
&\leq \frac cr\int_{t_1}^{t_2}\mint_{\partial {B_r(x_0)}}g_\eps(Du_\eps)\,d\mathcal H^{n-1}\dt
\end{align*}
for all $t_0-r^2< t_1\leq t_2<t_0$, where $\mathcal H^{n-1}$ stands for the $(n-1)$-dimensional Hausdorff measure.  We thus estimate
\begin{align*}
\osc_{\tau\in(t_0-r^2,t_0)} (u_\eps)_{B_r(x_0)}(\tau)&=\sup_{t_0-r^2<t_1\leq t_2<t_0}\biggl|\mint_{B_r(x_0)}u_\eps(\cdot,\tau)\dx\biggr|_{\tau=t_1}^{t_2}\biggr|\\
&\leq\frac cr\int_{t_0-r^2}^{t_0}\mint_{\partial {B_r(x_0)}}g_\eps(Du_\eps)\,d\mathcal H^{n-1}\dt \notag\\
&\leq c\,r\left(1+\|Du_\eps\|_{L^\infty(Q_r(x_0,t_0))}\right)^{\widetilde g_1-1}\notag\\
&\leq c\,r\left(1+\|Du_\eps\|_{L^\infty(\mathcal K')}\right)^{\widetilde g_1-1}.\notag\notag
\end{align*}
Now by Proposition~\ref{uniform.gradient.bound}, in particular by \eqref{Linfty1}-\eqref{Linfty2}, we have
\begin{equation}\label{osc.slicewise}
\osc_{\tau\in(t_0-r^2,t_0)} (u_\eps)_{B_r(x_0)}(\tau)\leq c\big(\data, c_\ell,\epsilon,\|\psi\|_{L^\infty},d\big)\,r.
\end{equation}
At this point we simply split for $(x_1,t_1),(x_2,t_2)\in Q_r(x_0,t_0)$
\begin{multline*}
|u_\eps(x_1,t_1)-u_\eps(x_2,t_2)|\leq  \Big|u_\eps(x_1,t_1)-\mint_{B_r(x_0)}u_\eps(\cdot,t_1)\dx\Big|\\
+\bigg|\mint_{B_r(x_0)}u_\eps(\cdot,t_1)\dx-\mint_{B_r(x_0)}u_\eps(\cdot,t_2)\dx\bigg|+\Big|u_\eps(x_2,t_2)-\mint_{B_r(x_0)}u_\eps(\cdot,t_2)\dx\Big|.
\end{multline*}
While in order to bound the second term we shall use \eqref{osc.slicewise}, the first and last terms can be estimated using the mean value theorem as follows:
\begin{align*}
 \Big|u_\eps(x_i,t_i)-\mint_{B_r(x_0)}u_\eps(\cdot,t_i)\dx\Big|&\leq \mint_{B_r(x_0)}\big|u_\eps(x_i,t_i)-u_\eps(x,t_i)\big|\dx\\
 &\leq 2r\,\|Du_\eps\|_{L^\infty(\mathcal K')},
\end{align*}
for $i\in\{1,2\}$. Therefore, using again Proposition~\ref{uniform.gradient.bound}, we have 
\begin{equation}\label{almost.final.oscillation}
\osc_{Q_r(x_0,t_0)}u_\eps \leq c\,r
\end{equation}
with $c$ as in \eqref{osc.slicewise}, in particular {\em not depending} on $\eps$. To conclude the proof, for $(x_1,t_1)$, $(x_2,t_2)\in\mathcal K$, we simply check whether $\dist_{\rm par}\big((x_1,t_1),(x_2,t_2)\big)\leq d/4$ holds true or not; if so, then there exists a cylinder $Q_r(x_0,t_0)$ with $r=\dist_{\rm par}\big((x_1,t_1),(x_2,t_2)\big)$ such that $(x_1,t_1),(x_2,t_2)\in Q_r(x_0,t_0)$ and we can apply \eqref{almost.final.oscillation} that directly yields \eqref{almost.uniform.Lipschitz}. If on the other hand $\dist_{\rm par}\big((x_1,t_1),(x_2,t_2)\big)> d/4$, then, again simply using the maximum principle, we have
\begin{equation}\label{last.modification}
|u_\eps(x,t)-u_\eps(y,s)|\leq 2\|u_\eps\|_{L^\infty(\Omega_T)}\leq 8\,\frac{\dist_{\rm par}\big((x_1,t_1),(x_2,t_2)\big)}{d}\|\psi\|_{L^\infty}; 
\end{equation}
the proof is concluded.
\end{proof}

\begin{Remark}\label{rem.2}
Notice that, tracking the dependence on $d$ of the constant in Proposition \ref{int.unif.continuity} and in turn the dependence on $R$ of estimate \eqref{Linfty2}, and also slightly modifying the previous proof, we deduce that  estimate \eqref{almost.uniform.Lipschitz} can be rewritten as 
\begin{equation}\label{improved.est.interior}
|u_\eps(x,t)-u_\eps(y,s)|\leq \frac{c}{d_{z,w}^\gamma}\,\dist_{\rm par}\big((x,t),(y,s)\big), 
\end{equation}
for an exponent $\gamma\equiv \gamma(n,g_1,\epsilon)\geq1$ and a constant $c$ depending only on $\data,\epsilon,c_\ell$, $\|\psi\|_{L^\infty}$, with $z=(x,t), w=(y,s)$ and accordingly
\[
d_{z,w}:=\min\big\{\dist_{\rm par}(z,\partial_p\Omega_T),\dist_{\rm par}(w,\partial_p\Omega_T),1\big\}. 
\]
Indeed, if $\dist_{\rm par}(z,w)\leq d_{z,w}/8$, then we can apply the argument in the first part of the proof of Proposition \ref{int.unif.continuity} with $r=\dist_{\rm par}(z,w)$ to get (suppose $s\leq t$)
\[
|u_\eps(z)-u_\eps(w)|\leq\osc_{Q_r(z)}u_\eps \leq \frac c{d_{z,w}^\gamma}\,\dist_{\rm par}(z,w), 
\]
where $\gamma=\widetilde g_1(\widetilde g_1-1)\tilde\eta$, since we have $Q_r(z)\subset Q_{d_{z,w}/8}(z)$, $Q_{d_{z,w}/2}(z)\subset \Omega_T$ and so
\[
\|Du_\eps\|_{L^\infty(Q_r(z))}\leq \|Du_\eps\|_{L^\infty(Q_{d_{z,w}/8}(z))}\leq \frac{c}{d_{z,w}^{\widetilde g_1\tilde\eta}}
\]
The case where $d_{z,w}<8\,\dist_{\rm par}(z,w)$ can be approached exactly as in \eqref{last.modification}.
\end{Remark}

\section{Continuity at the  boundary}\label{boundary.continuity}

In this section we prove that the solution to the approximating problem \eqref{approx.eq} is continuous up to the boundary independently of $\eps$ by building an explicit barrier. We do not want to enter the details of the theory and the general relation between existence of barriers and regularity of the boundary points; the interested reader can see the nice paper \cite{LK} for the evolutionary $p$-Laplacian, while \cite{HKM, Lqnotes} summarize the results in the elliptic setting. 

We shall begin with the proof of the continuity at the lateral boundary; here we shall give all the details needed. For the continuity at the initial boundary we shall however only sketch the proof, which on the other hand is very similar and easier than the lateral case. Again, we will prove the existence of a uniform (in the sense that it will be independent of $\eps$) modulus of continuity for $u_\eps$; in the last section we shall show that this modulus is easily inherited by the limit of $u_\eps$.

\vs

Let us begin with the construction of an explicit barrier at the lateral boundary. Due to a scaling argument that will be clear soon it is enough to consider a very special case. 

\subsection{An explicit construction of a supersolution at the boundary}\label{barrier.paragraph}
We define the function 
\[
 v^+(x,t):=|x'|^2+M\sqrt{x_n}+(2t+1)_-,
\]
where $M\geq 1$ is to be chosen depending on $\data$. We aim to show that $v^+$ is a weak supersolution in 
\[
\mathcal  Q:=\big\{(x,t)\in\R^{n+1}:|x'|\leq 1, \ x_n\in[0,2], \  t\in[-1,0]\big\}.
\]
Simple calculations show that 
\[
\begin{split}
 Dv^+=(2x',M x_n^{-1/2}/2),\qquad \partial_tv^+=-2\chi_{\{-1<t<-1/2\}},\\
 D^2v^+={\rm diag}\,(2,\ldots,2,-M x_n^{-3/2}/4),\qquad
\end{split}
\]
and moreover, since $D^2_{i,j}v^+=0$ whenever $i\neq j$, we have 
\begin{align*}
 \divergence\A(Dv^+)&=\sum_{i=1}^nD_i\A_i(Dv^+)=\sum_{i,j=1}^nD_{\xi_j}\A_i(Dv^+)D^2_{i,j}v^+\\
 &=2\sum_{i=1}^{n-1}D_{\xi_i}\A_i(Dv^+) - \frac M4 D_{\xi_n}\A_n(Dv^+) x_n^{-3/2}.
\end{align*}
The first term we estimate from above using $\eqref{assumptionsO}_2$ and for the second term we can apply $\eqref{assumptionsO}_1$, since $D_{\xi_n}\A_n(Dv^+)=\langle D\A(Dv^+)\hat e_n,\hat e_n\rangle$. Furthermore, if we require $M\geq 2^{3/2}16(n-1)L/\nu$, we obtain 
\begin{align}\label{super.computation}
 \divergence\A(Dv^+)&\leq \Big(2(n-1)L-\frac\nu4 M x_n^{-3/2}\Big)\frac{g(|Dv^+|)}{|Dv^+|}\\\notag
 &\leq -\frac\nu8M x_n^{-3/2}\frac{g(|Dv^+|)}{|Dv^+|}.
\end{align}
Now, observe that since $M\geq 4$ we also get 
\[
 |Dv^+|=\sqrt{4|x'|^2+\big(M x_n^{-1/2}/2\big)^2}\leq Mx_n^{-1/2} 
\]
in $\mathcal Q$. On the other hand, we have 
\[
 |Dv^+|\geq Mx_n^{-1/2}/2\geq 1.
\]
Using these estimates we obtain 
\[
 \frac{g(|Dv^+|)}{|Dv^+|}\geq |Dv^+|^{g_0-2} \geq 
 \begin{cases}
1,&g_0\geq 2 \\[3pt]
\big(Mx_n^{-1/2}\big)^{g_0-2} &g_0<2
\end{cases},
\]
and thus 
\[
 \divergence\A(Dv^+)\leq -\frac{\nu}{8} M^{\min\{g_0,2\}-1}x_n^{-(\min\{g_0,2\}+1)/2}.
\]
The exponent of $x_n$ is negative, so that by choosing $M\equiv M(\data)$ large enough (recall that $g_0>1$), we finally obtain 
\[
 \partial_t v^+-\divergence\A(Dv^+)\geq -2+\frac{\nu}{8}2^{-(\min\{g_0,2\}+1)/2}M^{\min\{g_0,2\}-1} \geq 0. 
\]
It is easy to see that $v^+\in V_{\textrm{loc}}^{2,G}(\mathcal Q)$ and thus $v^+$ is a (weak) supersolution in $\mathcal Q$.

\subsection{A reduction of the oscillation in a significant case}\label{red.particolar}

We set ourselves now in what seems to be a very particular, unitary case; it will be clear soon that, up to a simple rescaling procedure, this will be the significant case for the proof.

\vs

Let $\bar\Omega$ be a bounded $C^{1,\beta}$ domain and $\bar\Omega_T:=\bar\Omega\times(-1,0)$. Suppose that $0\in\partial{\bar \Omega}$ and the orthonormal system where the boundary is a graph is the standard cartesian one, with the direction where $\partial {\bar \Omega}$ is a graph given by $\hat e_n$. We hence have
\[
\partial{\bar \Omega}\cap \{|x'|<1,|x_n|<1\}={\rm graph}\,{\bar\theta},\quad\text{ with \ \ ${\bar\theta}:B_1'(0)\to(-1,1)$\ \  and\ \  ${\bar\theta}(0)=0$}
\]
and ${\bar \Omega}\cap \{|x'|<1,|x_n|<1\}$ is the epigraph of ${\bar\theta}$. Let $\bar u$ be a weak solution to \eqref{general.equation}$_1$ in 
\[
\bar\Omega_T\cap\mathcal Q_1 \qquad \text{with}\qquad \mathcal Q_1:=B'_1\times(-1,1)\times(-1,0)\subset\R^{n+1},
\]
such that $\bar u=\bar \psi$ in $\partial_p\bar\Omega_T\cap\mathcal Q_1$. Moreover, we suppose $\bar \psi(0)=\bar u(0)=0$. Take $\delta\in(0,1)$ to be fixed later. We assume that
\begin{equation}\label{small.boundary}
\text{the graph of ${\bar\theta}$ over $B'_1$ is contained in the cylinder $B'_1\times(-\delta,\delta)$}
\end{equation}
and moreover that 
\begin{equation}\label{osc.psi}
\osc_{\partial{\bar \Omega}\cap B'_1\times(-1,1)}\bar\psi\leq\delta  \qquad \text{and }\qquad \osc_{{\bar\Omega_T}\cap \mathcal Q_1}{\bar u}\leq 1
\end{equation}

Let us take the barrier $v_+$ built in the previous paragraph and shift it in the $\hat e_n$ direction as follows:
\[
v_\delta^+(x',x_n,t):=v^+(x',x_n+\delta,t)+\delta. 
\]
Now $v_\delta^+$ is defined and continuous, in particular,  over the parabolic closure of ${\bar \Omega}_T\cap \mathcal Q_\delta$, where $\mathcal Q_\delta=B_1'\times(-\delta,1)\times  (-1,0)$, and there it is still a supersolution to an equation structurally similar to \eqref{general.equation}$_1$. The aim is to prove that $\bar u \leq v_\delta^+$ on $\partial_p(\bar\Omega_T\cap \mathcal Q_\delta)$ by considering the different pieces:
\begin{itemize}
\item on $[\partial{\bar \Omega}\times(-1,0)]\cap\mathcal Q_\delta$ we estimate
\[
{\bar u}-v_\delta^+ \leq \bar\psi-\delta\leq0
\]
using \eqref{osc.psi}$_1$ and since $v^+\geq0$;
\item on $\big[(\partial B_1'\times[-\delta,1])\cap\overline{\bar \Omega}\big]\times(-1,0)$ we have
\[
{\bar u}-v_\delta^+ \leq 1-1-\delta\leq0,
\]
by \eqref{osc.psi}$_2$ together with  ${\bar u}(0)=0$ and the fact that $v^+\geq1$, since $|x'|=1$;
\item on $\overline{B_1'}\times\{1\}\times(-1,0)$ we have
\begin{equation}\label{perte}
{\bar u}-v_\delta^+ \leq 1- M\leq0, 
\end{equation}
since ${\bar u}\leq1$ as above and on $\{x_n=1\}$ we have $v_\delta^+\geq M\geq1$;
\item finally, on $\overline{{\bar \Omega}\cap(B_1'\times(-\delta,1))}\times\{-1\}$ we again have $v_\delta^+\geq1$ due to the expression of the time-dependent part, and therefore the conclusion again follows.
\end{itemize}
Note that the first three pieces exhaust the lateral boundary of ${\bar \Omega}_T\cap\mathcal Q_\delta$, while the fourth one makes up its initial boundary. Therefore, we have ${\bar u}\leq v_\delta^+$ on the parabolic boundary of ${\bar \Omega}_T\cap\mathcal Q_\delta$ and hence, by Proposition \ref{comparisonprinciple}, ${\bar u}\leq v_\delta^+$ in ${\bar \Omega}_T\cap\mathcal Q_\delta$. Now, if $\delta\leq1/2$, we have
\[
v^+_\delta\leq \delta^2+M(2\delta)^{1/2}\qquad\text{ in $[(B'_\delta\times(-\delta,\delta))\cap{\bar \Omega}]\times(-\delta,0)$.}
\]
 Therefore, if we choose $\delta$ small enough, depending only on $M$ and so ultimately on $\data$, such that $\delta^2+M(2\delta)^{1/2}\leq 1/4$, then we have
\[
\sup_{[(B'_\delta\times(-\delta,\delta))\cap{\bar \Omega}]\times(-\delta,0)}{\bar u}\leq \frac14.
\]
Completely analogously we may consider the subsolution $v_-(x',x_n,t)=-v_+(x',x_n,t)$ to obtain a corresponding bound from below. All in all, we conclude with
\begin{equation}\label{red.osc.onestep}
\osc_{[B_\delta\cap{\bar \Omega}]\times(-\delta,0)}{\bar u}\leq \frac12.
\end{equation}
 
\subsection{Iteration}

Let $R_0\leq \min\{R_\Omega,1\}$ be fixed and let $Q_r^\omega(x_0,t_0)$ be a cylinder not intersecting the initial boundary, with $x_0\in\partial\Omega$, $\omega>0$ and $r\leq R_0$. Since we are supposing $R_0\leq R_\Omega$, we have that the boundary of $\Omega$ can be written as a $C^{1,\beta}$ graph in $B_r$: there exists a unitary vector $\hat e\in\R^n$ such that if we set $T:\R^n\to \R^n$ for the orthogonal transformation that maps $\hat e_n=(0,\dots,0,1)$ into $\hat e$, we have
\[
T^{-1}(\partial\Omega-x_0)\cap \big(B'_r\times(-r,r)\big)={\rm graph}\,\theta
\]
for some $\theta\in C^{1,\beta}(B_r')$ with values in $(-r,r)$. We now start from the assumption
\begin{equation}\label{initial.iteration}
\osc_{\Omega_T\cap Q_r^\omega(x_0,t_0)}u_\eps\leq\omega.
\end{equation}
We define, for $j\in\N$, the quantities
\[
\omega_j:=2^{-j}\omega,\qquad r_{j+1}=\min\big\{\sigma r_j,\overline r_{j+1}\big\},\qquad r_0=r
\]
where $\sigma\in(0,1/2)$ is such that $\sigma\leq\frac\delta{\sqrt2}$ and $(2\sigma)^{g_0}\leq 4(\sqrt2)^{-g_1}\delta$ (see \eqref{inclusion...}), with $\delta\in(0,1/2)$ being the constant defined in the previous paragraph, and $\bar r_j$ is such that
\begin{equation}\label{osc.psi.eps}
\osc_{\partial_{\rm lat}\Omega_T\cap Q_{\overline r_j }^{\omega_j}(x_0,t_0)}\psi\leq \delta\,\omega_j. 
\end{equation}
Note that this is possible, since $\psi$ is continuous so that at $\omega_j$ fixed the map $\rho\mapsto \osc_{Q_\rho^{\omega_j}(x_0,t_0)}\psi$ vanishes as $\rho\to0$. We prove by induction
\begin{equation}\label{inductive.iteration}
\osc_{\Omega_T\cap Q_{r_j}^{\omega_j}(x_0,t_0)}u_\eps\leq\omega_j.
\end{equation}
Now \eqref{inductive.iteration}$_0$ is simply \eqref{initial.iteration}, so we suppose that \eqref{inductive.iteration}$_j$ holds and we prove \eqref{inductive.iteration}$_{j+1}$, for $j\in\N_0$. Rescale $u_\eps$ as follows:
\[
\bar u(x,t):=\frac1\omega_ju_\eps\Big(x_0+\frac{r_j}{\sqrt2}Tx,t_0+\omega_j^2\Big[G\Big(\frac{\sqrt2\omega_j}{r_j}\Big)\Big]^{-1}t\Big)-\frac{u_\eps(x_0,t_0)}{\omega_j}.
\]
This is a solution to an equation structurally similar to \eqref{general.equation}$_1$, see Paragraph \ref{geometry}, in particular in $[(B'_1\times(-1,1))\cap\Omega]\times(-1,0)$, with boundary datum
\[
\bar \psi(x,t):=\frac1\omega_j\psi\Big(x_0+\frac{r_j}{\sqrt2}Tx,t_0+\omega_j^2\Big[G\Big(\frac{\sqrt2\omega_j}{r_j}\Big)\Big]^{-1}t\Big)-\frac{u_\eps(x_0,t_0)}{\omega_j}
\]
and where the boundary of $\bar\Omega:=[\sqrt 2T^{-1}(\Omega-x_0)/r_j]\cap (B'_1\times(-1,1))$ is given by the graph of the function $\bar\theta(x')=\theta(r_jx'/\sqrt 2)/r_j$ over $B'_1$. We have
\[
\osc_{B'_1}|D\bar\theta|=\frac1{\sqrt2}\osc_{B'_{r_j/\sqrt2}}|D\theta|\leq \frac{1}{\sqrt2}\osc_{B'_{R_0}}|D\theta|\leq \frac{R_0^\beta}{\sqrt2}\Theta.
\]
Now we choose $R_0$ small enough so that the right hand side of the chain of inequalities in the above display is smaller than $\delta$, where $\delta$ is the quantity fixed in the previous paragraph. This ensures that \eqref{small.boundary} is satisfied (since $D\theta(0)=D\bar\theta(0)=0=\theta(0)=\bar \theta(0)$). Since all the other assumptions in Paragraph \ref{red.particolar} are satisfied (in particular by our choice of $\bar r_j$), we have estimate \eqref{red.osc.onestep} at hand; therefore \eqref{inductive.iteration}$_{j+1}$ follows by our definition of $r_{j+1}$ and $\omega_{j+1}$. Indeed, scaling back we have
\[
\osc_{\Omega_T\cap \hat{\mathcal Q}_{r_j}}u_\eps\leq \frac12\omega_j\quad \text{with\quad$\hat{\mathcal Q}_{r_j}:=B_{\delta r_j/\sqrt2}(x_0)\times\Big(t_0-\delta\,\omega_j^2\Big[G\Big(\frac{\sqrt2\omega_j}{r_j}\Big)\Big]^{-1},t_0\Big)$}
\]
and by \eqref{GDelta2} and our definition of $\sigma$, we infer
\begin{multline}\label{inclusion...}
\omega_{j+1}^2\Big[G\Big(\frac{\omega_{j+1}}{r_{j+1}}\Big)\Big]^{-1}\leq \frac{(2\sigma)^{g_0}}4\omega_j^2\Big[G\Big(\frac{\omega_j}{r_j}\Big)\Big]^{-1}\\
 \leq \frac\delta{(\sqrt2)^{g_1}}\omega_j^2\Big[G\Big(\frac{\omega_j}{r_j}\Big)\Big]^{-1}\leq \delta\,\omega_j^2\Big[G\Big(\frac{\sqrt2\omega_j}{r_j}\Big)\Big]^{-1}.
\end{multline}
Finally, we note that the lengths of the time intervals also go to zero, that is, the cylinders are shrinking. Indeed, the first inequality in the above computation shows that the ratio of two consecutive time scales is bounded by $(2\sigma)^{g_0}/4$, which is clearly strictly smaller than one.

\subsection{Some quantitative estimates}\label{quantify}
Let us set
\[
\omega:=2\|\psi\|_{L^\infty}+1,
\] 
fix a radius $r<R_0$, and take a point $(x_0,t_0)\in\partial_{\rm lat}\Omega_T$ such that $Q^G_{\max\{1,\omega^{2/g_0-1}\}r}(x_0,t_0)$ does not intersect the initial boundary. Clearly \eqref{initial.iteration} holds by the maximum principle. Now we recall that $\psi$ has the modulus of continuity $\omega_\psi$:
\[
|\psi(x,t)-\psi(y,s)|\leq\omega_\psi\big(\dist_{{\rm par},G}((x,t),(y,s))\big)
\]
for all $(x,t),(y,s)\in\partial_p\Omega_T$. Since 
\begin{equation*}
Q_{\rho}^{\omega_j}(x_0,t_0)\subset  B_\rho(x_0)\times (t_0-[G(2^{-j(1-2/g_1)}\omega^{1-2/g_0}/\rho)]^{-1},t_0), 
\end{equation*}
we have
\[
Q_{\rho}^{\omega_j}(x_0,t_0)\subset Q^G_{A_j\rho}(x_0,t_0),
\]
with $A_j:=\max\{1,2^{j(1-2/g_1)}\omega^{2/g_0-1}\}\geq1$. Thus we see that if we want \eqref{osc.psi.eps} satisfied, it is enough to require
\[
A_j\bar r_j\leq \max\{1,\omega^{2/g_0-1}\}r,
\]
so that $Q^G_{A_j\bar r_j}(x_0,t_0)$ does not intersect the initial boundary, and
\[
 \omega_\psi(\bar r_j)\leq \delta\, A_j^{-1}\omega_j=\delta\min\{2^{-j}\omega,2^{-2j(1-1/g_1)}\omega^{2(1-1/g_0)}\}
\]
by the concavity of $\omega_\psi(\cdot)$. At this point we have \eqref{inductive.iteration} at our disposal, and this will be used noting that in particular we have
\begin{equation*}
Q_{r_j}^{\omega_j}(x_0,t_0)\supset Q^G_{r_j/B_j}(x_0,t_0) 
\end{equation*}
with $B_j:=\max\{1,2^{-j(1-2/g_0)}\omega^{1-2/g_1}\}\geq1$. Hence, for $(x,t)\in \Omega_T\cap Q_{r/B_0}^G(x_0,t_0)$ fixed we find the largest $j\in\N_0$ such that 
\[
\frac{r_{j+1}}{B_{j+1}}\leq \dist_{{\rm par},G}((x,t),(x_0,t_0))<\frac{r_j}{B_j}.
\]
Note that this is possible, since clearly $r_j/B_j\leq r_j \to0$ as $j\to\infty$. At this point 
\[
|u_\eps(x,t)-u_\eps(x_0,t_0)|\leq \osc_{\Omega_T\cap Q_{r_j}^{\omega_j}(x_0,t_0)}u_\eps\leq 2^{-j}\omega.
\]
Let $\{(r_{j+1}/B_{j+1},2^{-j}\omega)\}_{j\in\N_0}$ be a sequence of points in $\R^2$ and call $\omega_u$ the smallest concave function such that $\omega_u(r_{j+1}/B_{j+1})\geq2^{-j}\omega$; note that $\omega_u$ is a modulus of continuity. For instance, one can take the piecewise linear interpolation of the sequence $\{(x_j,y_j)\}_{j\in\N}$ given by $x_j=\max_{k\geq j+1}r_k/B_k, y_j=2^{-j}\omega$, which is component-wise decreasing as $j$ increases. This finally leads to
\begin{equation}\label{boundary.modulus}
|u_\eps(x,t)-u_\eps(x_0,t_0)|\leq  {2^{-j}}\omega\leq \omega_u(r_{j+1}/B_{j+1})\leq \omega_u\big(\dist_{{\rm par},G}((x,t),(x_0,t_0))\big),
\end{equation}
and this holds for $(x,t)\in \Omega_T\cap Q_{r/B_0}^G(x_0,t_0)$. In fact, it also holds for points $(x,t)$ outside $Q_{r/B_0}^G(x_0,t_0)$, since then we have $\dist_{{\rm par},G}((x,t),(x_0,t_0))>r/B_0$ and thus 
\[
 |u_\eps(x,t)-u_\eps(x_0,t_0)|\leq \frac{2B_0}{r}||\psi||_{L^{\infty}} \frac{r}{B_0}\leq c \,\dist_{{\rm par},G}((x,t),(x_0,t_0))
\]
by the maximum principle. Note that the modulus of continuity $\omega_u$ at this point depends on $\data,||\psi||_{L^{\infty}},\omega_\psi$ but also on $r$. 

If now $\psi$ is $\gamma$-H\"older continuous with respect to the $G$-parabolic metric, then we see that it is enough to take $\bar r_j = c(\data,\omega,\gamma)2^{-\eta j}r$ for some $\eta\equiv\eta(g_1,\gamma)$. This yields that the numbers $r_j$ can be written as $\bar\eta^{j}r$ for some $\bar\eta\in(0,1)$. Now the H\"older continuity follows, for instance, similarly to \cite[Chapter III, Lemma 3.1]{DiBenedetto}. 

\subsection{Continuity at the initial boundary}
We begin by modifying the barrier built in Paragraph \ref{barrier.paragraph} to meet the different situations at the initial boundary. We start by considering the case where, before rescaling, we have a solution in a cylinder $B_r(x_0)\times(0,\omega^2/G(\omega/r))$, with $B_r(x_0)\subset\Omega$, equal to $\bar \psi$ over $B_r(x_0)\times\{0\}$; that is, the true case of initial boundary continuity. Later on we shall face the ``corner case'', that is the case of cylinders $B_r(x_0)\times(0,\omega^2/G(\omega/r))$ with $x_0\in\partial\Omega$.

\vs

After rescaling, one sees that it is enough to build a supersolution in $\mathcal Q:=B_1\times(0,1)$. In this case the explicit expression is simply $v^+(x,t):=|x|^{1/2}$. We then have $v^+\in V^{2,G}_{\textrm{loc}}(\mathcal Q)$ and $v^+$ is a supersolution to \eqref{general.equation}$_1$ in $\mathcal Q$. Moreover, if we further suppose that
\[
\osc_{B_1\times\{0\}}\bar\psi\leq\delta,\qquad\osc_{\mathcal Q}{\bar u}\leq 1, \qquad \bar u(0,0)=\bar \psi(0,0)=0
\]
for some $\delta\in(0,1)$, it is easy to see that ${\bar u}\leq v^++\delta$ on $ \partial_p\mathcal Q$. Indeed on $\partial B_1\times(0,1)$ we have $v^+=1$ but $\bar  u\leq1$, while on $\overline{B_1}\times\{0\}$ we have $\bar u=\bar\psi\leq \delta$ and $v^+\geq0$. Therefore we can deduce by Proposition \ref{comparisonprinciple} that ${\bar u}\leq v^++\delta$ in $\mathcal Q$. Now the proof goes on similarly as in Paragraphs \ref{red.particolar} to \ref{quantify}, with possibly new constants $\delta$ and $R_0$. 

\vs

For the ``corner situation'', we are lead to consider a solution in a domain of the type $\mathcal Q:=B_1'\times(-1,1)\times(0,1)$; the supersolution in this case is $v^+=|x'|^2+Mx_n^{1/2}$, with $M$ as in Paragraph \ref{barrier.paragraph}. The fact that the function is a supersolution follows plainly from \eqref{super.computation}. Assuming now that the boundary graph $\bar\theta$ over $B'_1$ takes values in $(-\delta,\delta)$ and 
\[
 \osc_{\partial_p\bar\Omega_T\cap\mathcal Q}\bar\psi\leq\delta,\quad\osc_{\mathcal Q}{\bar u}\leq 1,\quad \bar u(0,0)=\bar \psi(0,0)=0,
\]
we have $\bar u \leq v_\delta^+$ in $\partial_p\bar\Omega_T\cap \mathcal Q$,  since $\bar u=\bar\psi\leq \delta$ there; on the remaining part of the parabolic boundary of $\mathcal Q$ we use the fact that $v^+_\delta$ is larger than one, as in \eqref{perte}. Again, now the proof is similar as above.

In both cases, a scaling and iteration procedure like the one used in Paragraph \ref{quantify} allows us to prove the reduction of oscillation in a sequence of nested cylinders of the type $(\Omega\cap B_{r_j}(x_0))\times(0,\omega_j^2G(\omega_j/r_j))$, with $x_0\in\overline\Omega$. This leads to 
\begin{equation}\label{boundary.modulus.initial}
|u_\eps(x,t)-u_\eps(x_0,0)|\leq  \omega_u\big(\dist_{{\rm par},G}((x,t),(x_0,0))\big). 
\end{equation}
for every $(x,t)\in\Omega_T$. Moreover, opportune statements similar to above still hold in the case $\psi$ is H\"older continuous.

\vs

At this point we call $\widetilde R$ the smallest value of $R_0$ coming from the three different cases, ultimately a constant depending on $\data$ and $\partial\Omega$. Choose $r=\widetilde R/2$. Now $r$ is a constant depending only on $\data$ and $\partial\Omega$. By combining the boundary estimates \eqref{boundary.modulus} and \eqref{boundary.modulus.initial} with the interior estimate in Remark~\ref{rem.2} we obtain 
\begin{equation}\label{global.modulus}
 |u_\eps(x,t)-u_\eps(y,s)|\leq \omega_u\big(\dist_{{\rm par},G}((x,t),(y,s))\big)
\end{equation}
for every $(x,t),(y,s)\in\overline{\Omega_T}^p$, where $\omega_u$ depends on $\data,\epsilon,c_\ell,\omega_\psi,\|\psi\|_{L^\infty},\partial\Omega$. Indeed, if one of the points is in $\partial_p\Omega_T$, then \eqref{global.modulus} is either \eqref{boundary.modulus} or \eqref{boundary.modulus.initial}. In the case where both $(x,t),(y,s)\in\Omega_T$ we consider two different cases. Either the mutual distance of $(x,t)$ and $(y,s)$ is small compared to their distance to the boundary, in which case we use the interior estimate, or otherwise we can again use the boundary estimates.

Let us make this rigorous. Denote $z=(x,t),w=(y,s)$. If $\dist_{{\rm par},G}(z,w)\geq 1$, we are done by the maximum principle. Note now that if $\dist_{{\rm par},G}(z,w)\leq 1$, we have 
\[
 \dist_{{\rm par},G}(z,w)^{\max\{1,g_1/2\}}\leq\dist_{{\rm par}}(z,w)\leq \dist_{{\rm par},G}(z,w)^{\min\{1,g_0/2\}}.
\]
Observe that \eqref{improved.est.interior} can be written in terms of the parabolic $G$-distance as follows:
\[
 |u_\eps(z)-u_\eps(w)|\leq c\,\big[d^G_{z,w}\big]^{-\gamma\max\{1,g_1/2\}}\,\dist_{{\rm par},G}(z,w)^{\min\{1,g_0/2\}},
\]
where 
\[
 d^G_{z,w}:=\min\big\{\dist_{{\rm par},G}(z,\partial_p\Omega_T),\dist_{{\rm par},G}(w,\partial_p\Omega_T),1\big\}.
\]
If now $\dist_{{\rm par},G}(z,w)\leq \big[d^G_{z,w}\big]^{2\gamma\max\{1,g_1/2\}}$, that is, the mutual distance of $z$ and $w$ is small compared to their distance to the boundary, then we have 
\[
 |u_\eps(z)-u_\eps(w)|\leq c\,\dist_{{\rm par},G}(z,w)^{\min\{1,g_0-1\}/2}.
\]
On the other hand, when $\dist_{{\rm par},G}(z,w) > \big[d^G_{z,w}\big]^{2\gamma\max\{1,g_1/2\}}$ there exists a cylinder $Q_\rho^G(x_0,t_0)\ni z,w$ with $\rho=2\dist_{{\rm par},G}(z,w)^{\min\{1/2,1/g_1\}/\gamma}$ such that either $x_0\in\partial\Omega$ or the bottom of $Q_\rho^G(x_0,t_0)$ touches the initial boundary. Now using triangle inequality and the boundary estimates yields 
\[
 |u_\eps(z)-u_\eps(w)|\leq 2\,\omega_u(\rho) \leq 4\,\omega_u\big(\dist_{{\rm par},G}(z,w)^{\min\{1/2,1/g_1\}/\gamma}\big).
\]
Finally, we take the largest modulus of continuity $\omega_u$ for which all the conditions proved above are satisfied, and this proves \eqref{global.modulus}.  The proof in the H\"older case is similar, since in this case we can quantify all the moduli.

\section{Conclusion}\label{conclusion}

Call $u_j:=u_\eps$ for $\eps=1/j$, $j\in\N$, and similarly $\A_j, g_j, \phi_j$. From the results of the preceding section, that is, from the equi-boundedness of the sequence $\{u_j\}_{j\in\N}$ following from the maximum principle Corollary \ref{maximumprinciple} and the global equi-continuity coming from the results of Sections \ref{approximation} and \ref{boundary.continuity}, using Ascoli-Arzel\`a theorem we see that $u_j\to u$ uniformly in $C^0({\overline{\Omega_T}}^p)$ for some $u\in C^0({\overline{\Omega_T}}^p)$. Now all we have left to prove is that $u$ is a weak solution to \eqref{general.equation}$_1$, which follows easily from the next proposition. 

\begin{Proposition}
Let $u_j\in V^{2,G}_{\rm loc}(\Omega_T)\cap C^0({\overline{\Omega_T}}^p)$ be the solutions to \eqref{approx.eq} defined above. Suppose there exists a function $u$ such that $u_j\to u$ almost everywhere in $\Omega_T$. Then $Du_j\to Du$ almost everywhere.
\end{Proposition}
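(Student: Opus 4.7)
The plan is to combine the uniform interior Lipschitz bound of Proposition~\ref{uniform.gradient.bound} with the monotonicity formula~\eqref{monotonicity} in order to show that $\{V_g(Du_j)\}$ is Cauchy in $L^2_{\loc}(\Omega_T)$; a.e.\ convergence of $Du_j$ to $Du$ will then follow (along a subsequence) from the fact that $V_g$ is a homeomorphism of $\R^n$. First, by Proposition~\ref{uniform.gradient.bound}, on every $\mathcal K \Subset \Omega_T$ the sequence $\{|Du_j|\}$ is uniformly bounded by some $M = M(\mathcal K)$; combined with the uniform modulus of continuity from Proposition~\ref{int.unif.continuity} and the hypothesis $u_j \to u$ a.e., this upgrades to $u_j \to u$ locally uniformly. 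Hence $u$ is locally Lipschitz, admits a weak gradient in $L^\infty_{\loc}(\Omega_T)$, and, up to a subsequence, $Du_j \rightharpoonup^{\ast} Du$ in the weak-$\ast$ topology of $L^\infty_{\loc}(\Omega_T)$.

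For the core estimate I would fix a non-negative $\eta \in C_c^\infty(\Omega_T)$ and test the Steklov-averaged difference of the equations for $u_j$ and $u_k$ with $(u_j - u_k)\eta$ (exactly as in the proof of Proposition~\ref{comparisonprinciple}). Sending the Steklov parameter to $0$ and integrating by parts in time yields
\begin{align*}
 &\int_{\Omega_T} \langle \A_j(Du_j) - \A_k(Du_k), D(u_j-u_k)\rangle\,\eta \dx\dt\\
 &\quad = \tfrac12\int_{\Omega_T}(u_j-u_k)^2\,\partial_t\eta \dx\dt - \int_{\Omega_T} \langle \A_j(Du_j)-\A_k(Du_k), D\eta\rangle(u_j-u_k)\dx\dt.
\end{align*}
The right-hand side tends to $0$ as $j,k\to\infty$ because $u_j - u_k \to 0$ uniformly on $\supp \eta$, while $\A_j(Du_j)$ is uniformly bounded via~\eqref{A.bound}. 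On the left, the decomposition
\[
 \A_j(Du_j) - \A_k(Du_k) = [\A(Du_j) - \A(Du_k)] + [\A_j(Du_j) - \A(Du_j)] - [\A_k(Du_k) - \A(Du_k)]
\]
combined with the monotonicity~\eqref{monotonicity} gives the lower bound $c\int|V_g(Du_j)-V_g(Du_k)|^2\eta\dx\dt$. The two error brackets, paired with the uniformly bounded $D(u_j-u_k)$, are controlled by $\sup_{|\xi|\le M}|\A_j(\xi)-\A(\xi)|$: the convolution $\phi_{\eps_j}\ast\A$ converges uniformly to $\A$ on $\{|\xi|\le M\}$ by continuity of $\A$, and the extra perturbation $\eps_j(1+|\xi|)^{\widetilde g_1-2}\xi$ appearing in~\eqref{regularization.A} is bounded by $\eps_j(1+M)^{\widetilde g_1-2}M \to 0$. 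Hence both error terms vanish.

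Putting the pieces together, $\int|V_g(Du_j)-V_g(Du_k)|^2\eta \dx\dt \to 0$, so $\{V_g(Du_j)\}$ is Cauchy in $L^2_{\loc}(\Omega_T)$ and converges to some $\Phi$. Along a subsequence $V_g(Du_{j_\ell}) \to \Phi$ a.e., and since $V_g^{-1}$ is continuous and the sequence stays in a bounded set, $Du_{j_\ell} \to V_g^{-1}(\Phi)$ a.e. The weak-$\ast$ identification from the first step then forces $V_g^{-1}(\Phi) = Du$ a.e.\ (and a standard subsequence-of-subsequences argument promotes this to the full sequence). The main technical point is the uniform convergence $\A_j \to \A$ on compact subsets of $\R^n$: it is essential that the range of the $Du_j$'s is uniformly bounded, so that the mollification error is controlled only on a compact set; the remainder is standard Minty-type bookkeeping with Steklov averages.
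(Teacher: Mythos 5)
Your proposal is correct and follows essentially the same strategy as the paper's proof: test the difference of the two equations with $(u_j-u_k)$ times a spatial-temporal cutoff (justified by Steklov averaging), isolate the monotonicity term and bound it below by $\int|V_g(Du_j)-V_g(Du_k)|^2$ via \eqref{monotonicity}, kill the error terms using the uniform $L^\infty$ bound on $Du_j$ together with the uniform convergence of the mollified vector fields $\phi_{\eps_j}\ast\A\to\A$ on compacts and the vanishing of the $\eps_j$-perturbation, and finally pass from $L^2$-Cauchyness of $V_g(Du_j)$ to a.e.\ convergence of $Du_j$ using that $V_g$ is a homeomorphism. The only cosmetic difference is the identification of the a.e.\ limit with $Du$: you invoke weak-$\ast$ compactness in $L^\infty_{\loc}$, whereas the paper simply integrates by parts against a test function and passes to the limit with dominated convergence — both are valid and equivalent in substance.
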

\begin{proof}
 Take $\mathcal K\Subset \Omega_T$ and choose a cutoff function $\varphi\in C_c^\infty(\Omega_T)$ such that $0\leq\varphi\leq 1$, $\varphi=1$ in $\mathcal K$, and $\|\partial_t\varphi\|_{L^\infty(\Omega_T)},\|D\varphi\|_{L^\infty(\Omega_T)}\leq c$ for some $c\geq 1$ depending on $\dist(\mathcal K,\partial_p\Omega_T)$. Let $j,k\in\N$ and test the weak formulations of $u_{j}$ and $u_{k}$ with $\eta=w_{j,k}\varphi$, where $w_{j,k}:=u_{j}-u_{k}$. This choice can be justified by standard methods such as Steklov averages. By subtracting we obtain 
 \begin{align*}
  0 &= -\int_{\Omega_T} w_{j,k}\partial_t(w_{j,k}\varphi)\dx\dt+ \int_{\Omega_T} \langle \A_{j}(Du_{j})-\A_{k}(Du_{k}), D(w_{j,k}\varphi)\rangle\dx\dt\\
  &= -\frac12\int_{\Omega_T}w_{j,k}^2\partial_t\varphi\dx\dt+\int_{\Omega_T} \big\langle\A_{j}(Du_{j})-\A_{k}(Du_{k}), D\varphi\rangle w_{j,k}\dx\dt\\
  &\quad\quad +\int_{\Omega_T} \big\langle\A_{j}(Du_{j})-\A(Du_{j}), Dw_{j,k}\big\rangle\varphi\dx\dt\\
  &\quad\quad\quad +\int_{\Omega_T} \big\langle\A(Du_{j})-\A(Du_{k}), Dw_{j,k}\big\rangle\varphi\dx\dt\\
  &\quad\quad\quad\quad +\int_{\Omega_T} \big\langle\A(Du_{k})-\A_{k}(Du_{k}), Dw_{j,k}\big\rangle\varphi\dx\dt=:I+II+III+IV+V.
 \end{align*}
Since $\|Du_{j}\|_{L^\infty({\Omega_T})} \leq c$ uniformly with respect to $j$ by Proposition~\ref{uniform.gradient.bound}, we also have 
\[
\|\A_{j}(Du_{j})\|_{L^\infty({\Omega_T})}\leq c g_{j}\left(\|Du_{j}\|_{L^\infty({\Omega_T})}\right)\leq c.
\]
Thus, by the definition of $\A_j$
\begin{multline*}
|I+II+III+V|\leq c\,\|u_{j}-u_{k}\|_{L^2({\Omega_T})} +c\|(\phi_{j}\ast\A)(Du_{j})-\A(Du_{j})\|_{L^2({\Omega_T})}\\[2mm]
+c\|\A(Du_{k})-(\phi_{k}\ast\A)(Du_{k})\|_{L^2({\Omega_T})}+c\,(1/j+1/k).
\end{multline*}
The first term on the right-hand side tends to zero as $j,k\to \infty$ by Lebesgue's dominated convergence theorem and the second and third by the properties of mollifiers; the last one is obvious. On the other hand, by \eqref{monotonicity} 
\[
IV\geq c\,\int_{\mathcal K}|V_g(Du_{j})-V_g(Du_{k})|^2\dx\dt.
\]
Thus
\[
c\,\int_{\mathcal K}|V_g(Du_{j})-V_g(Du_{k})|^2\dx\dt\leq IV\leq |I+II+III+V| \to 0
\]
as $j,k\to\infty$. We have shown that the sequence $\{V_g(Du_j)\}_{j\in\N}$ is Cauchy in $L^2({\mathcal K})$ and therefore there exists a function $w\in L^2({\mathcal K})$ such that $V_g(Du_j)\to w$ in $L^2({\mathcal K})$ as $j\to\infty$. This implies that there exists a (nonrelabeled) subsequence $V_g(Du_j)$ converging to $w$ almost everywhere in ${\mathcal K}$. Now the fact that $V_g$ has a continuous inverse yields 
\[
 Du_j=V_g^{-1}(V_g(Du_j))\to V_g^{-1}(w)=:v
\]
almost everywhere in ${\mathcal K}$.

Now, since $u_j\to u$ almost everywhere in ${\mathcal K}$, we have for any $\phi\in C_c^{\infty}({\mathcal K})$ that 
\[
\int_{{\mathcal K}}uD\phi\dx\dt=\lim_{j\to\infty}\int_{{\mathcal K}}u_jD\phi\dx\dt=-\lim_{j\to\infty}\int_{{\mathcal K}}Du_j\,\phi\dx\dt
=-\int_{{\mathcal K}}v\phi\dx\dt
\]
by Lebesgue's dominated convergence theorem and the definition of weak gradient, showing that $v=Du$. Thus, we have $Du_j\to Du$ almost everywhere in ${\mathcal K}$ for any ${\mathcal K}\Subset {\Omega_T}$, which implies that $Du_j\to Du$ almost everywhere in ${\Omega_T}$. 
\end{proof}

To conclude, \eqref{loc.lipschitz} follows from \eqref{Linfty1} simply using the local almost everywhere convergence of $Du_\eps$, and \eqref{exp.continuity} follows from \eqref{global.modulus} using the global uniform convergence of $u_\eps$.

\subsection{Weakening the assumptions}\label{weak.assumptions}
As mentioned in Remark \ref{more.general.conditions}, in this paragraph we show how to modify the proofs of the paper in order to obtain Theorems \ref{Ex} and \ref{Lip} for vector fields satisfying the weaker assumptions \eqref{assumptionsO.weak}.

We observe that assumptions \eqref{assumptionsO} are only used in order to have the analogous properties for the regularized vector field $\A_\eps$ defined in \eqref{regularization.A}. Moreover, \eqref{A.bound} and \eqref{A.ellipticity} trivially hold by taking $\xi_2=0$ in \eqref{assumptionsO.weak}. Thus, it suffices to show that under the assumptions \eqref{assumptionsO.weak} we still have \eqref{assumptionsO} for $\A_\eps$ with $g$ replaced by $g_\eps$ defined in \eqref{geps}.

We shall focus only on the convolution part of the vector field $\A_\eps$, since for the part involving the nondegenerate $\widetilde g_1$-Laplacian the corresponding estimates are classic and easy to verify. Therefore, we only need to prove \eqref{assumptionsO} with $\A$ replaced by $\phi_\eps\ast\A$ and $g(s)$ replaced by $\frac{g(s+\eps)}{s+\eps}s$. Using \eqref{assumptionsO.weak} we have 
\begin{align*}
 \langle D(\phi_\eps\ast\A)(\xi)\lambda,\lambda\rangle &= \lim_{h\to 0}\frac{1}{h}\int_{B_\eps(0)}\langle \A(\xi+h\lambda-\eta)-\A(\xi-\eta),\lambda\rangle\phi_\eps(\eta)\,d\eta\\
 &\geq 2^{g_0-2}\nu\int_{B_1(0)}\frac{g(|\xi-\eps\eta|)}{|\xi-\eps\eta|}\phi(\eta)\,d\eta \,|\lambda|^2
\end{align*}
and
\begin{align*}
 |D_j(\phi_\eps\ast\A)(\xi)| &= \lim_{h\to 0}\frac{1}{h}\left|\int_{B_\eps(0)}( \A(\xi+h\hat e_j-\eta)-\A(\xi-\eta))\phi_\eps(\eta)\,d\eta\right|\\
 &\leq 2^{g_1-2}L\int_{B_1(0)}\frac{g(|\xi-\eps\eta|)}{|\xi-\eps\eta|}\phi(\eta)\,d\eta.
\end{align*}
Hence, if we can show that 
\begin{equation}\label{approx}
 \int_{B_1(0)}\frac{g(|\xi-\eps\eta|)}{|\xi-\eps\eta|}\phi(\eta)\,d\eta \approx \frac{g(|\xi|+\eps)}{|\xi|+\eps}
\end{equation}
independently of $\eps$, we are done. 

Consider first the case $|\xi|\geq 2\eps$. This implies $|\xi-\eps\eta|\geq \frac{1}{3}(|\xi|+\eps)$,  and thus $|\xi-\eps\eta| \approx |\xi|+\eps$ so that \eqref{approx} holds. On the other hand, if $|\xi|<2\eps$, we have 
\begin{align*}
 \int_{B_1(0)}\frac{g(|\xi-\eps\eta|)}{|\xi-\eps\eta|}\phi(\eta)\,d\eta &\leq \sup_{B_1(0)}\phi\int_{B_3(\xi/\eps)}\frac{1}{|\xi/\eps-\eta|}\,d\eta \, \frac{g(|\xi|+\eps)}{\eps}\\
 &\leq c(n)\sup_{B_1(0)}\phi\,\frac{g(|\xi|+\eps)}{|\xi|+\eps}
\end{align*}
and 
\begin{align*}
 \int_{B_1(0)}\frac{g(|\xi-\eps\eta|)}{|\xi-\eps\eta|}\phi(\eta)\,d\eta &\geq \int_{B_{1/2}(0)\setminus B_{1/4}(\xi/\eps)}\frac{g(|\xi-\eps\eta|)}{|\xi-\eps\eta|}\phi(\eta)\,d\eta\\
 &\geq \inf_{B_{1/2}(0)}\phi \,|B_{1/2}(0)\setminus B_{1/4}(\xi/\eps)|\,\frac{g(\eps/4)}{|\xi|+\eps}\\
 &\geq c(n,g_1)\inf_{B_{1/2}(0)}\phi \,\frac{g(|\xi|+\eps)}{|\xi|+\eps}.
\end{align*}
Note that we can assume without loss of generality that $\sup_{B_1(0)}\phi\leq c$ and $\inf_{B_{1/2}(0)}\phi\geq 1/c$ for some $c\equiv c(n)>0$.

\vs

{\bf Acknowledgments}.
P.B. has been supported by the Academy of Finland and  the Gruppo Nazionale per l'Analisi Matematica, la Probabilit\`a e le loro Applicazioni (GNAMPA) of the Istituto Nazionale di Alta Matematica (INdAM). C.L. is supported by the Vilho, Yrj\"o and Kalle V\"ais\"al\"a Foundation. Part of the paper was conceived and written while the first author was visiting the Department of Mathematics and Systems Analysis of Aalto University, whose warm and friendly hospitality is gratefully acknowledged. He also acknowledges the support and the hospitality of the FIM at ETH Z\"urich in the Spring 2015, where part of the paper has been written. Finally, the authors would like to thank prof. P. Lindqvist for pointing out us some useful references, such as \cite{Bers, Dong}.

\end{document}